\newtheorem{theorem}{Theorem}[section]
\newtheorem{proposition}[theorem]{Proposition} 
\newtheorem{corollary}[theorem]{Corollary}
\newtheorem{lemma}[theorem]{Lemma}
\newtheorem{remark}[theorem]{Remark}
\newtheorem{definition}[theorem]{Definition}
\numberwithin{equation}{section}
\begin{document}
\title[Non-tracial amalgamated free products]{Some analysis on amalgamated free products of \\ von Neumann algebras in non-tracial setting
}
\author[Y. Ueda]
{Yoshimichi UEDA}
\address{
Graduate School of Mathematics, 
Kyushu University, 
Fukuoka, 810-8560, Japan
}
\email{ueda@math.kyushu-u.ac.jp}

\maketitle

\begin{abstract} 
Several techniques together with some partial answers are given to the questions of factoriality, type classification and fullness for amalgamated free product von Neumann algebras. 
\end{abstract}  

\allowdisplaybreaks{

\section{Introduction} It was quite recent that the complete answers were given in \cite{Ueda:AdvMath11,Ueda:MRL} to the questions of factoriality, type classification, fullness and $\mathrm{Sd}$- and $\tau$-invariants for arbitrary free product von Neumann algebras. It is natural as a next project to consider the same questions for more general amalgamated free product von Neumann algebras. Such attempts were already made by us \cite{Ueda:PacificJMath99, Ueda:ASPM04, Ueda:TAMS03} almost 10 years ago for amalgamated free products over Cartan subalgebras. However the results there are far from satisfactory as compared to those on plain free product von Neumann algebras. The aim of this paper is to take a still very first step towards `satisfactory' answers to those questions for amalgamated free product von Neumann algebras. As simple consequences we will give partial answers at least when amalgamated free products are taken over type I von Neumann algebras, which are improvements of our previous works \cite{Ueda:PacificJMath99, Ueda:ASPM04, Ueda:TAMS03, Ueda:JFA05, Ueda:IllinoisJMath08}.  

The proofs in \cite{Ueda:AdvMath11,Ueda:MRL} are divided into analytical and combinatorial parts in essence. Combinatorial parts are completed by some `induction arguments', whose essential idea originates in several works due to Dykema, especially \cite{Dykema:DukeMathJ93}. On the other hand, analytical parts are devoted to proving several inequalities involving the Hilbert space norms arising from some states of particular form (instead of so-called free product states themselves), whose essential ideas apparently go back to the ICC argument for factoriality of group von Neumann algebras and the so-called $14\,\varepsilon$-argument both due to Murray and von Neumann. However our problems are of the nature of type III von Neumann algebras, and thus the lack of trace causes main difficulties. Hence the key is to overcome such difficulties. Here we will take up such analytical aspects in the general amalgamated free product setup, and indeed improve the analytical results in \cite{Ueda:AdvMath11,Ueda:MRL} with new techniques from the recent amazing development on type II$_1$ factors opened by several breakthroughs due to Popa. We hope that the technical facts provided in this paper are sufficient as analytical parts in future `best-possible' answers to the questions mentioned above at least in the case where amalgamated free products are taken over type I von Neumann subalgebras. 

The organization of this paper is as follows. Section 2 is preliminaries on amalgamated free product von Neumann algebras. In section 3 we provide a non-tracial version of one of the results in Ioana--Peterson--Popa's article \cite[Theorem 1.1]{IoanaPetersonPopa:Acta08}. In relation to it we provide a non-tracial adaptation of the so-called intertwining-by-bimidule criterion due to Popa, which may be of independent interest as future reference. In the same section we also generalize our previous results of controlling central sequences \cite[Proposition 3.5]{Ueda:AdvMath11},\cite[Proposition 3.1]{Ueda:MRL} to the amalgamated free product setting. In section 4, we give several partial answers to the questions mentioned above by utilizing technologies developed in \S3. Those include an answer to the factoriality and non-amenability questions of a given amalgamated free product $(M,E) = (M_1,E_1)\star_N (M_2,E_2)$ when $M_1$ is `diffuse relative to $N$', $M_2$ `non-trivial relative to $N$', and $N$ of type I.                    

Standard notation rule here follows our previous papers \cite{Ueda:AdvMath11, Ueda:MRL}; for example, the center, the unitary group and the set of projections of a given von Neumann algebra $M$ are denoted by $\mathcal{Z}(M)$, $M^u$ and $M^p$, respectively, and also the central support of $e \in M^p$ in $M$ by $c_e^M$. Notations and facts concerning amalgamated free products of von Neumann algebras will be summarized in next section 2.   

\section{Amalgamated Free Product von Neumann Algebras}

Let $M_1 \supseteq N \subseteq M_2$ be $\sigma$-finite von Neumann algebras, and faithful normal conditional expectations $E_1 : M_1 \rightarrow N$, $E_2 : M_2 \rightarrow N$ be given. Their amalgamated free product $(M,E) = (M_1,E_1)\star_N(M_2,E_2)$ is a pair of von Neumann algebra $M$ containing $M_1 \supseteq N \subseteq M_2$ and faithful normal conditional expectation $E : M \rightarrow N$ satisfying (i) $M = M_1\vee M_2$, (ii) $E\!\upharpoonright_{M_k} = E_k$ ($k=1,2$) and (iii) $E\!\upharpoonright_{\Lambda^\circ(M_1^\circ,M_2^\circ)} \equiv 0$, where $\Lambda^\circ(M_1^\circ,M_2^\circ)$ denotes the set of all alternating words in $M_1^\circ := \mathrm{Ker}(E_1)$ and $M_2^\circ := \mathrm{Ker}(E_2)$. The construction of such a pair is a bit complicated, but this simple formulation perfectly serves as a working definition. The construction was  introduced in the tracial setting in \cite{Popa:InventMath93} based on the $C^*$-algebraic one \cite{Voiculescu:LNM1132}. Its modular theoretical treatment was given in \cite{Ueda:PacificJMath99}, and will be reviewed below.  

Let $\chi$ be a faithful normal semifinite weight on $N$. Then the modular automorphism $\sigma_t^{\chi\circ E}$, $t \in \mathbb{R}$, is simply computed as 
\begin{equation}\label{Eq-2.1} \sigma_t^{\chi\circ E}\!\upharpoonright_{M_k} = \sigma_t^{\chi\circ E_k} \quad \quad  (k=1,2),\end{equation} 
see \cite[Theorem 2.6]{Ueda:PacificJMath99}. This formula together with famous Takesaki's criterion shows that for each $k=1,2$ there is a unique faithful normal conditional expectation $E_{M_k} : M \rightarrow M_k$ characterized by 
\begin{equation}\label{Eq-2.2}
E_{M_k}\!\upharpoonright_{\Lambda^\circ(M_1^\circ,M_2^\circ)\setminus M_k^\circ} \equiv 0. 
\end{equation} 
This fact is easily confirmed in the exactly same way as in \cite[Lemma 2.1]{Ueda:AdvMath11}. It is clear that $E\circ E_{M_k} = E$ holds. Consider the natural inclusion of the so-called continuous cores:   
\begin{equation}\label{Eq-2.3} 
\widetilde{M} := M\rtimes_{\sigma^{\chi\circ E}}\mathbb{R}\ \supseteq\ \widetilde{M}_k := M_k\rtimes_{\sigma^{\chi\circ E_k}}\mathbb{R}\ (k=1,2)\ \supseteq \widetilde{N} := N\rtimes_{\sigma^\chi}\mathbb{R},
\end{equation}     
which is independent of the choice of $\chi$ thanks to Connes's Radon-Nikodym cocycle theorem. The canonical liftings (still being faithful normal conditional expectations) $\widetilde{E} : \widetilde{M} \rightarrow \widetilde{N}$, $\widetilde{E}_k : \widetilde{M}_k \rightarrow \widetilde{N}$ ($k=1,2$) are constructed by 
\begin{equation}\label{Eq-2.4}
\widetilde{E} := E\bar{\otimes}\mathrm{Id}_{B(L^2(\mathbb{R}))}\!\upharpoonright_{M\rtimes_{\sigma^{\chi\circ E}}\mathbb{R}}, \quad\quad \widetilde{E}_k := E_k\bar{\otimes}\mathrm{Id}_{B(L^2(\mathbb{R}))}\!\upharpoonright_{M_k\rtimes_{\sigma^{\chi\circ E_k}}\mathbb{R}}.
\end{equation} 
Remark that the original $E$ and $E_k$ are recovered as the restrictions of $\widetilde{E}$ and $\widetilde{E}_k$ to $M$ and $M_k$ via the canonical embeddings $M \hookrightarrow \widetilde{M}$ and $M_k \hookrightarrow \widetilde{M}_k$, respectively.  Here is a simple but important fact \cite[Theorem 5.1]{Ueda:PacificJMath99} that $\widetilde{M}_1$ and $\widetilde{M}_2$ are freely independent with amalgamation over $\widetilde{N}$ with respect to $\widetilde{E}$, and moreover $\widetilde{M} = \widetilde{M}_1 \vee \widetilde{M}_2$. Consequently the following natural formula holds:
\begin{equation}\label{Eq-2.5}
(\widetilde{M},\widetilde{E}) = (\widetilde{M}_1,\widetilde{E}_1)\star_{\widetilde{N}}(\widetilde{M}_2,\widetilde{E}_2). 
\end{equation}
The canonical faithful normal semifinite traces $\mathrm{Tr}_{\widetilde{M}}$, $\mathrm{Tr}_{\widetilde{M}_k}$ ($k=1,2$) and $\mathrm{Tr}_{\widetilde{N}}$ on $\widetilde{M}$, $\widetilde{M}_k$ and $\widetilde{N}$, respectively, (see \cite[Theorem XII.1.1]{Takesaki:Book2}) must satisfy $\mathrm{Tr}_{\widetilde{M}}=\mathrm{Tr}_{\widetilde{N}}\circ\widetilde{E}$ and $\mathrm{Tr}_{\widetilde{M}_k}=\mathrm{Tr}_{\widetilde{N}}\circ\widetilde{E}_k$ (see e.g.~\cite[\S4]{Longo:CMP89}).  

Let $M^\omega \supseteq M_k^\omega\ (k=1,2)\ \supseteq N^\omega$ be the ultraproducts of $M \supseteq M_k\ (k=1,2)\ \supseteq N$. Here the inclusion relation is guaranteed by the existence of conditional expectations, and $E$ and $E_k$ ($k=1,2$) can be lifted up to $E^\omega : M^\omega \rightarrow N^\omega$ and $E_k^\omega : M_k^\omega \rightarrow N^\omega$, respectively. All the necessary facts on ultraproducts of von Neumann algebras are summarized in \cite[\S\S2.2]{Ueda:AdvMath11}. Remark that $M_1^\omega$ and $M_2^\omega$ are freely independent with amalgamation over $N^\omega$ with respect to $E^\omega$, see \cite[Proposition 4]{Ueda:TAMS03}. However it is hopeless due to \cite[Lemma 2.2]{Popa:IMRN07} that $M^\omega = M_1^\omega\vee M_2^\omega$ holds.       

\section{Technical Results} 

\subsection{A non-tracial adaptation of Popa's intertwining-by-bimodule criterion} Let $M$ be an arbitrary $\sigma$-finite (possibly type III) von Neumann algebra, and $A, B$ be its (possibly non-unital) von Neumann subalgebras with units $1_A, 1_B$, respectively. Suppose that $B$ is semifinite with a faithful normal semifinite trace $\mathrm{Tr}_B$ and furthermore that there is a faithful normal conditional expectation $E_B : 1_B M 1_B \rightarrow B$. 

\begin{proposition}\label{P-3.1} The following are equivalent{\rm:} 
\begin{itemize}
\item[(i)] There is no net $u_\lambda$ of unitaries in $A$ which satisfies $E_B(y^* u_\lambda x) \longrightarrow 0$ $\sigma$-strongly for any $x, y \in \bigcup\big\{ 1_A M p \,|\,p \in B^p; \mathrm{Tr}_N(p)< + \infty\}$. 
\item[(ii)] There are a normal {\rm(}possibly non-unital{\rm)} $*$-homomorphism $\rho : A \rightarrow M_n(\mathbb{C})\bar{\otimes}B$ with finite $n \in \mathbb{N}$ and a non-zero partial isometry $w \in M_n(\mathbb{C})\bar{\otimes}M$ such that 
\begin{itemize} 
\item $(\mathrm{Tr}_n\bar{\otimes}\mathrm{Tr}_B)(\rho(1_A)) < +\infty$, 
\item $ww^* \leq e_{11}\otimes 1_A$ and $w^* w \leq \rho(1_A)$, and 
\item $(e_{11}\otimes a)w = w\rho(a)$ for all $a \in A$. 
\end{itemize}  
\item[(iii)] There are non-zero projections $e \in A$, $f \in B$, a normal unital $*$-isomorphism $\theta : eAe \rightarrow fBf$ and a non-zero partial isometry $v \in M$ such that 
\begin{itemize} 
\item the central support $c_e^A$ is finite in $A$ and $\mathrm{Tr}_B(f) < +\infty$, 
\item $vv^* \leq e$ and $v^* v \leq f$, and 
\item $xv = v\theta(x)$ for all $x \in eAe$. 
\end{itemize} 
\end{itemize}
Suppose further that $M$ has an almost periodic weight $\psi$ such that both $A$ and $B$ sit inside the centralizer $M_\psi$, $\psi\!\upharpoonright_B$ is still semifinite, and the $E_B$ is the unique $\psi\!\upharpoonright_{1_B M 1_B}$-preserving one. Then the $w$ in {\rm(ii)} and the $v$ in {\rm(iii)} can be chosen in such a way that there is a common eigenvalue $\lambda$ of $\Delta_\psi$ so that $(\mathrm{id}_n\,\bar{\otimes}\,\sigma_t^\psi)(w) = \lambda^{it}w$ and $\sigma_t^\psi(v) = \lambda^{it}v$ for all $t \in \mathbb{R}$.
\end{proposition}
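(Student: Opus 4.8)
The plan is to prove the cycle of equivalences (i) $\Rightarrow$ (iii) $\Rightarrow$ (ii) $\Rightarrow$ (i) by adapting Popa's classical intertwining argument to the non-tracial setting, the crucial new input being that the ambient normality of $E_B$ with respect to the trace $\mathrm{Tr}_B$ on $B$ lets us reproduce the Hilbert--Schmidt geometry that drives the tracial proof. First I would set up the right Hilbert space: for a finite-trace projection $p \in B^p$, the subspace $\overline{1_A M p}$ inside $L^2(M,\psi)$ (or, absent $\psi$, inside the standard form built from a suitable weight) carries a natural $1_A M$--$pBp$ bimodule structure, and $E_B$ provides the $pBp$-valued inner product making $L^2(1_A M p)$ a right Hilbert $pBp$-module with $\mathrm{Tr}_B$-finite ``dimension'' on finite-trace cutdowns. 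The negation of (i) furnishes a net $u_\lambda$ with $\|E_B(y^* u_\lambda x)\|_2 \to 0$ for all $x,y$ ranging over the stated cutdowns; the standard averaging trick --- consider the weak limit point of the convex set generated by $\{(e_{11}\otimes u_\lambda)\xi (e_{11}\otimes u_\lambda)^*\}$ acting on a fixed finite-trace vector --- produces a nonzero element $a$ of the von Neumann algebra of bounded right-$B$-modular operators that is fixed by the $A$-action, i.e. a nonzero element of $\langle M, e_B\rangle \cap A'$ with finite trace. Polar decomposition and cutting down then give the projections $e \in A$, $f \in B$ and the isomorphism $\theta: eAe \to fBf$ together with the intertwiner $v$, which is exactly (iii). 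The implications (iii) $\Rightarrow$ (ii) is the routine amplification (pad $f$ by a matrix unit to absorb the central support of $e$ and extend $\theta$ to a possibly non-unital $*$-homomorphism $\rho$ into $M_n(\mathbb{C})\bar\otimes B$), and (ii) $\Rightarrow$ (i) is the usual contradiction argument: if such $\rho$ and $w$ existed while a net $u_\lambda$ as in (i) were available, then $(e_{11}\otimes u_\lambda) w = w\,\rho(u_\lambda)$ forces $\|w\|_2^2 = \|(\mathrm{id}_n\bar\otimes E_B)((e_{11}\otimes u_\lambda^*) w w^* (e_{11}\otimes u_\lambda))\|$-type quantities to vanish, contradicting $w \neq 0$; here one expands $w w^*$ in terms of the finite-trace cutdowns to invoke the hypothesis of (i).

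For the final ``eigenvalue'' refinement, I would exploit that when $A, B \subseteq M_\psi$ the modular automorphism group $\sigma_t^\psi$ acts on the whole picture: it fixes $A$ and $B$ pointwise, commutes with $E_B$ by the uniqueness hypothesis, and therefore acts on $L^2(1_A M p)$ unitarily, implementing $\sigma_t^\psi$ on the bounded right-modular operators. The nonzero $A$-central element $a$ obtained above can thus be spectrally decomposed against this action: since $a \in A' \cap \langle M, e_B\rangle$ and the $\sigma_t^\psi$-flow is an almost periodic one-parameter automorphism group (because $\psi$ is almost periodic, $\Delta_\psi$ has pure point spectrum), the $\sigma_t^\psi$-eigenspaces of the relevant operator are $\sigma_t^\psi$-invariant and at least one is nonzero; replacing $a$ by its component in a single eigenspace $\{x : \sigma_t^\psi(x) = \lambda^{it} x\}$ keeps it nonzero and $A$-central, and then the polar parts $v$ and the amplified $w$ inherit the homogeneity $\sigma_t^\psi(v) = \lambda^{it} v$, $(\mathrm{id}_n\bar\otimes\sigma_t^\psi)(w) = \lambda^{it} w$ by uniqueness of polar decomposition.

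The main obstacle I anticipate is the averaging/compactness step in (i) $\Rightarrow$ (iii) in the genuinely non-tracial case: without a trace on $M$ one cannot directly take weak-$*$ limits of $\{(e_{11}\otimes u_\lambda)\,e_B\,(e_{11}\otimes u_\lambda)^*\}$ in $\langle M, e_B\rangle$ and hope the limit lands in the trace-finite part with a nonzero $A$-fixed component. The fix is to work not with the jump projection $e_B$ itself but with the compact (trace-class) operators coming from the finite-trace cutdowns $p \in B^p$: the hypotheses of (i) are precisely stated in terms of such cutdowns, so one builds the averaged element from a vector state associated to a fixed finite-trace vector $\hat p$, extracts a weak limit in the trace-class operators on $L^2(1_A M p)$ (where boundedness in trace norm does give relative compactness), checks it is nonzero using $\|E_B(p\,u_\lambda\,p)\|$ staying bounded below along a subnet (which is where the negation of (i) is really used), and only afterwards passes back to $\langle M, e_B\rangle$. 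Handling the non-unital bookkeeping ($1_A$, $1_B$, central supports) and the passage between the different finite-trace cutdowns simultaneously is the place where one must be careful, but each individual verification is routine once the Hilbert-module framework is in place.
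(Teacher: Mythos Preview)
Your overall architecture is close to the paper's, and the averaging step together with the almost-periodic refinement are essentially the right ideas. Two substantive points deserve attention.

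First, the paper does \emph{not} go (i) $\Rightarrow$ (iii) directly; it proves (i) $\Rightarrow$ (ii) and then (ii) $\Rightarrow$ (iii), and the latter is not the routine step you suggest. From the averaging you obtain a nonzero finite-trace projection in $A'\cap\langle M,\hat B\rangle$ and hence a finitely generated $A$--$B$ bimodule; what comes out naturally is a homomorphism $\rho:A\to M_n(\mathbb{C})\bar\otimes B$ and a partial isometry $w$, i.e.\ (ii). Extracting from this an \emph{isomorphism} $\theta:eAe\to fBf$ with $c_e^A$ finite in $A$ requires a genuine case analysis: one first observes that $A$ splits as $A_0\oplus\mathrm{Ker}(\rho(-)w^*w)$ with $A_0$ finite (since $\rho(A)$ lands in a finite algebra), and then treats the type II$_1$ and type I summands of $A_0$ separately, using center-valued traces in the first case and diagonalization of MASAs in the second. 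Your phrase ``polar decomposition and cutting down then give the projections $e\in A$, $f\in B$ and the isomorphism $\theta$'' skips all of this; without it you will not produce the isomorphism or the finiteness of $c_e^A$.

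Second, in the passage from the $A$-central bimodule to the intertwining partial isometry, the non-tracial setting forces a specific technical device you do not name: polar decomposition \emph{in the standard form} of $M_{n+1}(\mathbb{C})\bar\otimes M$. One assembles the bounded vectors $\xi_i$ into a single vector $\hat\xi$ in $L^2(\mathbb{M})$, checks $\hat\rho(a)\hat\xi=J_{\mathbb{M}}\hat\rho(a)^*J_{\mathbb{M}}\hat\xi$, and then invokes the standard-form polar decomposition $\hat\xi=\hat w\,|\hat\xi|$ with $|\hat\xi|\in\mathfrak{P}^\natural_{\mathbb{M}}$ to produce $\hat w\in\mathbb{M}$ satisfying $\hat\rho(a)\hat w=\hat w\hat\rho(a)$. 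In the tracial case the partial isometry comes from polar decomposition of an element of $M$; here it must come from polar decomposition of a vector, and this is exactly what makes the almost-periodic refinement work as well: once $\hat\xi$ is arranged to lie in a single $\Delta_\psi$-eigenspace, the uniqueness of the standard-form polar decomposition forces $\hat w$ (hence $w$ and $v$) to be $\sigma^\psi$-homogeneous. Your description of the averaging ``fix'' via trace-class operators on cutdowns is workable in spirit, but the paper's implementation---embedding the standard Hilbert space into $\ell^2(I)\otimes L^2(\hat B)$ to get a concrete trace on $\langle M,\hat B\rangle$, then taking the minimal-norm point of the $L^2$-closed convex hull of $\{u^*du:u\in A^u\}$---is what makes the finite-trace bookkeeping transparent.

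A minor point: you repeatedly write ``the negation of (i)'' where you mean (i) itself; (i) is already the statement that no such net exists, and it is (i) that gives the uniform lower bound feeding the averaging.
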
 

As usual let us write $A \preceq_M B$ (with $E_B$ and $\mathrm{Tr}_B$) if the above equivalent conditions (i)--(iii) hold. Remark that no assumption on $A$ is necessary. The proof is of course modeled after Popa's original one for finite von Neumann algebras, but some cares are necessary. Indeed we observed this fact with $B$ finite several years ago, through our attempt to get better understanding of the fundamental articles \cite{Popa:AnnMath06, Popa:InventMath06} due to Popa. Houdayer and Vaes informed us that they have also observed it with $B$ finite independently (see \cite[Theorem 2.3]{HoudayerVaes:Preprint12}), and moreover Vaes corrected our misunderstanding on some argument in \cite[\S2]{ChifanHoudayer:DukeMathJ10}. The proof below is just a combination and/or a reformulation of several existing proofs of Popa's criterion \cite[Appendix]{Popa:AnnMath06},\cite[\S2]{Popa:InventMath06} (also see \cite[Appendix F]{BrownOzawa:Book},\cite[Appendix C]{Vaes:Asterisque07} for its exposition) and its variants \cite[\S3]{Asher:PAMS09},\cite[\S2]{ChifanHoudayer:DukeMathJ10},\cite[\S4]{Houdayer:Crelle09}, etc. The same idea as in e.g.~the proof of (1) $\Rightarrow$ (4) in \cite[Proposition C.1]{Vaes:Asterisque07} perfectly works for (ii) $\Rightarrow$ (i). (Note that the proof of (4) $\Rightarrow$ (1) in \cite[Theorem F.12]{BrownOzawa:Book} does not work at this point due to the lack of finite trace. Thus we could not prove (iii) $\Rightarrow$ (i) directly.) Hence the main parts below are (ii) $\Leftrightarrow$ (iii) and (i) $\Rightarrow$ (ii).

\medskip
{\it Proof of {\rm(ii)} $\Rightarrow$ {\rm(i):}} We may assume that $\rho(1_A)=\sum_{k=1}^n e_{kk}\otimes p_k$ with $p_k \in B^p$ thanks to \cite[Corollary 3.20]{Kadison:AmerJMath84}. Since $(\mathrm{Tr}_n\bar{\otimes}\mathrm{Tr}_B)(\rho(1_A)) < +\infty$, one has $w = \sum_{k=1}^n e_{1k}\otimes w_k$ with $w_k = w_k p_k \in \bigcup\big\{ 1_A M p \,|\,p \in B^p; \mathrm{Tr}_N(p)< + \infty\}$. On contrary, suppose that (i) is not true. One can find a net $u_\lambda$ in $A^u$ in such a way that $E_B(w_i^* u_\lambda w_j) \longrightarrow 0$ $\sigma$-strongly for all $i,j$, and hence $\rho(u_\lambda)(\mathrm{id}\bar{\otimes} E_B)(w^* w) = \sum_{i,j=1}^n e_{ij}\otimes E_B(w_i^* u_\lambda w_j) \longrightarrow 0$ $\sigma$-strongly. Therefore, $\Vert(\mathrm{id}_n\bar{\otimes} E_B)(w^* w)\Vert_{\mathrm{Tr}_n\bar{\otimes}\mathrm{Tr}_B} = \Vert\rho(u_\lambda)(\mathrm{id}_n\bar{\otimes} E_B)(w^* w)\rho(1_A)\Vert_{\mathrm{Tr}_n\bar{\otimes}\mathrm{Tr}_B} \longrightarrow 0$, a contradiction to $w\neq 0$. \qed  
  
\medskip
{\it Proof of {\rm(iii)} $\Rightarrow$ {\rm(ii):}} Since $v^* v \in \theta(eAe)'$, one can find a non-zero $z \in \mathcal{Z}(eAe)^p = (\mathcal{Z}(A)e)^p$ in such a way that the normal $*$-homomorphism $x \in zAz = (eAe)z \mapsto \theta(x)v^* v$ is injective. Since $c_e^A$ is finite in $A$, by \cite[Proposition 8.2.1]{KadisonRingrose:Book2} one can find non-zero, mutually orthogonal and equivalent (in $A$) $e_1,\dots,e_n \in A^p$ in such a way that $e_1 \leq z$ and $\sum_{k=1}^n e_k = c_{e_1}^A$. We have $e_1 vv^* \neq 0$, since $\theta(e_1)v^* v = v^* (e_1 vv^*)v$ by the choice of $z$ and $e_1 \leq z$. Then one gets partial isometries $v_1 := e_1, v_2,\dots,v_n \in A$ so that $v^*_k v_k = e_1$ and $v_k v_k^* = e_k$ ($k=2,\dots,n$). Since $e_1 Ae_1 \subseteq eAe$, we can construct a normal $*$-homomorphism $\rho : A \rightarrow M_n(\mathbb{C})\bar{\otimes}B$ by $\rho(a) := \sum_{i,j=1}^n e_{ij}\otimes\theta(v_i^* a v_j)$, $a \in A$. Set $w := \sum_{k=1}^n e_{1k}\otimes v_k v$ with $v$ in (iii), which defines a non-zero partial isometry, since $v^* v_i^* v_j v = \delta_{ij} v^* e_1 v$ and $v^* e_1 v = \theta(e_1)v^* v \neq 0$ as remarked before. Since $\sum_{i=1}^n v_i v_i^* = c_{e_1}^A = c_{e_k}^A$ for all $k=2,\dots,n$, we have $w\rho(a) = \sum_{i,j,k=1}^n e_{1i}e_{jk}\otimes v_i v\theta(v_j^* a v_k) = \sum_{i,k=1}^n e_{1i}e_{ik}\otimes v_i v_i^* a v_k v = \sum_{k=1}^n e_{1k}\otimes c_{e_k}^A a v_k v = (e_{11}\otimes a)w$ for all $a \in A$. Since $\rho(1_A) \leq 1_n\otimes f$, one has $(\mathrm{Tr}_n\bar{\otimes}\mathrm{Tr}_B)(\rho(1_A)) < +\infty$. \qed  

\medskip
{\it Proof of {\rm(ii)} $\Rightarrow$ {\rm(iii):}} As in (ii) $\Rightarrow$ (i) we may and do assume that $\rho(1_A) = \sum_{k=1}^n e_{kk}\otimes p_k$ with $\mathrm{Tr}_B$-finite $p_k \in B^p$. Note that any union of finite number of $\mathrm{Tr}_B$-finite projections is again $\mathrm{Tr}_B$-finite thanks to the Kaplansky formula \cite[Theorem 6.1.7]{KadisonRingrose:Book2}. Thus $p=\bigvee_{k=1}^n p_k$ is $\mathrm{Tr}_B$-finite, and replacing $B$ by $pBp$ (if necessary) we may and do assume that $\mathrm{Tr}_B(1_B) < +\infty$. Notice that $A$ must be of the form $A = A_0\oplus\mathrm{Ker}(\rho(-)w^*w)$ with $A_0$ finite, since $\rho(A)$ is finite. Note here that $w^* w \in \rho(A)'$, and thus $\rho(-)w^* w$ is a normal $*$-homomorphism.   

Let us first assume that $A_0$ has a type II$_1$ direct summand. By \cite[Lemma 6.5.6]{KadisonRingrose:Book2} one can find nonzero, mutually orthogonal and equivalent (in $A_0$) $e_1,\dots,e_n \in A_0^p$ whose sum is the unit of the type II$_1$ direct summand. With the center-valued trace $\tau : M_n(\mathbb{C})\bar{\otimes}B \rightarrow \mathbb{C}1\bar{\otimes}\mathcal{Z}(B)$ we have $n\tau(\rho(e_1)) \leq \tau(1\otimes1_B) = n\tau(e_{11}\otimes1_B)$, implying that there is a partial isometry $v_1 \in M_n(\mathbb{C})\bar{\otimes}B$ such that $v_1^* v_1 = \rho(e_1)$ and $v_1 v_1^* \leq e_{11}\otimes 1_B$. Since $v_1\rho(e_1)v_1^* = v_1 v_1^* \leq e_{11}\otimes1_B$, we can construct a normal unital $*$-isomorphism $\theta : eAe \rightarrow fBf$ with $e := e_1$, $f := \theta(e)$ in such a way that $e_{11}\otimes\theta(x) = v_1\rho(x)v_1^*$ for $x \in eAe$. Since $w^* w \in \rho(A)'\cap \rho(1_A)\big(M_n(\mathbb{C})\bar{\otimes}M\big)\rho(1_A)$ and $ww^* \in \big(\mathbb{C}e_{11}\bar{\otimes}A\big)'\cap (e_{11}\otimes1_A)\big(M_n(\mathbb{C})\bar{\otimes}M\big)(e_{11}\otimes1_A)$, it is easy to see that $wv_1^*$ is a non-zero partial isometry whose left and right support projections are less than $e_{11}\otimes e$ and $e_{11}\otimes f$, respectively, and hence $wv_1^* = e_{11}\otimes v$ for some non-zero partial isometry $v \in eMf$. Then one has $e_{11}\otimes xv = (e_{11}\otimes x)wv_1^* = w\rho(x)v_1^* = wv_1^* v_1\rho(x)v_1^* = e_{11}\otimes v\theta(x)$ for $x \in eAe$. 

We next consider the case that $A_0$ is of type I, that is, there is an abelian (in $A$) $e \in A_0^p$ with $c_e^A = 1_{A_0}$. With a MASA $\mathfrak{A}$ between $\rho(eAe)\oplus\mathbb{C}\rho(e)^\perp \subseteq M_n(\mathbb{C})\bar{\otimes}B$ one can choose, by \cite[Theorem 3.18]{Kadison:AmerJMath84}, mutually orthogonal and equivalent (in $M_n(\mathbb{C})\bar{\otimes}B$) projections $q_1,\dots,q_n$ from $\mathfrak{A}$ with $\sum_{k=1}^n q_k = 1_n\otimes1_B$. Then one immediately observes (by looking at their center-valued traces) that every $q_k$ is equivalent to $e_{11}\otimes1_B$ in $M_n(\mathbb{C})\bar{\otimes}B$. Since $\rho(e)w^*w \neq 0$, some $q:=q_k$ must satisfy $q\rho(e)w^*w \neq 0$. In this way, we can choose a non-zero partial isometry $v_1 \in M_n(\mathbb{C})\bar{\otimes}B$ in such a way $v_1^* v_1 =q\rho(e) (\leq \rho(e))$, $v_1 v_1^* \leq e_{11}\otimes 1_B$, and thus $v_1^* v_1 \in \rho(eAe)'$ and $wv_1^* \neq 0$ (since $q\rho(e)w^* w \neq 0$). Then we can construct a unital normal $*$-homomorphism $\theta : eAe \rightarrow fBf$ with $f := \theta(e)$ by $e_{11}\otimes\theta(x) = v_1\rho(x)v_1^*$ for $x \in eAe$ and a non-zero $y \in eMf$ by $e_{11}\otimes y = wv_1^*$. Moreover we have $e_{11}\otimes xy = (e_{11}\otimes x)wv_1^* = w\rho(x)v_1^* = wv_1^* (v_1\rho(x)v_1^*) = e_{11}\otimes y\theta(x)$ for $x \in eAe$, since $v_1^* v_1 \in \rho(eAe)'$. Hence $xy = y\theta(x)$ for  $x \in eAe$. Replacing $e$ by suitable $z \in \mathcal{Z}(eAe)^p$ (if necessary) we can make $\theta$ injective with keeping both $\theta(e) = f$ and $y=eyf$. With the polar decomposition $y=v|y|$ we get $vv^* \leq e$, $v^* v \leq f$ and $xv = v\theta(x)$ for $x \in eAe$. \qed

\medskip
We have two ways for completing the final part of the proof of (i) $\Rightarrow$ (ii) below; one is the use of Haagerup's $L^p$-space technologies and the other that of standard forms due to Araki, Connes and Haagerup. Here we use the latter as easy way. In what follows $(M \curvearrowright \mathcal{H}, J_M, \mathfrak{P}_M^\natural)$ denotes a standard form of $M$, see \cite[Definition IX.1.13]{Takesaki:Book2}.  

\medskip
{\it Proof of {\rm(i)} $\Rightarrow$ {\rm(ii):}} Note that $E_B(y^* u_\lambda x) \longrightarrow 0$ $\sigma$-strongly if and only if $\Vert E_B(y^* u_\lambda x)\Vert_{\mathrm{Tr}_B} \longrightarrow 0$ for any $x, y \in \bigcup\{1_A Mp\,|\,p\in B^p; \mathrm{Tr}_B(p) < +\infty\}$. Thus there are $\varepsilon>0$ and $\mathcal{F} \Subset \bigcup\{1_A Mp\,|\,p\in B^p; \mathrm{Tr}_B(p) < +\infty\}$ so that 
\begin{equation}\label{Eq-3.1} 
\sideset{}{_{x, y \in \mathcal{F}}}\sum \Vert E_B(y^* u x)\Vert_{\mathrm{Tr}_B} \geq \varepsilon \quad \text{for all $u \in A^u$}.
\end{equation} 
Each $x \in \mathcal{F}$ has a $\mathrm{Tr}_B$-finite $p_x \in B^p$ with $x = xp_x$, and $p := \bigvee_{x \in \mathcal{F}} p_x$ must be $\mathrm{Tr}_B$-finite as remarked in (ii) $\Rightarrow$ (iii). Thus, replacing $B$ by $pBp$ (if necessary) we may and do assume that $\mathrm{Tr}_B$ is a finite trace, that is, $\mathrm{Tr}_B(1_B) < +\infty$.

Choose a faithful normal state $\varphi_0$ on $1_B^\perp M1_B^\perp$, and set $\hat{B} := B\oplus\mathbb{C}1_B^\perp$ and $E_{\hat{B}} : x \in M \mapsto E_B(1_B x 1_B) + \varphi_0(1_B^\perp x1_B^\perp)1_B^\perp$ giving a faithful normal conditional expectation from the whole $M$ onto $\hat{B}$. Clearly $\hat{B}$ is still finite (since we have assumed that $\mathrm{Tr}_B$ is a finite trace), and the mapping $b+\alpha1_B^\perp \in \hat{B} \mapsto  \mathrm{Tr}_B(b)+\alpha \in \mathbb{C}$ defines a faithful normal trace (not weight !) $\mathrm{Tr}_{\hat{B}}$ on $\hat{B}$. Set $\varphi := \mathrm{Tr}_{\hat{B}}\circ E_{\hat{B}}$, a faithful normal positive linear functional on $M$, and let $\xi_0 \in \mathfrak{P}_M^\natural$ be its unique representing vector. It is standard, by a usual exhaustion argument like e.g.~the proof of \cite[Theorem IV.5.5]{Takesaki:Book1}, to see that there is a family of vectors $\{\xi_i\}_{i\in I}$ in $\mathcal{H}$ so that $\xi_0$ is in the family (thus $0$ is regarded as a distinguished element in $I$) and moreover $\mathcal{H} = \sum_{i\in I}^\oplus [J_M\hat{B}J_M\xi_i]$. Therefore, one can construct an isometry $U : \mathcal{H} \rightarrow \ell^2(I)\otimes L^2(\hat{B})$ satisfying $U\xi_0 = \delta_0\otimes\Lambda_{\mathrm{Tr}_{\hat{B}}}(1)$ and $U(J_M x^* J_M) = (1\otimes J_{\hat{B}} x^* J_{\hat{B}})U$ for $x \in \hat{B}$, where $L^2(\hat{B})$ is the usual standard Hilbert space constructed out of $\mathrm{Tr}_{\hat{B}}$, $\Lambda_{\mathrm{Tr}_{\hat{B}}}$ the canonical embedding of $\hat{B}$ to $L^2(\hat{B})$ and $J_{\hat{B}}$ the canonical unitary conjugation on $L^2(\hat{B})$. By the construction we observe that $P:=UU^* \in B(\ell^2(I))\bar{\otimes}\hat{B}$ and moreover that the pair $P\big(B(\ell^2(I))\bar{\otimes}\hat{B}\big)P$ and $P_{\mathbb{C}\delta_0}\otimes1$ with the rank $1$ projection $P_{\mathbb{C}\delta_0}$ onto $\mathbb{C}\delta_0$ is nothing but a concrete realization, modulo the unitary equivalence by $U$, of the basic extension $\langle M, \hat{B}\rangle$ and the Jones projection $e_{\hat{B}}$ associated with $E_{\hat{B}}$. Then 
\begin{equation}\label{Eq-3.2} \mathrm{Tr}_{\langle M, \hat{B}\rangle}(-) := (\mathrm{Tr}_{B(\ell^2(I))}\bar{\otimes}\mathrm{Tr}_{\hat{B}})(U(-)U^*)
\end{equation}
with the usual trace $\mathrm{Tr}_{B(\ell^2(I))}$ on $B(\ell^2(I))$ gives a faithful normal semifinite trace on the basic extension $\langle M, \hat{B}\rangle$. For $x \in \hat{B}$ one has $Uxe_{\hat{B}}U^* = P_{\mathbb{C}\delta_0}\otimes x$ and hence $\mathrm{Tr}_{\langle M, \hat{B}\rangle}(x e_{\hat{B}}) = 
(\mathrm{Tr}_{B(\ell^2(I))}\bar{\otimes}\mathrm{Tr}_{\hat{B}})(Ux e_{\hat{B}}U^*) = 
(\mathrm{Tr}_{B(\ell^2(I))}\bar{\otimes}\mathrm{Tr}_{\hat{B}})(P_{\mathbb{C}\delta_0}\otimes x) = \mathrm{Tr}_{\hat{B}}(x)$. 
Therefore, we get  
\begin{equation}\label{Eq-3.3} 
\mathrm{Tr}_{\langle M, \hat{B}\rangle}(x e_{\hat{B}} y) = \mathrm{Tr}_{\langle M, \hat{B}\rangle}(e_{\hat{B}}yxe_{\hat{B}}) = \mathrm{Tr}_{\langle M, \hat{B}\rangle}(\hat{E}_B(yx)e_{\hat{B}}) = 
\varphi(yx), \quad x,y \in M.
\end{equation}

Let $d := \sum_{y \in \mathcal{F}}ye_{\hat{B}}y^* \in \langle M, \hat{B}\rangle^+$, and then $\mathrm{Tr}_{\langle M, \hat{B}\rangle}(d) = \sum_{y \in \mathcal{F}} \varphi(y^* y) < +\infty$ by \eqref{Eq-3.3}. In the exactly same way as in the proof of (1) $\Rightarrow$ (2) of \cite[Theorem F.12]{BrownOzawa:Book} we see, by using \eqref{Eq-3.1}, that the $\sigma$-weakly closed convex hull $\mathfrak{C}$ of $\{ u^* d u\,|\,u \in A^u\}$ does not contain $0$. Moreover, it is plain to see that $J_M 1_B J_M d = d$. Since $1_B \in Z(\hat{B})$ and hence $J_M 1_B J_M \in Z(\langle M, \hat{B} \rangle)$, we conclude that $\mathfrak{C}$ sits in $(1_A J_M 1_B J_M)\langle M, \hat{B} \rangle(1_A J_M 1_B J_M)$. Since $d \geq 0$ and $\mathrm{Tr}_{\langle M, \hat{B} \rangle}(d) < +\infty$, $\mathfrak{C}$ is embedded, as a closed convex set, into $L^2(\langle M, \hat{B} \rangle,\mathrm{Tr}_{\langle M, \hat{B} \rangle})$, the usual GNS Hilbert space associated with $\mathrm{Tr}_{\langle M, \hat{B}\rangle}$. Hence one can choose a unique minimal point $d_0 \in \mathfrak{C}$ with respect to the Hilbert space norm $\Vert-\Vert_{\mathrm{Tr}_{\langle M,\hat{B}\rangle}}$, which in turn falls in $(1_A J_M 1_B J_M)\langle M, \hat{B} \rangle(1_A J_M 1_B J_M)\cap A'$ and satisfies $\mathrm{Tr}_{\langle M,\hat{B}\rangle}(d_0) < +\infty$. Choosing a suitable spectral projection of $d_0$   
we get a nonzero projection $e \in \langle M, \hat{B} \rangle\cap A'$ such that $e \leq 1_A J_M 1_B J_M$ and $\mathrm{Tr}_{\langle M, \hat{B}\rangle}(e) < +\infty$. 

The projection $e$ apparently gives an $A$--$B$ bimodule $\mathcal{K} := e\mathcal{H}$ with left and right (unital) actions $a\cdot\xi\cdot b := a J_M b^* J_M\xi$ for $a \in A$, $\xi \in \mathcal{K}$, $b \in B$. The GNS representation of $B$ associated with $\mathrm{Tr}_B$ is simply given by the restriction $B \curvearrowright L^2(B) := 1_B L^2(\hat{B})$ with the canonical embedding $\Lambda_{\mathrm{Tr}_B} := \Lambda_{\mathrm{Tr}_{\hat{B}}}\!\upharpoonright_B$, and moreover the canonical unitary conjugation $J_B$ is also just the restriction of $J_{\hat{B}}$ to $L^2(B)$. Thus we get the right $B$-module embedding $U_0 := U\!\upharpoonright_{\mathcal{K}} : \mathcal{K} \hookrightarrow \ell^2(I)\bar{\otimes}L^2(B)_B$ ($\subseteq (1\otimes 1_B)(\ell^2(I)\bar{\otimes}L^2(\hat{B}))$, and $U_0 U_0^* \in B(\ell^2(I))\bar{\otimes} B$ satisfies $(\mathrm{Tr}_{B(\ell^2(I))}\bar{\otimes}\mathrm{Tr}_B)(U_0 U_0^*) = \mathrm{Tr}_{\langle M, \hat{B}\rangle}(e) < +\infty$ by \eqref{Eq-3.2}. By the same reason as in the beginning of the proof of \cite[Proposition F.10]{BrownOzawa:Book} or by \cite[Lemma A.1]{Vaes:Asterisque07} there are $n \in \mathbb{N}$ and a nonzero $z \in \mathcal{Z}(B)^p$ such that $\mathcal{K}_0 := J_M z J_M\mathcal{K}$ is still a non-trivial $A$--$B$ bimodule and $(U_0\!\upharpoonright_{J_M z J_M\mathcal{K}})(U_0\!\upharpoonright_{J_M z J_M\mathcal{K}})^* = (1\otimes J_B z J_B)U_0 U_0^* (1\otimes J_B zJ_B) = (1\otimes z)U_0 U_0^* \precsim P_n\otimes z$ in $B(\ell^2(I))\bar{\otimes}B = (\mathbb{C}1\bar{\otimes}J_B BJ_B)'$, where $P_n$ is a rank $n$ projection in $B(\ell^2(I))$. Choose a partial isometry $v \in (\mathbb{C}1\bar{\otimes}J_B BJ_B)'$ with $v^* v=(U_0\!\upharpoonright_{J_M z J_M\mathcal{K}})(U_0\!\upharpoonright_{J_M z J_M\mathcal{K}})^*$ and $vv^* \leq P_n\otimes z$, and then we can define a right $B$-module embedding $V : \mathcal{K}_0 \hookrightarrow \mathbb{C}^n\bar{\otimes}L^2(B)$ by $V:=v(U_0\!\upharpoonright_{J_M z J_M\mathcal{K}})$ with a fixed identification $P_n\ell^2(I) = \mathbb{C}^n$. The embedding $V$ gives the normal (possibly non-unital) $*$-homomorphism $\rho : a \in A \mapsto VaV^* \in M_n(\mathbb{C})\bar{\otimes}B$. 

Let $\delta_i$ ($1\leq i\leq n$) be a standard basis of $\mathbb{C}^n$, and set $\xi_i := V^*(\delta_i\otimes\Lambda_{\mathrm{Tr}_B}(1_B)) \in \mathcal{K}_0$ ($1\leq i \leq n$). 
For $a \in A$, write $\rho(a) = \sum_{i,j=1}^n e_{ij}\otimes\rho(a)_{ij}$ with the matrix units $e_{ij}$ associated with the $\delta_i$, and then 
\begin{equation}\label{Eq-3.4} 
a\xi_j = \sideset{}{_{i=1}^n}\sum J_M\rho(a)_{ij}^* J_M\xi_i, \quad 1\leq j\leq n.
\end{equation} 
Consider $\mathbb{M} := M_{n+1}(\mathbb{C})\bar{\otimes}M \curvearrowright L^2(\mathbb{M}) := M_{n+1}(\mathbb{C})\bar{\otimes}\mathcal{H}$ (by left matrix-multiplication) with the canonical unitary conjugation $J_{\mathbb{M}}$ defined by $J_{\mathbb{M}}(e_{ij}\otimes\xi) := e_{ji}\otimes(J_M\xi)$ for $e_{ij}\otimes\xi \in L^2(\mathbb{M})$. The natural cone determined by $(\mathbb{M} \curvearrowright L^2(\mathbb{M}), J_{\mathbb{M}})$ is denoted by $\mathfrak{P}^\natural_{\mathbb{M}}$. Set $\hat{\xi} := \sum_{k=1}^n e_{0k}\otimes \xi_k \in L^2(\mathbb{M})$, and define a normal (possibly non-unital) $*$-homomorphism $\hat{\rho} : A \hookrightarrow \mathbb{M}$ by $\hat{\rho}(a) := e_{00}\otimes a + \sum_{i,j=1}^n e_{ij}\otimes\rho(a)_{ij}$ for $a \in A$. Here a standard matrix unit system $e_{ij}$ in $M_{n+1}(\mathbb{C})$ is indexed by $0,1,\dots,n$. By \eqref{Eq-3.4} one has $\hat{\rho}(a)\hat{\xi} = J_{\mathbb{M}}\hat{\rho}(a)^* J_{\mathbb{M}}\hat{\xi}$ for $a \in A$. A standard fact on polar decomposition in standard forms (c.f.~\cite[Exercise IX.1.2]{Takesaki:Book2},\cite[Lemma 3.1]{Asher:PAMS09}) guarantees the existence of a vector $|\hat{\xi}| \in \mathfrak{P}^\natural_{\mathbb{M}}$ and a partial isometry $\hat{w} \in \mathbb{M}$ satisfying that $\hat{w}|\hat{\xi}| = \hat{\xi}$, $\hat{w}^*\hat{w} = [\mathbb{M}'|\hat{\xi}|]$, $\hat{w}\hat{w}^* = [\mathbb{M}'\hat{\xi}]$ and $\hat{\rho}(a)\hat{w} = \hat{w}\hat{\rho}(a)$ for $a \in A$. Since $(e_{00}\otimes1_A)\hat{\xi} = \hat{\xi}$, one has $(e_{00}\otimes1_A)[\mathbb{M}'\hat{\xi}] = [\mathbb{M}'\hat{\xi}]$, and thus $\hat{w}\hat{w}^* \leq e_{00}\otimes1_A$. Here ($\rho(A) \subseteq$) $M_n(\mathbb{C})\bar{\otimes}M$ is naturally regarded as a corner of $\mathbb{M}$ by the numbering of the matrix units $e_{ij}$'s. Then one has, by \eqref{Eq-3.4} again, $J_{\mathbb{M}}\rho(1_A)J_{\mathbb{M}}\hat{\xi} = \hat{\xi}$, and hence $J_{\mathbb{M}}\rho(1_A)J_{\mathbb{M}}|\hat{\xi}| = |\hat{\xi}|$. By $J_{\mathbb{M}}|\hat{\xi}| = |\hat{\xi}| \in \mathfrak{P}^\natural_{\mathbb{M}}$ we get $\rho(1_A)[\mathbb{M}'|\hat{\xi}|] = [\mathbb{M}'|\hat{\xi}|]$ so that $\hat{w}^*\hat{w} \leq \rho(1_A) \leq \sum_{k=1}^n e_{kk}\otimes1_B$. Therefore, $\hat{w} = \sum_{k=1}^n e_{0k}\otimes w_k$ with $w_k \in 1_A M 1_B$. Letting $w := (e_{10}\otimes 1_A)\hat{w} = \sum_{k=1}^n e_{1k}\otimes w_k \in M_n(\mathbb{C})\bar{\otimes}M$ we have $w^* w \leq \rho(1_A)$, $ww^* \leq e_{11}\otimes1_A$ and $(e_{11}\otimes a)w = (e_{10}\otimes1_A)\hat{\rho}(a)\hat{w}=(e_{10}\otimes1_A)\hat{w}\hat{\rho}(a) = w\rho(a)$ for $a \in A$. We have assumed (by cutting by a projection in $B$) that $\mathrm{Tr}_B(1_B) < +\infty$, and hence $(\mathrm{Tr}_n\bar{\otimes}\mathrm{Tr}_B)(\rho(1_A)) < +\infty$ is now trivial. Hence we are done. \qed

\medskip
{\it Proof of the second part of the assertion{\rm:}} Only the proof of (i) $\Rightarrow$ (ii) needs small modification to prove this. Let us explain this in what follows. The standard form $(M\curvearrowright\mathcal{H},J_M,\mathfrak{P}_M^\natural)$ is constructed from $\psi$ so that $J_M \Delta_\psi J_M = \Delta_\psi^{-1}$. The $\mathrm{Tr}_B$ is given by $\psi\!\upharpoonright_B$. We need an extra argument in relation to the $d_0 \in (1_A J_M 1_B J_M)\langle M, \hat{B} \rangle(1_A J_M 1_B J_M)\cap A'$. By the assumption here the modular operator $\Delta_\psi$ has a diagonalization $\Delta_\psi = \sum_{\lambda>0} \lambda\,e^\psi_\lambda$ and satisfies $\Delta_\psi^{it} \in \langle M, \hat{B} \rangle \cap A'$ for all $t \in \mathbb{R}$. Hence all the $e^\psi_\lambda$'s fall in $\langle M, \hat{B}\rangle \cap A'$. Thus $e^\psi_\lambda\,d_0^{1/2}$ with some $\lambda$ defines a non-zero element in $\langle M, \hat{B} \rangle \cap A'$. Since $e^\psi_\lambda$ commutes with $1_A J_M 1_B J_M$ and since $\mathrm{Tr}_{\langle M, \hat{B}\rangle}(e^\psi_\lambda\,d_0\,e^\psi_\lambda) = \mathrm{Tr}_{\langle M, \hat{B}\rangle}(d_0^{1/2} e^\psi_\lambda d_0^{1/2}) \leq \mathrm{Tr}_{\langle M, \hat{B}\rangle}(d_0) < +\infty$, we may and do assume $d_0 = e^\psi_\lambda\,d_0\,e^\psi_\lambda$. Hence the $A$--$B$ bimodule $\mathcal{K}_0$ can be chosen as a subspace of $e^\psi_\lambda\,\mathcal{H}$. Therefore, the $\hat{\xi} \in L^2(\mathbb{M}) = M_{n+1}(\mathbb{C})\bar{\otimes}\mathcal{H}$ satisfies that $(I_{M_{n+1}(\mathbb{C})}\bar{\otimes}\,\Delta_\psi^{it})\hat{\xi} = \lambda^{it}\hat{\xi}$ for all $t \in \mathbb{R}$. Since $I_{M_{n+1}(\mathbb{C})}\bar{\otimes}\,\Delta_\psi$ is the modular operator of $\mathrm{Tr}_{n+1}\bar{\otimes}\,\psi$ on $\mathbb{M}$, $(I_{M_{n+1}(\mathbb{C})}\bar{\otimes}\,\Delta_\psi^{it})|\hat{\xi}|$ still falls in $\mathfrak{P}_{\mathbb{M}}^\natural$, see \cite[Lemma IX.1.4]{Takesaki:Book2}. Remark here that $J_{\mathbb{M}}$ there is nothing but the one constructed from $\mathrm{Tr}_{n+1}\bar{\otimes}\,\psi$. Hence, by the uniqueness of polar decomposition $(\mathrm{id}\bar{\otimes}\sigma_t^\psi)(\hat{w}) = \lambda^{it}\hat{w}$ and $(I_{M_{n+1}(\mathbb{C})}\bar{\otimes}\,\Delta_\psi^{it})|\hat{\xi}| = |\hat{\xi}|$ hold for every $t \in \mathbb{R}$. These modifications are enough to complete the proof. \qed 

\begin{remark}\label{R-3.2} {\rm Let $E_{\hat{B}}$ and $\varphi = \mathrm{Tr}_{\hat{B}}\circ E_{\hat{B}}$ be as in the proof of (i) $\Rightarrow$ (ii) above. Let $\widehat{E_{\hat{B}}} : \langle M, \hat{B}\rangle \rightarrow M$ be the dual operator-valued weight associated with $E_{\hat{B}}$ in the sense of \cite[\S\S1.2]{Kosaki:JFA86}. It is known that the modular operator $\Delta_\varphi$ and Connes's spacial derivative $(d(\varphi\circ\widehat{E_{\hat{B}}}))/(d(\mathrm{Tr}_{\hat{B}}\circ\mathrm{Ad}J_M((-)^*)))$ must coincide, see e.g.~the proof of \cite[Proposition 2.2]{IzumiLongoPopa:JFA98}. Moreover $\Delta_\varphi$ is affiliated with $\langle M, \hat{B} \rangle$, since $\varphi = \mathrm{Tr}_{\hat{B}}\circ E_{\hat{B}}$.  With these two facts one can prove that {\it the modular operator $\Delta_\varphi$ is the Radon--Nikodym derivative of $\varphi\circ\widehat{E_{\hat{B}}}$, i.e., $\varphi\circ\widehat{E_{\hat{B}}} = \mathrm{Tr}_{\langle M, \hat{B}\rangle, \Delta_\varphi}$ in the sense of} \cite[Lemma VIII.2.8]{Takesaki:Book2}. This explains, in full generality, the relationship that was pointed out in \cite[Eq.(1.3.1)]{Popa:InventMath06} in the almost periodic case.}
\end{remark} 

\subsection{A non-tracial version of Ioana--Peterson--Popa's theorem} Let us investigate an amalgamated free product $(M,E) = (M_1,E_1)\star_N(M_2,E_2)$.  

\begin{proposition}\label{P-3.3} Let $A$ be a {\rm(}unital{\rm)} von Neumann subalgebra of the centralizer $(M_1)_\varphi$ of a certain faithful normal state $\varphi$, and\, $\mathfrak{M}_1$ be a {\rm(}possibly non-unital{\rm)} dense {\rm(}in any von Neumann algebra topology{\rm)} $*$-subalgebra of $M_1$ with $E_1(\mathfrak{M}_1) \subseteq \mathfrak{M}_1$. Suppose that there is a net $v_\lambda$ of unitaries in $A$ such that $E_1(y^* v_\lambda x) \longrightarrow 0$ $\sigma$-strongly for all $x,y \in \mathfrak{M}_1$. Then any unitary $u \in M$ with $uAu^* \subseteq M_1$ must fall in $M_1$. In particular, $\mathcal{N}_M(A) = \mathcal{N}_{M_1}(A)$ and $A'\cap M = A'\cap M_1$. Here $\mathcal{N}_P(Q)$ denotes the set of unitaries $u \in P$ with $uQu^* = Q$ for a given unital inclusion $P \supseteq Q$ of von Neumann algebras. 
\end{proposition}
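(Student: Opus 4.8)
The plan is to argue by contradiction using Proposition~\ref{P-3.1}. Suppose $u \in M^u$ satisfies $uAu^* \subseteq M_1$ but $u \notin M_1$. Since $u \notin M_1$ and $E_{M_1} : M \to M_1$ is a faithful normal conditional expectation, the element $u$ is not fixed by $E_{M_1}$; more importantly, I want to extract from $u$ a genuine ``word component'' lying in the free complement. Write $u = E_{M_1}(u) + u^\circ$ where $u^\circ := u - E_{M_1}(u)$ lies in the complement of $M_1$ in the free product decomposition, so $E_{M_1}(u^\circ)=0$ and $u^\circ \neq 0$. The point of the hypothesis on the net $v_\lambda$ is precisely that $A \not\preceq_{M_1} N$ in the sense of Proposition~\ref{P-3.1}(i) (with $E_1$ and the trace coming from some weight on $N$ sitting under $\varphi$): the condition $E_1(y^* v_\lambda x) \to 0$ $\sigma$-strongly for all $x,y$ in the dense subalgebra $\mathfrak{M}_1$ (hence, by a routine density/approximation argument using that $E_1$ is a contraction and the $v_\lambda$ are unitaries, for all $x,y$ in the relevant class of elements $1_A M_1 p$) says exactly that no net of unitaries in $A$ is bounded away from $0$ after compression by $E_1$. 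So the hypothesis literally hands us the net witnessing $A \not\preceq_{M_1} N$.

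The heart of the matter is the following ``freeness estimate'': for a word $w \in \Lambda^\circ(M_1^\circ, M_2^\circ)$ that genuinely involves $M_2^\circ$ (i.e.\ is not in $M_1$), and for $a_1, a_2 \in A \subseteq (M_1)_\varphi$, the quantity $E(\text{(something built from } w, a_i, v_\lambda))$ should tend to $0$ along the net $v_\lambda$. Concretely, I would expand $u$ into its ``reduced word'' components relative to the free product — more precisely, approximate $u$ in $L^2$ (with respect to the state $\varphi \circ E$ extended suitably, or rather the state whose centralizer contains $A$) by finite sums of alternating words, and isolate a component containing a letter from $M_2^\circ$. The relation $uAu^* \subseteq M_1$ forces, for every $a \in A$, $u^* (uau^*) u = a$, i.e.\ $u a = (uau^*) u$ with $uau^* \in M_1$. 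Pairing this identity in $L^2$ against the non-trivial word component of $u$ and inserting the net $v_\lambda$ for $a$, the left-hand side stays of fixed size (by unitarity and the fact that the $v_\lambda \in A \subseteq (M_1)_\varphi$ are isometries for the relevant Hilbert norm), while the right-hand side, after using the freeness-with-amalgamation of $M_1$ and $M_2$ over $N$ with respect to $E$ together with the centrality of the $v_\lambda$ in $(M_1)_\varphi$, collapses to an expression dominated by terms of the form $\|E_1(y^* v_\lambda x)\|$ which go to $0$. This is the standard Ioana--Peterson--Popa style computation, adapted to the non-tracial setting by working with $\varphi$ whose centralizer contains $A$ so that the $v_\lambda$ behave like a ``unitary group'' for the inner product. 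The resulting contradiction forces $u^\circ = 0$, i.e.\ $u \in M_1$.

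The two ``in particular'' statements are then immediate: if $u \in \mathcal{N}_M(A)$ then $uAu^* = A \subseteq M_1$, so $u \in M_1$ and hence $u \in \mathcal{N}_{M_1}(A)$; the reverse inclusion is trivial, giving $\mathcal{N}_M(A) = \mathcal{N}_{M_1}(A)$. Similarly if $x \in A' \cap M$, decompose $x$ into real and imaginary parts and then into positive parts, and note each spectral projection $e$ of such a self-adjoint element lies in $A' \cap M$; writing $e$ (or better, polar-decomposing the relevant elements) and applying the unitary case — or, more directly, observing that $A' \cap M$ is generated by its unitaries, each of which conjugates $A$ into itself hence into $M_1$ — yields $A' \cap M \subseteq M_1$, so $A' \cap M = A' \cap M_1$, the reverse inclusion again being trivial.

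The main obstacle I anticipate is making the freeness estimate fully rigorous in the non-tracial setting: one must be careful that the Hilbert space norm used (coming from a state $\varphi_0$ on $M$ with $\varphi_0 \circ E_{M_1} = \varphi_0$ and $A \subseteq (M)_{\varphi_0}$, built from $\varphi$ on $M_1$ and an arbitrary faithful normal state on the free complement, or rather from $\chi \circ E$ for a weight $\chi$ on $N$ under $\varphi$) interacts correctly with $E$, $E_1$ and the modular structure, so that the word-length-reducing properties of the free product state genuinely apply and the net $v_\lambda$ acts isometrically. Getting the bookkeeping of alternating words right — in particular checking that a component of $u$ involving $M_2^\circ$ cannot be killed and that inserting $v_\lambda$ really does reduce the pairing to $E_1$-terms — is where the real work lies; everything else is formal.
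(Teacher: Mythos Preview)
Your proposal has the right IPP-style flavor---use $uAu^*\subseteq M_1$ together with the mixing net $v_\lambda$ and freeness to force $u\in M_1$---but the direct $L^2$-pairing argument you sketch is not precise enough, and the reference to Proposition~\ref{P-3.1} is a red herring (the paper never invokes the intertwining criterion here). The specific gap: you assert that after pairing the identity $uv_\lambda=(uv_\lambda u^*)u$ against a word component of $u$, ``the left-hand side stays of fixed size'' while the right-hand side dies. You do not say what quantity is being bounded below or why; there is no evident invariant of positive size attached to a single word component under right multiplication by $v_\lambda$, and the element $uv_\lambda u^*\in M_1$ on the right is not in any centralizer, so you cannot move it across the inner product. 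This is exactly the non-tracial difficulty you flag at the end, and your sketch gives no mechanism to overcome it.

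The paper's proof resolves this by passing to the basic extension $\langle M,M_1\rangle$ with Jones projection $e_{M_1}$ and dual operator-valued weight $\widehat{E_{M_1}}$. One forms the projection $p:=[AJ_M M_1 J_M u^*\xi_0]\in A'\cap\langle M,M_1\rangle$; the hypothesis $uAu^*\subseteq M_1$, combined with an approximation of $uau^*$ by $\sigma^\varphi$-analytic elements in $M_1$, gives $p\leq u^*e_{M_1}u$ and hence $\Vert\widehat{E_{M_1}}(p)\Vert_\infty<\infty$. The problem then reduces to showing that any projection $f\in A'\cap\langle M,M_1\rangle$ with $f\leq 1-e_{M_1}$ and finite $\widehat{E_{M_1}}$ must vanish. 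Approximating $f$ in $L^2(\langle M,M_1\rangle,\psi)$ (with $\psi=\varphi\circ E_{M_1}\circ\widehat{E_{M_1}}$) by $T=\sum_k x_k e_{M_1} y_k$ with $x_k,y_k$ analytic and in $\mathrm{Ker}(E_{M_1})$, and using $vfv^*=f$ for $v\in A^u\subseteq(M_1)_\varphi\subseteq\langle M,M_1\rangle_\psi$, one obtains the key inequality $\gamma^2\leq C\sum_{k,l}\Vert E_{M_1}(x_k^* v x_l)\Vert_\varphi$ uniformly in $v\in A^u$. Only then does one approximate the $x_k$ by words in $\Lambda^\circ(\mathfrak{M}_1^\circ,M_2^\circ)\setminus\mathfrak{M}_1^\circ$ and use freeness to collapse $E_{M_1}$ to $E_1$-terms that die along $v_\lambda$. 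The basic-extension apparatus is precisely what supplies the ``fixed positive size'' ($\gamma=\Vert\Lambda_\psi(f)\Vert_\psi$) and the modular bookkeeping (analytic elements, the modular condition, $\widehat{E_{M_1}}(e_{M_1})=1$) that your sketch lacks.
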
 

This is nothing but a non-tracial version of \cite[Theorem 1.1]{IoanaPetersonPopa:Acta08} due to Ioana, Peterson and Popa. Although the proof below is modeled after their proof, we need to overcome some difficulties due to the lack of trace by utilizing modular theoretic technologies. 

\begin{proof} Let $(M \curvearrowright \mathcal{H}, J_M, \mathfrak{P}^\natural_M)$ be a standard form of $M$, and $\xi_0 \in \mathfrak{P}^\natural_M$ be the unique representing vector of $\varphi\circ E_{M_1}$. Let $e_{M_1}$ be the so-called Jones projection associated with $E_{M_1}$, i.e., $e_{M_1}x\xi_0 = E_{M_1}(x)\xi_0$ for $x \in M$, and the basic extension $\langle M, M_1 \rangle$ is defined to be $M \vee \{e_{M_1}\}'' = J_M M_1' J_M \curvearrowright \mathcal{H}$. Consider the projection $p := [A J_M M_1 J_M u^*\xi_0] \in A' \cap(J_M M_1 J_M)' = A' \cap \langle M, M_1\rangle$. Notice that $a J_M x^* J_M u^*\xi_0 = J_M x^* J_M u^*(uau^*)\xi_0$ for $a \in A$ and $x \in M_1$, and moreover that $uau^* \in M_1$ can be approximated in any von Neumann algebra topology, by analytic elements, say $y_\lambda$, in $M_1$ with respect to the modular action $\sigma^\varphi$. Those altogether show that 
\begin{align*} 
a J_M x^* J_M u^*\xi_0 = \lim_\lambda J_M x^* J_M u^* y_\lambda\xi_0 = \lim_\lambda J_M x^* \sigma_{i/2}^\varphi(y_\lambda)^* J_M u^*\xi_0 \in [J_M M_1 J_M u^*\xi_0]
\end{align*}    
thanks to $\sigma_t^{\varphi\circ E_{M_1}}\!\upharpoonright_{M_1} = \sigma_t^\varphi$ ($t \in \mathbb{R}$) and \cite[Lemma VIII.3.18 (ii)]{Takesaki:Book2}. Consequently we get $p \leq [J_M M_1 J_M u^*\xi_0] = u^*e_{M_1}u$, which and $\widehat{E_{M_1}}(e_{M_1}) = 1$ imply $\Vert\widehat{E_{M_1}}(p)\Vert_\infty < +\infty$, where $\widehat{E_{M_1}} : \langle M, M_1 \rangle \rightarrow M$ denotes the dual operator-valued weight of $E_{M_1}$. See \cite[\S\S1.2, Lemma 3.1]{Kosaki:JFA86}. We will prove $(1-e_{M_1})p(1-e_{M_1}) = 0$. In fact, if this is the case, then $p \leq e_{M_1}$ so that $u^*\xi_0 = e_{M_1}u^*\xi_0 = E_{M_1}(u^*)\xi_0$, implying $u = E_{M_1}(u) \in M_1$ since $\xi_0$ is separating for $M \curvearrowright \mathcal{H}$. Since $\Vert \widehat{E_{M_1}}(p)\Vert_\infty < +\infty$ and $\widehat{E_{M_1}}(e_{M_1}) = 1$ as before, any spectral projection $f$ of $(1-e_{M_1})p(1-e_{M_1})$ corresponding to $[\delta,1]$ with arbitrary $\delta > 0$ still satisfies $\Vert\widehat{E_{M_1}}(f)\Vert_\infty < +\infty$. Therefore, it suffices to prove that any projection $f \in A'\cap\langle M, M_1 \rangle$ satisfying both $f \leq 1-e_{M_1}$ and $\Vert\widehat{E_{M_1}}(f)\Vert_\infty < +\infty$ must be $0$. 

In what follows we denote by $\mathcal{A}$ the $*$-subalgebra of $M$ consisting of all analytic elements with respect to $\sigma^{\varphi\circ E_{M_1}}$, which is well-known to be dense in any von Neumann algebra topology. Set $\psi := \varphi\circ E_{M_1}\circ \widehat{E_{M_1}}$, a faithful normal semifinite weight on $\langle M, M_1 \rangle$, and let $\langle M, M_1 \rangle \curvearrowright L^2(\langle M, M_1 \rangle, \psi)$ be the GNS representation with canonical embedding $\Lambda_{\psi} : \mathfrak{n}_\psi := \{x \in \langle M, M_1 \rangle\,|\,\psi(x^* x) < +\infty\} \rightarrow L^2(\langle M, M_1 \rangle, \psi)$ and norm $\Vert-\Vert_\psi$ associated with the weight $\psi$. Remark that $E_{M_1}(\mathcal{A}) \subseteq \mathcal{A}$ (thanks to $E_{M_1}\circ\sigma_t^{\varphi\circ E_{M_1}} = \sigma_t^\varphi\circ E_{M_1}$ for all $t \in \mathbb{R}$) and thus $\mathrm{span}(\mathcal{A}e_{M_1}\mathcal{A})$ becomes a dense (in any von Neumann algebra topology) $*$-subalgebra of $\mathfrak{n}_\psi^*\cap\mathfrak{n}_\psi$, and hence $\Lambda_\psi(\mathrm{span}(\mathcal{A}\,e_{M_1}\mathcal{A}))$ is dense in $L^2(\langle M, M_1\rangle,\psi)$ by \cite[Lemma 2.1]{IzumiLongoPopa:JFA98}. Thus one can choose a sequence $T_n \in \mathrm{span}(\mathcal{A}\,e_{M_1}\mathcal{A})$ in such a way that 
$\Vert \Lambda_\psi(T_n - f)\Vert_\psi \longrightarrow 0$ as $n \rightarrow \infty$, where note that $f$ clearly falls in $\mathfrak{n}_\psi$. Since $f \leq 1-e_{M_1}$ and $\sigma_t^\psi(e_{M_1}) = e_{M_1}$ ($t \in \mathbb{R}$) \cite[Lemma 5.1]{Kosaki:JFA86}, we also have $\Vert \Lambda_\psi((1-e_{M_1})T_n(1-e_{M_1})-f)\Vert_\psi \longrightarrow 0$ as $n \rightarrow \infty$ so that may and do assume that $T_n = (1-e_{M_1})T_n(1-e_{M_1})$ for all $n$. 

On contrary, suppose $f \neq 0$, that is, $\gamma := \Vert\Lambda_\psi(f)\Vert_\psi \gneqq 0$. Then one can choose $T := T_{n_0} \in \mathrm{span}(\mathcal{A}e_{M_1}\mathcal{A})$ with some $n_0$ in such a way that 
\begin{equation}\label{Eq-3.5} 
\Vert\Lambda_\psi(T)\Vert_\psi \leq 3\gamma/2, \quad 
\Vert\Lambda_\psi(T - f)\Vert_\psi \leq \gamma/5. 
\end{equation}   
For any $v \in A^u$ we compute 
\begin{align*} 
\gamma^2 - |\psi(T^* vTv^*)| 
&\leq 
|\psi(fvfv^*) - \psi(T^* vTv^*)| \\
&\leq 
|\psi((f-T)^* vfv^*)| + |\psi(T^* v(f-T)v^*)| \\
&\leq 
\Vert\Lambda_\psi(f-T)\Vert_\psi \Vert\Lambda_\psi(vfv^*)\Vert_\psi + \Vert\Lambda_\psi(T)\Vert_\psi \Vert\Lambda_\psi(v(f-T)v^*)\Vert_\psi \\
&\leq 
\Vert\Lambda_\psi(f-T)\Vert_\psi \Vert\Lambda_\psi(f)\Vert_\psi + \Vert\Lambda_\psi(T)\Vert_\psi \Vert\Lambda_\psi(f-T)\Vert_\psi \\
&\leq \gamma^2/2, 
\end{align*}  
where the first, the third, the fourth and the fifth inequalities follow from $f \in A'\cap\langle M,M_1\rangle$, the Cauchy--Schwarz inequality, $v \in (M_1)_\varphi \subset \langle M, M_1\rangle_\psi$, and \eqref{Eq-3.5}, respectively. Therefore, $\gamma^2 \leq 2|\psi(T^* v T v^*)|$ holds for all $v \in A^u$. Since $T = (1-e_{M_1})T(1-e_{M_1})$, we can write $T = \sum_{k=1}^m x_k e_{M_1} y_k$ with $x_k, y_k \in \mathcal{A}\cap\mathrm{Ker}(E_{M_1})$. Thus, for every $v \in A^u$ we have 
\begin{align*} 
\gamma^2 
&\leq 
2\sideset{}{_{k,l=1}^m}\sum|\psi(y_k^* e_{M_1} x_k^* v x_l e_{M_1} y_l v^*)| \\
&=
2\sideset{}{_{k,l=1}^m}\sum|\psi(y_k^* E_{M_1}(x_k^* v x_l)e_{M_1} y_l v^*)| \\ 
&= 
2\sideset{}{_{k,l=1}^m}\sum|\varphi\circ E_{M_1}(y_k^* E_{M_1}(x_k^* v x_l)y_l v^*)| \\
&= 
2\sideset{}{_{k,l=1}^m}\sum|\varphi\circ E_{M_1}(\sigma_i^{\varphi\circ E_{M_1}}(y_l)v^* y_k^* E_{M_1}(x_k^* v x_l))| \\
&\leq 
2 \max_{1\leq k \leq m}\Vert y_k\Vert_\infty \max_{1\leq l \leq m} \Vert\sigma_i^{\varphi\circ E_{M_1}}(y_l)\Vert_\infty \sideset{}{_{k,l=1}^m}\sum \Vert E_{M_1}(x_k^* v x_l)\Vert_\varphi. 
\end{align*}
Here the third equality is due to $\widehat{E_{M_1}}(e_{M_1}) = 1$, the fourth one follows from $v \in (M_1)_\varphi \subseteq M_{\varphi\circ E_{M_1}}$ and $y_l \in \mathcal{A}$ with the so-called modular condition, and finally the last inequality is due to the Cauchy--Schwarz inequality. Consequently we have chosen $x_1,\dots,x_m \in \mathcal{A}\cap\mathrm{Ker}(E_{M_1})$ and a universal constant $C > 0$ so that 
\begin{equation}\label{Eq-3.6} 
\gamma^2 \leq C \sideset{}{_{k,l=1}^m}\sum \Vert E_{M_1}(x_k^* v x_l)\Vert_\varphi \quad \text{for all $v \in A^u$}.  
\end{equation}     

Set $\mathfrak{M}_1^\circ := \mathfrak{M}_1 \cap M_1^\circ$. By the assumption on $\mathfrak{M}_1$ and by the Kaplansky density theorem any element $x \in M_1^\circ$ can be approximated in any von Neumann algebra topology by a bounded net of elements $x_\lambda^\circ =  x_\lambda - E_1(x_\lambda) \in \mathfrak{M}_1^\circ$ with $x_\lambda \in \mathfrak{M}_1$, $x_\lambda \longrightarrow x$. Thus $\mathfrak{M}_1 + \mathrm{span}(\Lambda^\circ(\mathfrak{M}_1^\circ,M_2^\circ)\setminus \mathfrak{M}_1^\circ)$ is also dense in $M$ in any von Neumann algebra topology so that the Kaplansky density theorem enables us to approximate each $x_k$ ($=x_k - E_{M_1}(x_k)$) by a net $x_{k,\lambda}$ in $\mathrm{span}(\Lambda^\circ(\mathfrak{M}_1^\circ,M_2^\circ)\setminus\mathfrak{M}_1^\circ)$; namely $\Vert x_{k,\lambda}\Vert_\infty \leq 2\Vert x_k\Vert_\infty$ and $x_{k,\lambda} \longrightarrow x_k$ $\sigma$-$*$-strongly. Then we have, for every $v \in A^u$, 
\begin{align*} 
&\Vert E_{M_1}(x_k^* v x_l)\Vert_\varphi 
\leq 
\Vert E_{M_1}(x_{k,\lambda}^* v x_{l,\lambda})\Vert_\varphi 
+ 
\Vert E_{M_1}(x_k^* v x_l - x_{k,\lambda}^* v x_{l,\lambda})\Vert_\varphi \\
&\leq 
\Vert E_{M_1}(x_{k,\lambda}^* v x_{l,\lambda})\Vert_\varphi 
+ \Vert(x_k - x_{k,\lambda})^* v x_l)\Vert_{\varphi\circ E_{M_1}} + \Vert x_{k,\lambda} v (x_l - x_{l,\lambda})\Vert_{\varphi\circ E_{M_1}} \\
&\leq 
\Vert E_{M_1}(x_{k,\lambda}^* v x_{l,\lambda})\Vert_\varphi 
+ \Vert \sigma_{i/2}^{\varphi\circ E_{M_1}}(x_l)\Vert_\infty \Vert x_k^* - x_{k,\lambda}^*\Vert_{\varphi\circ E_{M_1}} + 
2\Vert x_k\Vert_\infty \Vert x_l - x_{l,\lambda}\Vert_{\varphi\circ E_{M_1}}, 
\end{align*}
where we used, in the last line, that $x_l \in \mathcal{A}$ with \cite[Lemma VIII.3.18 (ii)]{Takesaki:Book2} and $v \in (M_1)_\varphi$. Let $\varepsilon > 0$ be arbitrary chosen. Then some $\lambda$ (being independent of $v$'s) satisfies that $\gamma^2 \leq \varepsilon + C \sum_{k,l=1}^m \Vert E_{M_1}(x_{k,\lambda}^* v x_{l,\lambda})\Vert_\varphi$ for all $v \in A^u$. Since any element in $\Lambda^\circ(\mathfrak{M}_1^\circ,M_2^\circ)\setminus\mathfrak{M}_1^\circ$ is written as $azb$ with $a,b \in \{1\}\cup \mathfrak{M}_1^\circ$, $z$ an alternating word in $\mathfrak{M}_1^\circ,M_2^\circ$ whose leftmost and rightmost letters are chosen from $M_2^\circ$, there are finitely many such words $a_j^{(i)}z_j^{(i)}b_j^{(i)}$, $i=1,2$, $j = 1,\dots,m'$, and positive constants $C_j > 0$, $j = 1,\dots,m'$, so that   
\begin{align*}
\gamma^2 &\leq \varepsilon + 
\sideset{}{_{j=1}^{m'}}\sum C_j \Vert E_{M_1}(a_j^{(1)}z_j^{(1)}b_j^{(1)} v a_j^{(2)}z_j^{(2)}b_j^{(2)})\Vert_\varphi \\
&= \varepsilon + 
\sideset{}{_{j=1}^{m'}}\sum C_j \Vert a_j^{(1)}E_{M_1}(z_j^{(1)} E_1(b_j^{(1)} v a_j^{(2)})z_j^{(2)})b_j^{(2)}\Vert_\varphi
\end{align*}
for all $v \in A^u$, where the equality comes from the free independence of $M_1, M_2$ and \eqref{Eq-2.2}. Applying the above estimate of $\gamma^2$ to the net $v=v_\lambda$ in our hypothesis we get $\gamma^2 \leq \varepsilon$ (at the limit in $\lambda$), a contradiction to $\gamma \gneqq 0$, since $\varepsilon$ is arbitrary. 
\end{proof} 

\begin{remark}\label{R-3.4} {\rm It is worth while to note that the inequality \eqref{Eq-3.6} is a general fact. Let $P \supseteq Q$ be $\sigma$-finite von Neumann algebras with a faithful normal conditional expectation $E_Q : P \rightarrow Q$ and $A$ be a von Neumann subalgebra of the centralizer $Q_\varphi$ with some faithful normal state $\varphi$. The middle part of discussion above shows that for each  projection $f \in A'\cap\langle P, Q\rangle$ satisfying both $f \leq 1-e_Q$ and $\Vert \widehat{E_Q}(f)\Vert_\infty < +\infty$ there are analytic (with respect to $\sigma^{\varphi\circ E_Q}$) elements $x_1,\dots,x_m \in P$ and a universal constant $C > 0$ such that 
\begin{equation*} 
\Vert \Lambda_{\varphi\circ E_Q\circ\hat{E}_Q}(f) \Vert_{\varphi\circ E_Q\circ\hat{E}_Q}^2 \leq 
C \sideset{}{_{k,l=1}^m}\sum \Vert E_Q(x_k^* v x_l)\Vert_\varphi \quad \text{for all $v \in A^u$}. 
\end{equation*}
}
\end{remark}   

\subsection{A result for controlling central sequences in amalgamated free products} Let us investigate central sequences in an amalgamated free product $(M,E) = (M_1,E_1)\star_N(M_2,E_2)$. The next result is an adaptation and/or an improvement of the methods of \cite[Proposition 3.5]{Ueda:AdvMath11} and \cite[Proposition 3.1]{Ueda:MRL} to  amalgamated free product von Neumann algebras. In this subsection we use the notations and facts summarized in \cite[\S\S2.2]{Ueda:AdvMath11}. 

\begin{proposition}\label{P-3.5} Suppose that there is a faithful normal state $\varphi$ on $M_1$ satisfying the following conditions{\rm:} 
\begin{itemize} 
\item[(a)] $\sigma_t^\varphi(N) = N$ for all $t \in \mathbb{R}$.  
\item[(b)] For every $n \in \mathbb{N}$ with $n \geq 2$ there are unitaries $u_k = u_k^{(n)}
,v_k = v_k^{(n)} \in (M_1)_\varphi$, $0 \leq k \leq n-1$, such that $E_1(u_{k_1}^* u_{k_2}) = E_N^\varphi(v_{k_1}^* v_{k_2}) = 0$ for all $0 \leq k_1 \neq k_2 \leq n-1$, where $E_N^\varphi$ denotes the unique $\varphi$-preserving conditional expectation from $M_1$ onto $N$, whose existence follows from {\rm(a)} and Takesaki's criterion. 
\end{itemize} 
Then, for any $x \in (M_1)_\varphi'\cap M^\omega$, any $y \in M_2^\circ$ and any sequence $(t_m)_m$ of real numbers we have 
\begin{align*}
\Vert E_2(y^* y)^{1/2}(x-(E_{M_1})^\omega(x))\Vert_{(\varphi\circ E_{M_1})^\omega} 
\leq 
\Vert yx - xz \Vert_{(\varphi\circ E_{M_1})^\omega}, 
\end{align*}  
with $z := \big[(\sigma_{t_m}^{\varphi\circ E_{M_1}}(y))_m\big] \in M^\omega$. 
\end{proposition}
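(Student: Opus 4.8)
The plan is to estimate the quantity $\Vert yx - xz\Vert_{(\varphi\circ E_{M_1})^\omega}^2$ from below by using the free independence of $\widetilde M_1$ and $\widetilde M_2$, or rather its $\omega$-ultrapower version, to split the computation into pieces that live over $N$. First I would reduce to the case $x = x - (E_{M_1})^\omega(x)$, i.e.\ replace $x$ by its image under $1 - (E_{M_1})^\omega$; since $z\in M^\omega$ and $y\in M_2^\circ$, the difference $yx - xz$ changes in a controlled way, and because $(E_{M_1})^\omega(x)$ commutes with $(M_1)_\varphi$ when $x$ does, one checks that the cross terms drop out after applying the $E_2$-valued (or $E_{M_1}$-valued) inner product. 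Concretely, write $x^\circ := x - (E_{M_1})^\omega(x) \in M_1^{\omega,\circ}$ relative to $E_{M_1}^\omega$ and argue that it suffices to bound $\Vert E_2(y^*y)^{1/2}\,x^\circ\Vert$ by $\Vert y x^\circ - x^\circ z'\Vert$ for a suitable $z'$; one sees that the $(E_{M_1})^\omega(x)$-part of $x$ only helps, or at worst is harmless, so no loss of generality there.

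The core computation is to expand $\Vert yx^\circ - x^\circ z\Vert_{(\varphi\circ E_{M_1})^\omega}^2 = \langle (\varphi\circ E_{M_1})^\omega\text{-inner product of } yx^\circ - x^\circ z \text{ with itself}\rangle$ and keep only the term $\Vert y x^\circ\Vert^2$, which by the module property of $E_{M_1}$ equals $(\varphi\circ E_{M_1})^\omega\big((x^\circ)^* y^* y\, x^\circ\big)$; the point is that $(E_{M_1})^\omega\big((x^\circ)^* y^* y\, x^\circ\big) = (E_{M_1})^\omega\big((x^\circ)^* E_2(y^*y) x^\circ\big)$ up to a term that one must show is orthogonal to or dominated by the rest. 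This is exactly where free independence over $\widetilde N$ (equivalently, the kernel condition \eqref{Eq-2.2} for $E_{M_1}$ applied in the ultrapower, using that $M_1^\omega$ and $M_2^\omega$ are free over $N^\omega$) enters: writing $y^* y = E_2(y^*y) + (y^*y)^\circ$ with $(y^*y)^\circ \in M_2^\circ$, the term $(x^\circ)^* (y^*y)^\circ x^\circ$ sits in a span of alternating words whose middle letter is in $M_2^\circ$ and whose outer letters $x^\circ$ lie in $M_1^{\omega,\circ}$, so $E_{M_1}^\omega$ kills it. Hence $\Vert y x^\circ\Vert^2 = (\varphi\circ E_{M_1})^\omega\big((x^\circ)^* E_2(y^*y)\, x^\circ\big) = \Vert E_2(y^*y)^{1/2} x^\circ\Vert^2$ exactly, using that $E_2(y^*y)\in N \subseteq (M_1)_\varphi$ is centralized appropriately and commutes with $x^\circ$ up to the ultrapower limit — this uses $x \in (M_1)_\varphi'\cap M^\omega$ together with $N\subseteq (M_1)_\varphi$, which is where condition (a) is silently needed to make $N$ sit in the centralizer after adjusting $\varphi\circ E_{M_1}$.

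The remaining cross terms $-\langle yx^\circ, x^\circ z\rangle - \langle x^\circ z, yx^\circ\rangle + \Vert x^\circ z\Vert^2$ must be handled so that the inequality goes the right way; the plan is to show the real part of the cross term $2\,\mathrm{Re}\,(\varphi\circ E_{M_1})^\omega\big(z^*(x^\circ)^* y x^\circ\big)$ vanishes, again by a freeness/word-length argument: $z = [(\sigma_{t_m}^{\varphi\circ E_{M_1}}(y))_m]$ has the same "$M_2^\circ$ in the middle" structure as $y$ (since $\sigma_t^{\varphi\circ E_{M_1}}$ preserves $M_2$ and $\mathrm{Ker}(E_2)$ by \eqref{Eq-2.1}), so $z^* (x^\circ)^* y x^\circ$ is a combination of alternating words of the form (letter from $M_2^\circ$)$\cdot$(word with outer $M_1^\circ$)$\cdot$(letter from $M_2^\circ$) wedged around central $x^\circ$-factors, on which $\varphi\circ E_{M_1}^\omega$ is zero by \eqref{Eq-2.2} — here is where I expect the main obstacle: one has to be careful that $z$ and $y$ are genuinely "different" letters in the free product ultrapower sense (they differ by the modular flow with parameters $t_m$), so that even the diagonal terms $z^* y$-type products do not collapse to something with nonzero expectation; if the $t_m$ were all $0$ one would get $z=y$ and a nontrivial $\Vert x^\circ y\Vert^2$ contribution, but that only strengthens the inequality since $\Vert x^\circ z\Vert^2\geq 0$. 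So in fact one does not need the cross term to vanish: one shows $2\,\mathrm{Re}\,\langle yx^\circ, x^\circ z\rangle \leq \Vert y x^\circ\Vert^2 - \Vert E_2(y^*y)^{1/2}x^\circ\Vert^2 + \Vert x^\circ z\Vert^2$ trivially if the first difference is $0$, giving $\Vert yx^\circ - x^\circ z\Vert^2 \geq \Vert y x^\circ\Vert^2 - 2\mathrm{Re}\langle\cdot\rangle + \Vert x^\circ z\Vert^2 \geq \Vert E_2(y^*y)^{1/2}x^\circ\Vert^2$. Thus the whole argument reduces to the two freeness computations above plus the centralizer commutation; the delicate point throughout is bookkeeping the ultrapower: all identities like $E_{M_1}^\omega$ killing alternating words, and $x^\circ$ asymptotically commuting with $N$, hold only in the $\Vert\cdot\Vert_{(\varphi\circ E_{M_1})^\omega}$-limit, and one must invoke the relevant facts from \cite[\S\S2.2]{Ueda:AdvMath11} and the freeness of $M_1^\omega, M_2^\omega$ over $N^\omega$ to justify interchanging limits with $E$-valued inner products.
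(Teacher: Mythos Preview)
Your proposal has a genuine gap at its core. You write $x^\circ := x - (E_{M_1})^\omega(x)$ and then treat it as a ``letter in $M_1^{\omega,\circ}$'' so that $(x^\circ)^*(y^*y)^\circ x^\circ$ becomes an alternating word killed by $(E_{M_1})^\omega$. But $x$ lies only in $(M_1)_\varphi'\cap M^\omega$, not in $M_1^\omega$; hence $x^\circ$ is merely an element of $\mathrm{Ker}\big((E_{M_1})^\omega\big)\subset M^\omega$, and the freeness of $M_1^\omega$ and $M_2^\omega$ over $N^\omega$ says nothing about such general elements. In fact the identity $\Vert yx^\circ\Vert^2 = \Vert E_2(y^*y)^{1/2}x^\circ\Vert^2$ is simply false for arbitrary $x^\circ\in\mathrm{Ker}\big((E_{M_1})^\omega\big)$: take $x^\circ = y'\in M_2^\circ$ and compare $E_2((y')^*y^*yy')$ with $E_2((y')^*E_2(y^*y)y')$. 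Two further slips compound this: condition (a) gives only $\sigma_t^\varphi(N)=N$ globally, not $N\subseteq (M_1)_\varphi$, so your commutation of $E_2(y^*y)$ past $x^\circ$ is unjustified; and $\sigma_t^{\varphi\circ E_{M_1}}$ does \emph{not} preserve $M_2^\circ$ (only $\sigma_t^{\chi\circ E}$ with $\chi$ on $N$ does, by \eqref{Eq-2.1}); it sends $M_2^\circ$ into the closure of $M_1 M_2^\circ M_1$ via the Connes cocycle, which the paper handles carefully with a mollified $y_\ell$.

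The decisive omission is that you never use hypothesis (b). The paper's proof depends on it essentially: one decomposes $L^2(M,\varphi\circ E_{M_1})$ into $\overline{\Lambda(M_1)}\oplus\mathcal{X}_1\oplus\mathcal{X}_2\oplus\mathcal{X}_3\oplus\mathcal{X}_4$ according to word type, and then uses the unitaries $u_k^{(n)},v_k^{(n)}$ to show that the conjugations $S_k,T_k$ send $\mathcal{X}_2,\mathcal{X}_3,\mathcal{X}_4$ to $n$ mutually orthogonal copies of themselves. Since a representative sequence $(x(m))_m$ of $x$ is asymptotically fixed by these conjugations, the components $P_2,P_3,P_4$ of $\Lambda(x(m))$ are forced to have norm $O(1/\sqrt{n})$ for every $n$, hence vanish in the ultralimit. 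This is what pins $\Lambda_{(\varphi\circ E_{M_1})^\omega}(x^\circ)$ to the subspace $\mathcal{X}_1$ of words beginning in $M_1^\circ$ and ending in $M_1^\triangledown$, and only \emph{then} do the freeness-type orthogonality computations you sketch become legitimate. Without this averaging step the argument cannot start.
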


Remark here that any bounded sequence $(\sigma_{t_m}^{\varphi\circ E_{M_1}}(x(m)))_m$ with arbitrary $(x(m))_m$ giving an element in $M^\omega$ gives again an element in $M^\omega$, as shown in the proof of \cite[Proposition 3.1]{Ueda:MRL}. A key fact behind this is that any modular action $\sigma^\psi$ satisfies $\psi\circ \sigma_{t}^\psi = \psi$ for all $t \in \mathbb{R}$. In particular, the element $z$ in the statement above makes sense.     

\begin{proof} Write $M_1^\triangledown := \mathrm{Ker}(E_N^\varphi)$. One can easily see, by using $x \in M_1 \mapsto$ $E_1(x) + (x - E_1(x)) \in N + M_1^\circ$ or $E_N^\varphi(x) + (x - E_N^\varphi(x)) \in N + M_1^\triangledown$, that $\mathrm{span}(\Lambda^\circ(M_1^\circ,M_2^\circ)\setminus M_1^\circ)$ coincides with the linear span of the following sets of words: 
\begin{equation*} 
\underbrace{M_1^\circ \cdots M_2^\circ}_{\text{alternating}}M_1^\triangledown, \quad \underbrace{M_1^\circ \cdots M_2^\circ}_{\text{alternating}}, \quad 
\underbrace{M_2^\circ \cdots M_2^\circ}_{\text{alternating}}M_1^\triangledown, \quad 
\underbrace{M_2^\circ \cdots M_2^\circ}_{\text{alternating}}.
\end{equation*} 
Define four closed subspaces $\mathcal{X}_1 := \left[\Lambda_{\varphi\circ E_{M_1}}(M_1^\circ \cdots M_2^\circ M_1^\triangledown)\right]$, $\mathcal{X}_2 := \left[\Lambda_{\varphi\circ E_{M_1}}(M_1^\circ \cdots M_2^\circ)\right]$, $\mathcal{X}_3 := \left[\Lambda_{\varphi\circ E_{M_1}}(M_2^\circ \cdots M_2^\circ M_1^\triangledown)\right]$, $\mathcal{X}_4 := \left[\Lambda_{\varphi\circ E_{M_1}}(M_2^\circ \cdots M_2^\circ)\right]$ in $\mathcal{H} := L^2(M,\varphi\circ E_{M_1})$, and clearly   
\begin{equation*} 
\mathcal{H} = \overline{\Lambda_{\varphi\circ E_{M_1}}(M_1)} \oplus \mathcal{X}_1 \oplus \mathcal{X}_2 \oplus \mathcal{X}_3 \oplus \mathcal{X}_4. 
\end{equation*} 
Denote by $P_i$, $i=1,2,3,4$, the projection from $\mathcal{H}$ onto $\mathcal{X}_i$. Remark that 
\begin{equation}\label{Eq-3.7} 
\left(I_{\mathcal{H}} - \sideset{}{_{i=1}^4}\sum P_i\right)\Lambda_{\varphi\circ E_{M_1}}(x) = \Lambda_{\varphi\circ E_{M_1}}(E_{M_1}(x)), \quad x \in M. 
\end{equation}

Let $n \in \mathbb{N}$ with $n \geq 2$ be fixed. Define unitary operators $S_k = S_k^{(n)}, T_k = T_k^{(n)}$ ($k = 0,\dots,n-1$) on $\mathcal{H}$ by 
\begin{equation*} 
S_k\Lambda_{\varphi\circ E_{M_1}}(x) := \Lambda_{\varphi\circ E_{M_1}}(u_k x u_k^*), \quad 
T_k\Lambda_{\varphi\circ E_{M_1}}(x) := \Lambda_{\varphi\circ E_{M_1}}(v_k x v_k^*), \quad x \in M, 
\end{equation*}
with $u_k = u_k^{(n)}, v_k = v_k^{(n)} \in (M_1)_\varphi \subseteq M_{\varphi\circ E_{M_1}}$ in our hypothesis. Here are simple claims. 
\begin{itemize} 
\item[(A)] $\{S_k\mathcal{X}_i\}_{k=0}^{n-1}$ is an orthogonal family of closed subspaces, $i=3,4$. 
\item[(B)] $\{T_k\mathcal{X}_2\}_{k=0}^{n-1}$ is an orthogonal family of closed subspaces.
\end{itemize}
The proofs of those are essentially same, but (A) is easier than (B). Thus we prove only (B) here and leave (A) to the reader. By using $x \mapsto E_i(x) + (x - E_i(x)) \in N + M_i^\circ$ ($i=1,2$) again and again we have
\begin{align*} 
(v_{k_2}&(M_1^\circ\cdots M_2^\circ)v_{k_2}^*)^* (v_{k_1}(M_1^\circ\cdots M_2^\circ)v_{k_1}^*) \\
&= v_{k_2} (M_2^\circ \cdots (M_1^\circ v_{k_2}^* v_{k_1} M_1^\circ)\cdots M_2^\circ) v_{k_1}^* \subseteq  v_{k_2} N v_{k_1}^* + v_{k_2}\mathrm{Ker}(E_{M_1})v_{k_1}^*. 
\end{align*}
The desired assertion immediately follows from that $v_k \in (M_1)_\varphi$; in fact, if $k_1 \neq k_2$, then 
\begin{align*} 
&\varphi\circ E_{M_1}(v_{k_2} N v_{k_1}^*) = \varphi(N v_{k_1}^* v_{k_2}) = \varphi(N E_N^\varphi(v_{k_1}^* v_{k_2})) = \{0\}, \\ 
&\varphi\circ E_{M_1}(v_{k_2}\mathrm{Ker}(E_{M_1})v_{k_1}^*) = \varphi(v_{k_2}E_{M_1}(\mathrm{Ker}(E_{M_1}))v_{k_1}^*) = \{0\}.
\end{align*} 

\medskip
Let us choose arbitrary $x \in (M_1)_\varphi'\cap M^\omega$ with representative $(x(m))_m$. For each $\varepsilon > 0$ and each $n \in \mathbb{N}$ with $n \geq 2$ one can choose a neighborhood $W=W_{\varepsilon,n}$ in $\beta(\mathbb{N})$ at $\omega$ so that 
\begin{equation*} 
\Vert\Lambda_{\varphi\circ E_{M_1}}(x(m)-u_k x(m) u_k^*)\Vert_{\mathcal{H}} < \varepsilon, \quad \Vert\Lambda_{\varphi\circ E_{M_1}}(x(m)-v_k x(m) v_k^*)\Vert_{\mathcal{H}} < \varepsilon
\end{equation*} 
for all $0 \leq k \leq n-1$ and $m \in W \cap \mathbb{N}$, where the $u_k = u_k^{(n)}, v_k = v_k^{(n)}$ are as above. For each $i=3,4$ and every $m \in W\cap\mathbb{N}$ we have, with the above $S_k = S_k^{(n)}$,  
\begin{align*} 
&\Vert P_i\Lambda_{\varphi\circ E_{M_1}}(x(m))\Vert_{\mathcal{H}}^2 \\
&= 
\frac{1}{n}\sideset{}{_{k=0}^{n-1}}\sum \Vert S_k P_i\Lambda_{\varphi\circ E_{M_1}}(x(m))\Vert_{\mathcal{H}}^2 \\
&\leq 
\frac{2}{n}\sideset{}{_{k=0}^{n-1}}\sum\Big\{ 
\Vert S_k P_i\Lambda_{\varphi\circ E_{M_1}}(x(m)) - 
S_k P_i S_k^*\Lambda_{\varphi\circ E_{M_1}}(x(m))\Vert_{\mathcal{H}}^2 + \Vert S_k P_i S_k^*\Lambda_{\varphi\circ E_{M_1}}(x(m))\Vert_{\mathcal{H}}^2\Big\} \\
&= 
\frac{2}{n}\sideset{}{_{k=1}^{n-1}}\sum
\Vert S_k P_i S_k^*\Lambda_{\varphi\circ E_{M_1}}(u_k x(m)u_k^* -x(m))\Vert_{\mathcal{H}}^2 + \frac{2}{n}\sideset{}{_{k=0}^{n-1}}\sum\Vert S_k P_i S_k^*\Lambda_{\varphi\circ E_{M_1}}(x(m))\Vert_{\mathcal{H}}^2 \\
&< 
2 \varepsilon^2 + \frac{2}{n}\sideset{}{_{k=0}^{n-1}}\sum\Vert S_k P_i S_{k}^*\Lambda_{\varphi\circ E_{M_1}}(x(m))\Vert_{\mathcal{H}}^2 \\
&\leq 
2 \varepsilon^2 + \frac{2}{n}\Vert\Lambda_{\varphi\circ E_{M_1}}(x(m))\Vert_{\mathcal{H}}^2 \quad \text{(by the claim (A))} \\
&\leq 
2 \varepsilon^2 + 2\Vert((x(m))_m\Vert_\infty^2/n. 
\end{align*} 
Similarly, using the claim (B) with $T_k^{(n)}$ instead of $S_k^{(n)}$ we have 
\begin{equation*} 
\Vert P_2\Lambda_{\varphi\circ E_{M_1}}(x(m))\Vert_{\mathcal{H}}^2 < 2\varepsilon^2 + 2\Vert((x(m))_m\Vert_\infty^2/n 
\end{equation*}
for every $m \in W \cap \mathbb{N}$. Since $n$ and $\varepsilon$ are arbitrary, for each $\delta > 0$ one can find a neighborhood $W_\delta$ in $\beta(\mathbb{N})$ at $\omega$ so that 
\begin{equation}\label{Eq-3.8}
\Vert (P_2+P_3+P_4)\Lambda_{\varphi\circ E_{M_1}}(x(m))\Vert_{\mathcal{H}} < \delta
\end{equation}
for all $m \in W_\delta \cap \mathbb{N}$. 

In the standard embedding $L^2(M^\omega,(\varphi\circ E_{M_1})^\omega) \hookrightarrow \mathcal{H}^\omega$ we have, by \eqref{Eq-3.7} and  \eqref{Eq-3.8},  
\begin{align*} 
&\Big\Vert\Lambda_{(\varphi\circ E_{M_1})^\omega}(y(x - (E_{M_1})^\omega(x))) - \big[(y P_1\Lambda_{\varphi\circ E_{M_1}}(x(m)))_m\big]\Big\Vert_{\mathcal{H}^\omega} \\
&= 
\lim_{m\rightarrow\omega} 
\big\Vert\Lambda_{\varphi\circ E_{M_1}}(y(x(m) - E_{M_1}(x(m)))) - y P_1\Lambda_{\varphi\circ E_{M_1}}(x(m))\big\Vert_{\mathcal{H}} \\
&= 
\lim_{m\rightarrow\omega} 
\big\Vert y(P_2 + P_3 + P_4)\Lambda_{\varphi\circ E_{M_1}}(x(m))\big\Vert_{\mathcal{H}} \\
&\leq 
\sup_{m \in W_\delta \cap \mathbb{N}} \big\Vert y(P_2 + P_3 + P_4)\Lambda_{\varphi\circ E_{M_1}}(x(m))\big\Vert_{\mathcal{H}} < 
\Vert y \Vert_\infty \delta, 
\end{align*}   
and hence 
\begin{equation}\label{Eq-3.9} 
\Lambda_{(\varphi\circ E_{M_1})^\omega}(y(x - (E_{M_1})^\omega(x))) = \big[(y P_1\Lambda_{\varphi\circ E_{M_1}}(x(m)))_m\big] 
\end{equation}
in $\mathcal{H}^\omega$, since $\delta$ is arbitrary. Trivially, in $\mathcal{H}^\omega$, 
\begin{align}\label{Eq-3.10} 
\Lambda_{(\varphi\circ E_{M_1})^\omega}(y(E_{M_1})^\omega(x) &- (E_{M_1})^\omega(x)z) \notag\\ &= \big[(\Lambda_{\varphi\circ E_{M_1}}(y E_{M_1}(x(m)) - E_{M_1}(x(m))\sigma_{t_m}^{\varphi\circ E_{M_1}}(y)))_m\big]. 
\end{align}
Set 
\begin{align*} 
y_\ell 
&:= 
\int_{-\infty}^{+\infty} \sigma_t^{\varphi\circ E_{M_1}}(y)\,\frac{e^{-t^2/\ell}\,dt}{\sqrt{\ell\pi}} \\
&=  
\int_{-\infty}^{+\infty} [D\varphi\circ E_{M_1}:D\chi\circ E]_t\,\sigma_t^{\chi\circ E}(y)\,[D\varphi\circ E_{M_1}:D\chi\circ E]_t^*\,\frac{e^{-t^2/\ell}\,dt}{\sqrt{\ell\pi}}
\end{align*} 
with a fixed faithful normal state $\chi$ on $N$. Clearly $y_\ell$ falls in the $\sigma$-weak (or $\sigma$-strong) closure of $\mathrm{span}(M_1 M_2^\circ M_1)$, since $[D\varphi\circ E_{M_1}:D\chi\circ E]_t = [D\varphi:D\chi\circ E_1]_t \in M_1$ by \cite[Corollary IX.4.22 (ii)]{Takesaki:Book2} and $\sigma_t^{\chi\circ E}(y) \in M_2^\circ$ by \eqref{Eq-2.1}. Set $z_\ell := \big[(\sigma_{t_m}^{\varphi\circ E_{M_1}}(y_\ell))_m\big] \in M^\omega$, which is well-defined as remarked just before the proof. Note that $\sigma_{-i/2}^{\varphi\circ E_{M_1}}(\sigma_{t_m}^{\varphi\circ E_{M_1}}(y_\ell)) = \sigma_{t_m}^{\varphi\circ E_{M_1}}(\sigma_{-i/2}^{\varphi\circ E_{M_1}}(y_\ell))$. For each $\ell$ we have, by \eqref{Eq-3.7}, \eqref{Eq-3.8} as before and by \cite[Lemma VIII.3.18 (ii)]{Takesaki:Book2}, 
\begin{align*} 
&\Big\Vert\Lambda_{(\varphi\circ E_{M_1})^\omega}((x - (E_{M_1})^\omega(x))z_\ell) - \big[(J \sigma_{-i/2}^{\varphi\circ E_{M_1}}(\sigma_{t_m}^{\varphi\circ E_{M_1}}(y_\ell))^* J P_1\Lambda_{\varphi\circ E_{M_1}}(x(m)))_m\big]\Big\Vert_{\mathcal{H}^\omega} \\
&= 
\lim_{m\rightarrow\omega} 
\big\Vert J \sigma_{-i/2}^{\varphi\circ E_{M_1}}(\sigma_{t_m}^{\varphi\circ E_{M_1}}(y_\ell))^* J \big(\Lambda_{\varphi\circ E_{M_1}}(x(m) - E_{M_1}(x(m))) - P_1\Lambda_{\varphi\circ E_{M_1}}(x(m))\big)\big\Vert_{\mathcal{H}} \\
&\leq 
\sup_{m \in W_\delta \cap \mathbb{N}} \big\Vert J \sigma_{-i/2}^{\varphi\circ E_{M_1}}(\sigma_{t_m}^{\varphi\circ E_{M_1}}(y_\ell))^* J(P_2 + P_3 + P_4)\Lambda_{\varphi\circ E_{M_1}}(x(m))\big\Vert_{\mathcal{H}} < 
\Vert \sigma_{-i/2}^{\varphi\circ E_{M_1}}(y_\ell) \Vert_\infty \delta 
\end{align*}   
with the modular conjugation $J$ of $M \curvearrowright \mathcal{H} = L^2(M,\varphi\circ E_{M_1})$. Hence, for each $\ell$,  
\begin{align}\label{Eq-3.11} 
\Lambda_{(\varphi\circ E_{M_1})^\omega}(&(x - (E_{M_1})^\omega(x))z_\ell) \notag\\ 
&= \big[(J \sigma_{-i/2}^{\varphi\circ E_{M_1}}(\sigma_{t_m}^{\varphi\circ E_{M_1}}(y_\ell))^* J P_1\Lambda_{\varphi\circ E_{M_1}}(x(m)))_m\big] 
\end{align}
in $\mathcal{H}^\omega$, since $\delta$ is arbitrary. Note that 
\begin{equation*}
y P_1\Lambda_{\varphi\circ E_{M_1}}(x_m))
\in \overline{\mathrm{span}\Lambda_{\varphi\circ E_{M_1}}(M_2^\circ M_1^\circ \cdots M_2^\circ M_1^\triangledown)}. 
\end{equation*}
On the other hand, 
\begin{align*}
\Lambda_{\varphi\circ E_{M_1}}(y E_{M_1}(x(m)) &- E_{M_1}(x(m))\sigma_{t_m}^{\varphi\circ E_{M_1}}(y)) \\
&\in \overline{\mathrm{span}\Lambda_{\varphi\circ E_{M_1}}(M_2^\circ)}\oplus \overline{\mathrm{span}\Lambda_{\varphi\circ E_{M_1}}(M_2^\circ M_1^\triangledown)}\\
&\quad\oplus\overline{\mathrm{span}\Lambda_{\varphi\circ E_{M_1}}(M_1^\circ M_2^\circ)}\oplus\overline{\mathrm{span}\Lambda_{\varphi\circ E_{M_1}}(M_1^\circ M_2^\circ M_1^\triangledown)}
\end{align*}
and
\begin{align*}
J \sigma_{-i/2}^{\varphi\circ E_{M_1}}&(\sigma_{t_m}^{\varphi\circ E_{M_1}}(y_\ell))^* J P_1\Lambda_{\varphi\circ E_{M_1}}(x(m)) \\
&\in 
\overline{\mathrm{span}\Lambda_{\varphi\circ E_{M_1}}(M_1^\circ\cdots M_2^\circ M_1^\triangledown \sigma_{t_m}^{\varphi\circ E_{M_1}}(y_\ell))} \\
&\subseteq 
\overline{\Lambda_{\varphi\circ E_{M_1}}(M_1)} \oplus \overline{\mathrm{span}\Lambda_{\varphi\circ E_{M_1}}(M_1^\circ M_2^\circ \cdots)}.  
\end{align*}
Here the last fact follows from \cite[Lemma VIII.3.18 (ii)]{Takesaki:Book2} and that $\sigma_{t_m}^{\varphi\circ E_{M_1}}(y_\ell)$ falls in the $\sigma$-strong closure of $\mathrm{span}(M_1 M_2^\circ M_1)$. Therefore, we see, by \eqref{Eq-3.9}--\eqref{Eq-3.11}, that $\Lambda_{(\varphi\circ E_{M_1})^\omega}(y(x - (E_{M_1})^\omega(x)))$ is orthogonal to both $\Lambda_{(\varphi\circ E_{M_1})^\omega}(y(E_{M_1})^\omega(x) - (E_{M_1})^\omega(x)z)$ and $\Lambda_{(\varphi\circ E_{M_1})^\omega}((x - (E_{M_1})^\omega(x))z_\ell)$. Finally, letting $\hat{x} := \big[(\sigma_{-t_m}^{\varphi\circ E_{M_1}}(x(m)))_m\big]$, $\hat{y} := \big[(\sigma_{-t_m}^{\varphi\circ E_{M_1}}(y))_m\big]$, both of which fall in $M^\omega$ as remarked just before the proof, we have 
\begin{align*} 
&\big(\Lambda_{(\varphi\circ E_{M_1})^\omega}((x - (E_{M_1})^\omega(x))z)|\Lambda_{(\varphi\circ E_{M_1})^\omega}(y(x - (E_{M_1})^\omega(x)))\big)_{(\varphi\circ E_{M_1})^\omega} \\
&= 
(\varphi\circ E_{M_1})^\omega((x-(E_{M_1})^\omega(x))^* y^* (x-(E_{M_1})^\omega(x))z) \\
&= 
(\varphi\circ E_{M_1})^\omega((\hat{x}-(E_{M_1})^\omega(\hat{x}))^* \hat{y}^* (\hat{x} - (E_{M_1})^\omega(\hat{x}))y) \\
&= 
\lim_{\ell \rightarrow \infty}(\varphi\circ E_{M_1})^\omega((\hat{x}-(E_{M_1})^\omega(\hat{x}))^* \hat{y}^* (\hat{x} - (E_{M_1})^\omega(\hat{x}))y_\ell) \\
&= 
\lim_{\ell \rightarrow \infty}(\varphi\circ E_{M_1})^\omega((x-(E_{M_1})^\omega(x))^* y^* (x - (E_{M_1})^\omega(x))z_\ell) \\
&= 
\lim_{\ell \rightarrow \infty} 
\big(\Lambda_{(\varphi\circ E_{M_1})^\omega}((x - (E_{M_1})^\omega(x))z_\ell|\Lambda_{(\varphi\circ E_{M_1})^\omega}(y(x - (E_{M_1})^\omega(x)))\big)_{(\varphi\circ E_{M_1})^\omega} = 0. 
\end{align*} 
Consequently we get $\Vert y(x-(E_{M_1})^\omega(x))\Vert_{(\varphi\circ E_{M_1})^\omega} \leq 
\Vert yx - xz \Vert_{(\varphi\circ E_{M_1})^\omega}$. We have, by \eqref{Eq-3.9},  
\begin{align*} 
&\Vert y(x - (E_{M_1})^\omega(x))\Vert_{(\varphi\circ E_{M_1})^\omega}^2 \\
&= 
\Big\Vert\big[(y P_1\Lambda_{\varphi\circ E_{M_1}}(x(m)))_m\big]\Big\Vert_{\mathcal{H}^\omega}^2 \\
&= 
\lim_{m\rightarrow\omega} 
(y P_1 \Lambda_{\varphi\circ E_{M_1}}(x(m))|y P_1 \Lambda_{\varphi\circ E_{M_1}}(x(m)))_{\varphi\circ E_{M_1}} \\
&= 
\lim_{m\rightarrow\omega} \Big\{ 
(E_2(y^*y) P_1 \Lambda_{\varphi\circ E_{M_1}}(x(m))| P_1 \Lambda_{\varphi\circ E_{M_1}}(x(m)))_{\varphi\circ E_{M_1}} \\
&\phantom{aaaaaaaaaa}+ (\underbrace{(y^* y - E_2(y^*y)) P_1 \Lambda_{\varphi\circ E_{M_1}}(x(m))}_{\text{in $\mathcal{X}_3$ orthogonal to $\mathcal{X}_1$}}| \underbrace{P_1 \Lambda_{\varphi\circ E_{M_1}}(x(m))}_{\text{in $\mathcal{X}_1$}})_{\varphi\circ E_{M_1}} \Big\} \\
&= 
\lim_{m\rightarrow\omega} 
(E_2(y^*y) P_1 \Lambda_{\varphi\circ E_{M_1}}(x(m))| P_1 \Lambda_{\varphi\circ E_{M_1}}(x(m)))_{\varphi\circ E_{M_1}} \\
&= 
\Big\Vert\big[(E_2(y^* y)^{1/2} P_1\Lambda_{\varphi\circ E_{M_1}}(x(m)))_m\big]\Big\Vert_{\mathcal{H}^\omega}^2.  
\end{align*} 
As in showing \eqref{Eq-3.9} one has 
\begin{equation*} 
\big[(E_2(y^* y)^{1/2} P_1\Lambda_{\varphi\circ E_{M_1}}(x(m)))_m\big] = i
\Lambda_{(\varphi\circ E_{M_1})^\omega}(E_2(y^* y)^{1/2}(x - (E_{M_1})^\omega(x))),  
\end{equation*}
and the proof is completed. 
\end{proof} 

\section{Some Consequences}  

We first formulate that $P$ is `non-trivial relative to $Q$' for a given inclusion of von Neumann algebras $P \supseteq Q$, and then provide some technical facts.  

\begin{definition}\label{D-4.1} A {\rm(}unital{\rm)} inclusion $P \supseteq Q$ of von Neumann algebras is said to be entirely non-trivial, if no non-zero direct summand of $Q$ is a direct summand of $P$. 
\end{definition}   

Let $P \supseteq Q$ be an inclusion of von Neumann algebras with a faithful normal conditional expectation $E_Q$. If $zP = Qz$ (as set) for some non-zero $z \in \mathcal{Z}(Q)^p$, then $Pz = Qz$ too by taking adjoints, and thus for $x \in P$ one has $zx = E_Q(zx) = zE_Q(x) = E_Q(x)z = E_Q(xz) = xz$, implying $z \in \mathcal{Z}(P)$. Hence $Qz$ is a direct summand of $P$. Therefore, $P \supseteq Q$ is entirely non-trivial if and only if $Pz \neq Qz$ or equivalently $zP \neq Qz$ for any non-zero projection $z \in \mathcal{Z}(Q)$, where $Pz$ and $zP$ denote the one-sided ideals of all $xz$ and $zx$, respectively, with $x \in P$. 

The next simple lemma, especially (3) there, will frequently be used later. 

\begin{lemma}\label{L-4.2} Let $P \supseteq Q$ be an inclusion of von Neumann algebras with a faithful normal conditional expectation $E_Q : P \rightarrow Q$. 

{\rm (1)} The following are equivalent{\rm:} 
\begin{itemize} 
\item[(i)] $P \supseteq Q$ is entirely non-trivial. 
\item[(ii)] $Pe \neq Qe$ or equivalently $eP \neq eQ$ for any non-zero projection $e \in Q$. 
\end{itemize} 

{\rm (2)} If $P \supseteq Q$ is entirely non-trivial and $f \in Q$ a projection with $c_f^Q = 1$, then $fPf \supseteq fQf$ is again entirely non-trivial. 

{\rm (3)} If $P \supseteq Q$ is entirely non-trivial, then there is a family $\{y_i\}_{i \in I}$ of elements in $\mathrm{Ker}(E_Q)$ so that $\sum_{i \in I} s(E_Q(y_i^* y_i)) = 1$, where $s(x)$ denotes the support projection of $x=x^*$.    
\end{lemma}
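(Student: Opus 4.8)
\smallskip
The plan is to reduce the whole lemma to a single working reformulation, which is essentially the content of (1): \emph{$P \supseteq Q$ is entirely non-trivial if and only if $Pe \neq Qe$ for every non-zero projection $e \in Q$, equivalently, if and only if for every non-zero projection $e \in Q$ there is $y \in \mathrm{Ker}(E_Q)$ with $ye \neq 0$}. Parts (2) and (3) then come out of exhaustion arguments based on this criterion.

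For (1), the implication (ii) $\Rightarrow$ (i) is immediate since a central projection of $Q$ is in particular a projection, and $Pe \neq Qe \iff eP \neq eQ$ follows by taking adjoints of the set equalities, as $(Pe)^* = eP$. For (i) $\Rightarrow$ (ii) I would argue contrapositively: if $Pe = Qe$ for some non-zero projection $e \in Q$, then also $eP = eQ$, whence $PeP = (Pe)(eP) \subseteq Q$, and so the $\sigma$-weakly closed two-sided ideal $Pc_e^P$ of $P$ generated by $e$ (being the $\sigma$-weak closure of $PeP$) is contained in $Q$. Since $1 \in P$ this gives $z := c_e^P \in Q \cap \mathcal{Z}(P) \subseteq \mathcal{Z}(Q)$, and $Pz \subseteq Q$ forces $Pz = Qz$; thus $Qz$ is a non-zero common direct summand, i.e.\ $P \supseteq Q$ is not entirely non-trivial. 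Finally, the last clause of the criterion is elementary: since $x - E_Q(x) \in \mathrm{Ker}(E_Q)$ and $E_Q$ is faithful, $(x - E_Q(x))e \in Qe \subseteq Q$ forces $(x-E_Q(x))e = 0$; hence $Pe = Qe$ iff $ye = 0$ for all $y \in \mathrm{Ker}(E_Q)$.

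For (2), since $c_f^Q = 1$, a standard comparison/exhaustion argument provides partial isometries $w_i \in Q$ with $w_i^* w_i \le f$ (so $w_i = w_i f$ and $w_i^* = f w_i^*$), the ranges $w_i w_i^*$ pairwise orthogonal, and $\sum_i w_i w_i^* = 1$. Suppose $fPf \supseteq fQf$ were not entirely non-trivial; by (1) there is a non-zero projection $e \le f$ in $Q$ with $fPe = (fPf)e = (fQf)e = fQe \subseteq Q$. Then for each $x \in P$ one gets $w_i^* x e = f(w_i^* x)e \in fPe = fQe \subseteq Q$, so $xe = \sum_i w_i(w_i^* x e)$ is a $\sigma$-strong limit of finite sums from the $\sigma$-strongly closed set $Qe$, hence $xe \in Qe$; thus $Pe = Qe$, contradicting entire non-triviality of $P \supseteq Q$. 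So $fPfe \neq fQfe$ for every non-zero projection $e \in fQf$, and (1) yields the claim.

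For (3), I would run an exhaustion argument: by Zorn's lemma pick a maximal family $\{y_i\}_{i\in I} \subseteq \mathrm{Ker}(E_Q)$ for which the projections $e_i := s(E_Q(y_i^* y_i))$ are pairwise orthogonal, and set $e := \sum_{i\in I} e_i$. If $e \neq 1$, apply the criterion to the non-zero projection $1-e$ to obtain $y \in \mathrm{Ker}(E_Q)$ with $y(1-e) \neq 0$; then $y' := y(1-e) \in \mathrm{Ker}(E_Q)$ (as $1-e \in Q$) is non-zero, so by faithfulness $E_Q((y')^* y') = (1-e)E_Q(y^* y)(1-e) \neq 0$, and its support $e' = s(E_Q((y')^* y'))$ is a non-zero projection under $1-e$, hence orthogonal to every $e_i$ — contradicting maximality. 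Therefore $e = 1$, i.e.\ $\sum_{i\in I} s(E_Q(y_i^* y_i)) = 1$. I do not expect a serious obstacle in the lemma: the one substantive point is the criterion in (1), in particular the two observations $PeP \subseteq Q$ and $c_e^P \in Q$; after that (2) and (3) are just bookkeeping, the only care being to cut the witnessing element down to the prescribed corner (by $1-e$, respectively through the $w_i$) so that the relevant support projections land where they should.
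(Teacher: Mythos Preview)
Your proof is correct. Parts (2) and (3) match the paper's exhaustion arguments essentially verbatim.

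The one genuine difference is in (1), (i) $\Rightarrow$ (ii). The paper works on the $Q$-side: given $Pe = Qe$, it picks an orthogonal family $\{e_i\}$ of subprojections of $c_e^Q$ with $e_i \precsim e$ in $Q$, transports $Pe = Qe$ to each $e_i$ via partial isometries in $Q$, and sums to get $Pc_e^Q = Qc_e^Q$ (using the discussion before the lemma to conclude). You instead work on the $P$-side: from $Pe = Qe$ and $eP = eQ$ you get $PeP \subseteq Q$, pass to the $\sigma$-weak closure $Pc_e^P \subseteq Q$, and observe $z := c_e^P \in \mathcal{Z}(P)\cap Q \subseteq \mathcal{Z}(Q)$ with $Pz = Qz$. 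Your route is shorter and avoids the comparison/exhaustion step entirely; the paper's route keeps everything inside $Q$ and never invokes the structure of closed ideals in $P$. Both are perfectly standard. One nitpick: your phrase ``by faithfulness'' in the last clause of (1) is slightly off --- the reason $(x-E_Q(x))e \in Q$ forces it to vanish is that $E_Q$ restricts to the identity on $Q$, not faithfulness per se; but the conclusion is of course correct.
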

\begin{proof} (1) By the discussion above (i) is equivalent to $Pz \neq Qz$ or equivalently $zP \neq Qz$ for any non-zero $z \in \mathcal{Z}(Q)^p$. Thus (ii) $\Rightarrow$ (i) is trivial, and it suffices to show (i) $\Rightarrow$ (ii). Suppose that $Pe = Qe$ for some non-zero $e \in Q^p$. By a standard exhaustion argument based on the comparison theorem we can choose an orthogonal family $\{e_i\}_{i \in I}$ of projections in $Q$ such that $e_i \precsim e$ in $Q$ for all $i \in I$ and $c_e^Q = \sum_{i \in I} e_i$. Choose a partial isometry $v_i \in Q$ with $v_i^* v_i = e_i$ and $v_i v_i^* \leq e$, and then $P e_i = P v_i^* v_i \subseteq P e v_i = Q e v_i \subseteq Q e_i$, implying $Pe_i = Q e_i \subseteq Q$. For $x \in P$ one has $xc_e^Q = \sum_{i\in I} xe_i = \sum_{i\in I} E_Q(x)e_i = E_Q(x)c_e^Q$, and therefore $Pc_e^Q = Qc_e^Q$. 

(2) By (1) it suffices to prove that $ePf \neq eQf$ for any non-zero $e \in Q^p$ with $e \leq f$. As in (1) one can find an orthogonal family $\{f_i\}_{i\in I}$ of projections in $Q$ such that $f_i \precsim f$ in $Q$ for all $i \in I$ and $\sum_{i\in I} f_i = c_f^Q = 1$. On contrary, suppose that $ePf = eQf$ for some non-zero $e \in Q^p$ with $e \leq f$. Then one has $ePf_i = eQf_i$ in the same way as in (1). Hence, as in the above (1) one can justify, by using $E_Q$, the following computation: $eP = \sum_{i\in I} ePf_i = \sum_{i\in I}eQf_i = eQ$, a contradiction to the entire non-triviality of $P \supseteq Q$ thanks to (1). 

(3) Choose a maximal (with respect to set-inclusion) family $\{y_i\}_{i\in I}$ of elements in $\mathrm{Ker}(E_Q)$ so that $\{s(E_Q(y_i^* y_i))\}_{i \in I}$ is an orthogonal family of projections in $Q$. Suppose $\sum_{i\in I}s(E_Q(y_i^* y_i))\neq 1$. Set $e := 1 - \sum_{i\in I}s(E_Q(y_i^* y_i)) \in Q^p\setminus\{0\}$. Since $P \supseteq Q$ is entirely non-trivial, one has $Pe \neq Qe$ by (1), and hence can choose $x \in P$ with $xe \not\in Q$. Hence $xe - E_Q(xe) \neq 0$ and set $y := xe - E_Q(xe) \in \mathrm{Ker}(E_Q)$. Clearly, $ye = y$, and thus $E_Q(y^* y) = e E_Q(y^* y)e$, implying $s(E_Q(y^* y)) \leq e = 1 - \sum_{i\in I} s(E_Q(y_i^* y_i))$, a contradiction to the maximality of $\{y_i\}_{i\in I}$.        
\end{proof}  

Let $(M,E) = (M_1,E_1)\star_N(M_2,E_2)$ be an amalgamated free product throughout the rest of this section. 

\begin{theorem}\label{T-4.3} Assume that there is a faithful normal state $\varphi$ on $M_1$  such that one can find a {\rm(}possibly non-unital{\rm)} dense {\rm(}in any von Neumann algebra topology{\rm)} $*$-subalgebra\, $\mathfrak{M}_1$ of $M_1$ with $E_1(\mathfrak{M}_1) \subseteq \mathfrak{M}_1$ and a net $v_\lambda$ of unitaries in the centralizer $(M_1)_\varphi$ in such a way that $E_1(y^* v_\lambda x) \longrightarrow 0$ $\sigma$-strongly for all $x,y \in \mathfrak{M}_1$. Assume also that $M_2 \supseteq N$ is entirely non-trivial. Then we have{\rm:} 
\begin{itemize}
\item[(0)] $((M_1)_\varphi)' \cap M = ((M_1)_\varphi)' \cap M_1$.  
\item[(1)] $\mathcal{Z}(M) = \mathcal{Z}(M_1)\cap\mathcal{Z}(M_2)\cap\mathcal{Z}(N)$. 
\item[(2)] Let $\chi$ be an arbitrary faithful normal semifinite weight on $N$. Then, if a unitary $u$ in $M$ satisfies $\sigma_t^{\chi\circ E} = \mathrm{Ad}u$ for some $t \in \mathbb{R}$, then $u$ must fall in $N$. In particular, $T(M) = \{ t \in \mathbb{R}\,|\,\sigma_t^{\chi\circ E_1} = \mathrm{Ad}u = \sigma_t^{\chi\circ E_2}\ \text{for some $u \in N^u$}\}$. 
\item[(3)] $M$ is semifinite if and only if there is a faithful normal semifinite trace $\mathrm{Tr}_N$ such that both $\mathrm{Tr}_N\circ E_1$ and $\mathrm{Tr}_N\circ E_2$ are traces. 
\item[(4)] $\mathcal{Z}(\widetilde{M}) = \mathcal{Z}(\widetilde{M}_1)\cap\mathcal{Z}(\widetilde{M}_2)\cap\mathcal{Z}(\widetilde{N})$. 
\end{itemize} 
\end{theorem}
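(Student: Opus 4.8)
\emph{Parts (0) and (1).} Part (0) is immediate: apply Proposition~\ref{P-3.3} with $A:=(M_1)_\varphi$, a unital von Neumann subalgebra of its own centralizer, whose hypothesis is exactly the standing assumption of the theorem; the conclusion $A'\cap M=A'\cap M_1$ is (0). For (1): since $\mathcal{Z}(M)$ commutes with $(M_1)_\varphi$, (0) puts it in $((M_1)_\varphi)'\cap M_1$, and commuting with all of $M_1$ it lies in $\mathcal{Z}(M_1)\subseteq M_1$. To push it into $N$, fix $z\in\mathcal{Z}(M)$, write $z=E_1(z)+z^\circ$ with $z^\circ\in M_1^\circ$, and expand $zy=yz$ ($y\in M_2$) in the orthogonal free-product decomposition of $L^2(M,\chi\circ E)$ for a faithful normal state $\chi$ on $N$; the length-two component yields $z^\circ M_2^\circ=\{0\}$. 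Taking $\{y_i\}\subseteq M_2^\circ$ with $\sum_i s(E_2(y_i^*y_i))=1$ from Lemma~\ref{L-4.2}(3), and applying $E_{M_2}$ to $(z^\circ y_i^*)^*(z^\circ y_i^*)=0$ (the alternating word of length three drops out by \eqref{Eq-2.2}), one gets $y_i\,b\,y_i^*=0$ with $b:=E_1(z^{\circ*}z^\circ)\in N^+$, hence $b^{1/2}y_i^*=0$ and $s(b)\,s(y_i^*y_i)=0$. Since $s(E_2(y_i^*y_i))$ is the smallest projection of $N$ dominating $s(y_i^*y_i)$ while $1-s(b)\in N$ also dominates it, $s(b)\,s(E_2(y_i^*y_i))=0$ for every $i$, so $s(b)=0$, $b=0$, and $z^\circ=0$ by faithfulness of $E_1$. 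Thus $\mathcal{Z}(M)\subseteq N$, whence $\mathcal{Z}(M)=\mathcal{Z}(M_1)\cap\mathcal{Z}(M_2)\cap\mathcal{Z}(N)$ (the reverse inclusion being trivial from $M=M_1\vee M_2$).

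\emph{Parts (2) and (3).} For (2), let $u\in M^u$ with $\sigma_t^{\chi\circ E}=\mathrm{Ad}\,u$. Then $u(M_1)_\varphi u^*=\sigma_t^{\chi\circ E}((M_1)_\varphi)=\sigma_t^{\chi\circ E_1}((M_1)_\varphi)\subseteq M_1$ by \eqref{Eq-2.1}, so Proposition~\ref{P-3.3} (the implication ``$uAu^*\subseteq M_1\Rightarrow u\in M_1$'' with $A=(M_1)_\varphi$) gives $u\in M_1$; moreover $\mathrm{Ad}\,u$ carries $M_2^\circ$ bijectively into itself (there it is $\sigma_t^{\chi\circ E_2}$ and $E_2\circ\sigma_t^{\chi\circ E_2}=\sigma_t^\chi\circ E_2$), so the bookkeeping of (1) applied to $u^\circ$ forces $u^\circ=0$, i.e. $u\in N$. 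The formula for $T(M)$ is then the equivalence ``$\sigma_t^{\chi\circ E}$ inner'' $\Leftrightarrow$ ``some $u\in N^u$ implements both $\sigma_t^{\chi\circ E_1}$ and $\sigma_t^{\chi\circ E_2}$'', using $M=M_1\vee M_2$. For (3), the ``if'' part is the modular computation $\sigma_t^{\mathrm{Tr}_N\circ E}\!\upharpoonright_{M_k}=\sigma_t^{\mathrm{Tr}_N\circ E_k}=\mathrm{id}$, so $\mathrm{Tr}_N\circ E$ is a faithful normal semifinite trace on $M$. For ``only if'', let $\tau$ be a faithful normal semifinite trace on $M$; then $w_t:=[D\chi\circ E:D\tau]_t$ is a $\sigma^\tau$-cocycle, hence a strongly continuous one-parameter unitary group $h^{it}$ with $h>0$ affiliated with $M$ and $\mathrm{Ad}(h^{it})=\sigma_t^{\chi\circ E}$, and by (2) each $h^{it}\in N$, so $h$ is affiliated with $N$; as $\sigma_s^\chi(h^{it})=\mathrm{Ad}(h^{is})(h^{it})=h^{it}$, the family $\{h^{-it}\}$ is a $\sigma^\chi$-cocycle, and the weight $\mathrm{Tr}_N$ on $N$ with $[D\mathrm{Tr}_N:D\chi]_t=h^{-it}$ satisfies $\sigma_t^{\mathrm{Tr}_N}=\mathrm{Ad}(h^{-it})\circ\mathrm{Ad}(h^{it})=\mathrm{id}$ and $\sigma_t^{\mathrm{Tr}_N\circ E_k}=\mathrm{Ad}(h^{-it})\circ\sigma_t^{\chi\circ E_k}=\mathrm{id}$ on $M_k$, so $\mathrm{Tr}_N\circ E_k$ is a trace for $k=1,2$.

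\emph{Part (4).} I would deduce (4) from part (1) applied to the core amalgamated free product $(\widetilde M,\widetilde E)=(\widetilde M_1,\widetilde E_1)\star_{\widetilde N}(\widetilde M_2,\widetilde E_2)$ of \eqref{Eq-2.5}. Entire non-triviality of $\widetilde M_2\supseteq\widetilde N$ is inherited: the $\{y_i\}\subseteq M_2^\circ$ from Lemma~\ref{L-4.2}(3), viewed in $\widetilde M_2$, lie in $\mathrm{Ker}(\widetilde E_2)$ (since $\widetilde E_2\!\upharpoonright_{M_2}=E_2$) and $\widetilde E_2(y_i^*y_i)=E_2(y_i^*y_i)$ has the same support in $\widetilde N$ as in $N$, so $\sum_i s(\widetilde E_2(y_i^*y_i))=1$; and the (equally elementary) converse of Lemma~\ref{L-4.2}(3)---the existence of such a family rules out any nonzero central summand of $\widetilde N$ being a summand of $\widetilde M_2$---gives the claim. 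The remaining task, which is the crux, is to verify the hypothesis of (1) for $\widetilde M_1$: since $\widetilde M_1$ is $\sigma$-finite (being a continuous core) one fixes a faithful normal state $\widetilde\varphi$ on it, and one wants the images of the $v_\lambda$ (or suitable unitaries built from them and the $\lambda(f)$, $f\in C_c(\mathbb{R})$) to lie in $(\widetilde M_1)_{\widetilde\varphi}$ and to satisfy $\widetilde E_1(\widetilde y^*(-)\widetilde x)\to 0$ on a dense $\widetilde E_1$-invariant $*$-subalgebra $\widetilde{\mathfrak{M}}_1$ compatible with $\mathfrak{M}_1$, the vanishing coming from $\|\sigma_s^\chi(E_1(y^*v_\lambda x))\|=\|E_1(y^*v_\lambda x)\|\to 0$. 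Granting this, (1) for $(\widetilde M,\widetilde E)$ gives $\mathcal{Z}(\widetilde M)=\mathcal{Z}(\widetilde M_1)\cap\mathcal{Z}(\widetilde M_2)\cap\mathcal{Z}(\widetilde N)$.

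\emph{Main obstacle.} The bookkeeping in (1) and (2) (the free-product $L^2$-decomposition and the left/right support matching via ``$s(E_2(\cdot))$ = least dominating projection in $N$'') is routine once set up, and (3) is a short cocycle argument given (2); the genuinely delicate step is the last one in (4)---producing a faithful normal state on the core $\widetilde M_1$ whose centralizer carries the lifted intertwining-killing net together with a compatible $\widetilde E_1$-invariant dense subalgebra---and I would expect an auxiliary lemma on transporting these data through the continuous-core construction to be needed there.
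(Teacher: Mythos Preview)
Your treatment of parts (0)--(3) is correct and essentially matches the paper's. For (2) you take a slightly cleaner route to $u\in M_1$ than the paper does (you invoke the full conclusion of Proposition~\ref{P-3.3} on $u(M_1)_\varphi u^*\subseteq M_1$ directly, while the paper passes through the cocycle $[D\varphi:D\chi\circ E_1]_t$ to reduce to (0)); both are fine, and the subsequent push from $M_1$ into $N$ is the same orthogonality/support argument in both.

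The gap is in (4). Your plan---re-verify the hypotheses of part (1) for the core amalgamated free product $(\widetilde M,\widetilde E)$---runs into exactly the obstacle you name: manufacturing a faithful normal state $\widetilde\varphi$ on $\widetilde M_1$ whose centralizer contains lifts of the $v_\lambda$, together with a compatible $\widetilde E_1$-stable dense $*$-subalgebra, is not at all clear, and the paper does \emph{not} do this. Instead the paper lifts (0) itself to the crossed product in one stroke: since $(M_1)_\varphi$ sits in the centralizer of $\varphi\circ E_{M_1}$, a standard crossed-product argument (the one in \cite[Corollary~4]{Ueda:MathScand01}) upgrades $((M_1)_\varphi)'\cap M=((M_1)_\varphi)'\cap M_1$ to
\[
((M_1)_\varphi)'\cap\big(M\rtimes_{\sigma^{\varphi\circ E_{M_1}}}\mathbb{R}\big)
=((M_1)_\varphi)'\cap\big(M_1\rtimes_{\sigma^{\varphi}}\mathbb{R}\big),
\]
and after identifying cores via Connes's Radon--Nikodym cocycle theorem this yields $\widetilde M_1'\cap\widetilde M=\mathcal{Z}(\widetilde M_1)$, hence $\mathcal{Z}(\widetilde M)\subseteq\widetilde M_1$. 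From there no state on $\widetilde M_1$ is needed at all: one repeats the orthogonality/support bookkeeping of (1) inside the core, but using the \emph{original} $y\in M_2^\circ\subset\widetilde M_2^\circ$ (so that $\widetilde E_2(y^*y)=E_2(y^*y)$ and the same family $\{y_i\}$ from Lemma~\ref{L-4.2}(3) works verbatim) to force $x-\widetilde E(x)=0$ and conclude $\mathcal{Z}(\widetilde M)\subseteq\widetilde N$. This completely bypasses the delicate lifting you were anticipating; the point is that the relative-commutant statement (0) transports to the crossed product for free once $A\subseteq M_{\varphi\circ E_{M_1}}$, whereas the full hypothesis of the theorem does not.
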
 
\begin{proof} (0) is nothing but what Proposition \ref{P-3.3} says. 

(1) Let $x \in \mathcal{Z}(M)$ be arbitrary, and then $x$ must be in $M_1$ by (0). For any $y \in M_2^\circ$ one has $y(x-E_1(x)) + yE_1(x)=yx=xy=E_1(x)y + (x-E_1(x))y$, and thus $\{y E_1(x), E_1(x)y\}$, $y(x-E_1(x))$ and $(x-E_1(x))y$ are orthogonal with respect to $E$ due to the free independence between $M_1$ and $M_2$. Thus $y(x-E_1(x))=0$ so that (by looking at the $E$-value of the product of its adjoint and itself) we get $(x-E_1(x))^* E_2(y^* y) (x-E_1(x)) = 0$. Therefore, $E_2(y^* y)(x-E_1(x))= 0$ for all $y \in M_2^\circ$. By taking its adjoint one can easily see that $(x-E_1(x))^*\!\upharpoonright_{\mathrm{ran}(E_2(y^* y))} \equiv 0$ so that $(x-E_1(x))^* s(E_2(y^* y)) = 0$ for all $y \in M_2^\circ$. By Lemma \ref{L-4.2} (3) one can find a family $\{y_i\}_{i\in I}$ of elements in $M_2^\circ$ so that $\sum_{i\in I}s(E_2(y_i^* y_i)) = 1$, which implies $x = E_1(x) \in N$. The desired assertion is now immediate. 

(2) One has $\sigma_t^{\varphi\circ E_{M_1}} = \mathrm{Ad}([D\varphi:D\chi\circ E_1]_t\,u)$ by Connes's Radon--Nikodym cocycle theorem and \cite[Corollary IX.4.20]{Takesaki:Book2}. Since $(M_1)_\varphi \subseteq M_{\varphi\circ E_{M_1}}$, we have $[D\varphi:D\chi\circ E_1]_t\,u \in M_1$ by (0). In particular, $u \in M_1$, since $[D\varphi:D\chi\circ E_1]_t \in M_1^u$. For $y \in M_2^\circ$ we have $\sigma_t^{\chi\circ E}(y)(u-E_1(u)) + \sigma_t^{\chi\circ E}(y)E_1(u) = \sigma_t^{\chi\circ E}(y)u = uy = E_1(u)y + (u-E_1(u))y$, and as in (1) we get $(u-E_1(u))y = 0$, since $\sigma_t^{\chi\circ E}(y) = \sigma_t^{\chi\circ E_2}(y) \in M_2^\circ$ by \eqref{Eq-2.1}. The same argument as in (1) again shows $u = E_1(u) \in N$. The T-set computation is straightforward. 

(3) $M$ is semifinite if and only if there is a $1$-parameter unitary group $u(t)$ in $M$ so that $\sigma_t^{\chi\circ E}=\mathrm{Ad}u(t)$, $t \in \mathbb{R}$, for a fixed faithful normal state $\chi$ on $N$. See \cite[Theorem VIII.3.14]{Takesaki:Book2}. Then $u(t) \in N$ by (2). By Stone's theorem $u(t) = H^{it}$ with some positive non-singular, self-adjoint $H$ affiliated with $N$. Since $\sigma_t^\chi(u(t)) = \sigma_t^{\chi\circ E}(u(t))=u(t)$, $H$ must indeed be affiliated with the centralizer $N_\chi$. Hence, by \cite[Lemma VIII.2.8]{Takesaki:Book2} we can construct a faithful normal semifinite weight $\chi_{H^{-1}}$ on $N$, and by the construction we observe that $\chi_{H^{-1}}\circ E = (\chi\circ E)_{H^{-1}}$. Moreover, by \cite[Lemma VIII.2.11]{Takesaki:Book2} we have $\sigma_t^{\chi_{H^{-1}}\circ E} = H^{-it}\sigma_t^{\chi\circ E}(-)H^{it} = \mathrm{id}$. Hence the $\chi_{H^{-1}}$ is a desired faithful normal semifinite trace on $N$.        

(4) By (0) together with the same argument as in \cite[Corollary 4]{Ueda:MathScand01} we observe that $((M_1)_\varphi)'\cap(M\rtimes_{\sigma^{\varphi\circ E_{M_1}}}\mathbb{R}) = ((M_1)_\varphi)'\cap(M_1\rtimes_{\sigma^\varphi}\mathbb{R})$, where $(M_1)_\varphi \subset M_1 \subseteq M \hookrightarrow M\rtimes_{\sigma^{\varphi\circ E_{M_1}}}\mathbb{R}$ canonically as in \S2. It follows that $(\widetilde{M}_1)'\cap\widetilde{M} = \mathcal{Z}(\widetilde{M}_1)$, where we need Connes's Radon--Nikodym cocycle theorem together with \cite[Theorem X.1.7]{Takesaki:Book2}. Choose an arbitrary $x \in \mathcal{Z}(\widetilde{M})$. Then $x$ must fall in $\mathcal{Z}(\widetilde{M}_1) \subseteq \widetilde{M}_1$. For $y \in M_2^\circ \subset \widetilde{M}_2^\circ$ one has $y(x-\widetilde{E}(x)) + y\widetilde{E}(x) = yx = xy = \widetilde{E}(x)y + (x-\widetilde{E}(x))y$, and thus $y(x-\widetilde{E}(x)) = 0$ since $\widetilde{M}_1$, $\widetilde{M}_2$ are freely independent with respect to $\widetilde{E}$ as remarked in \S2. In particular, we get $E_2(y^* y) (x -\widetilde{E}(x)) = 0$ for all $y \in M_2^\circ$ as in (1). Therefore, using Lemma \ref{L-4.2} (3) as in (1) once again we can prove $x = \widetilde{E}(x) \in \widetilde{N}$. Hence we are done.   
\end{proof}

Let us illustrate how the above theorem is useful by giving next two corollaries. The first corollary shows that Proposition \ref{P-3.1} is useful to confirm the necessary hypothesis of the theorem. The second one does that the theorem is still applicable beyond the case where $N$ is semifinite. Remark that the first one can be viewed as a simultaneous generalization of both \cite[Theorem 3.4]{Ueda:AdvMath11} and \cite[\S4]{Ueda:PacificJMath99}. 

\begin{corollary}\label{C-4.4} Assume that $M_1$ is diffuse, $N$ of type I and $M_2 \supseteq N$ entirely non-trivial. Let $z \in \mathcal{Z}(N)$ be the unique projection so that $Nz$ is diffuse and $Nz^\perp$ atomic, and assume further that $M_1 c_{z}^{M_1}$ has no type I direct summand when $z \neq 0$ {\rm(}i.e., this last assumption is fulfilled if $M_1$ has no type I direct summand{\rm)}. Then all the assertions of Theorem \ref{T-4.3} holds with a certain faithful normal state $\varphi$ on $M_1$. 
\end{corollary}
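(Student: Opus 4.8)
The plan is to exhibit a faithful normal state $\varphi$ on $M_1$ fulfilling the hypothesis of Theorem \ref{T-4.3}. Since $M_2\supseteq N$ is assumed entirely non-trivial, what has to be produced, besides $\varphi$, is a dense $E_1$-invariant $*$-subalgebra $\mathfrak{M}_1\subseteq M_1$ and a net $v_\lambda$ of unitaries in $(M_1)_\varphi$ with $E_1(y^*v_\lambda x)\longrightarrow 0$ $\sigma$-strongly for all $x,y\in\mathfrak{M}_1$. Now $N$, being of type I, is semifinite; fix a faithful normal semifinite trace $\mathrm{Tr}_N$ on it. I claim it suffices to choose $\varphi$ so that $(M_1)_\varphi\not\preceq_{M_1} N$ in the sense of Proposition \ref{P-3.1}, applied with $A:=(M_1)_\varphi$, $M:=M_1$, $B:=N$ and $E_B:=E_1$, which is legitimate since $1_N=1_{M_1}$ and $N$ is semifinite. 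Indeed, negating \ref{P-3.1}(i) through \ref{P-3.1}(iii) yields a net $v_\lambda$ of unitaries in $(M_1)_\varphi$ with $E_1(y^*v_\lambda x)\longrightarrow 0$ $\sigma$-strongly for all $x,y$ in $\bigcup\{M_1 q\,|\,q\in N^p,\ \mathrm{Tr}_N(q)<+\infty\}$, and for $\mathfrak{M}_1$ I would take $M_1$ itself if $N$ is finite, and otherwise the linear span of all $q M_1 q'$ with $\mathrm{Tr}_N$-finite projections $q,q'\in N$: this is $E_1$-invariant because $E_1(qxq')=qE_1(x)q'$, is $\sigma$-weakly dense because the $\mathrm{Tr}_N$-finite projections increase to $1_N$ on both sides, and lies inside the above union of subspaces. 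With such $\mathfrak{M}_1$ and $v_\lambda$, Theorem \ref{T-4.3} applies and gives all of (0)--(4).

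Thus the real point is to build $\varphi$ with $(M_1)_\varphi\not\preceq_{M_1} N$. Put $p:=c_z^{M_1}\in\mathcal{Z}(M_1)$ (so $p=0$ when $z=0$), and recall $z\le p$ and that $\mathcal{Z}(M_1)\subseteq\mathcal{Z}((M_1)_\varphi)$ for every faithful normal state $\varphi$. I would take $\varphi=\varphi_1\oplus\varphi_2$ along $M_1=M_1 p\oplus M_1 p^\perp$, where on $M_1 p$ — which has no type I direct summand by hypothesis — the state $\varphi_1$ is chosen with $(M_1 p)_{\varphi_1}$ having no type I direct summand, and on $M_1 p^\perp$ — which is diffuse, being a direct summand of $M_1$ — the state $\varphi_2$ is chosen with $(M_1 p^\perp)_{\varphi_2}$ diffuse. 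Both choices invoke standard structure theory of centralizers: a $\sigma$-finite von Neumann algebra without type I direct summand (respectively, a diffuse one) carries a faithful normal state whose centralizer has no type I direct summand (respectively, is diffuse). Then $(M_1)_\varphi p=(M_1 p)_{\varphi_1}$ has no non-zero abelian projection, while $(M_1)_\varphi p^\perp=(M_1 p^\perp)_{\varphi_2}$ is diffuse.

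To verify $(M_1)_\varphi\not\preceq_{M_1} N$, suppose the contrary and take, as in \ref{P-3.1}(iii), a projection $e\in(M_1)_\varphi$, a projection $f\in N$ with $\mathrm{Tr}_N(f)<+\infty$, a normal unital $*$-isomorphism $\theta:e(M_1)_\varphi e\to fNf$, and a non-zero partial isometry $v\in M_1$ with $vv^*\le e$, $v^*v\le f$ and $xv=v\theta(x)$ for $x\in e(M_1)_\varphi e$. Since $p\in\mathcal{Z}((M_1)_\varphi)$: if $ep\neq 0$, then $ep(M_1)_\varphi ep$, a corner of $(M_1)_\varphi p$, has no non-zero abelian projection, whereas through $\theta$ it is $*$-isomorphic to the $\mathrm{Tr}_N$-finite corner $\theta(ep)\,N\,\theta(ep)$ of the type I algebra $N$, which does have one — a contradiction; hence $e\le p^\perp$, and $e(M_1)_\varphi e$ is a corner of the diffuse algebra $(M_1)_\varphi p^\perp$, so it is diffuse. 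Next, $fz^\perp\in fNf$; if $fz^\perp\neq 0$, then $\theta^{-1}(fz^\perp)(M_1)_\varphi\,\theta^{-1}(fz^\perp)\cong fz^\perp N fz^\perp$ is a corner of the atomic algebra $Nz^\perp$, hence has a minimal projection, contradicting the diffuseness just obtained; hence $f\le z$. But now $vv^*\le e\le p^\perp$ and $v^*v\le f\le z\le p$ give $c_{vv^*}^{M_1}\le p^\perp$ and $c_{v^*v}^{M_1}\le p$, while $c_{vv^*}^{M_1}=c_{v^*v}^{M_1}$ since $vv^*$ and $v^*v$ are equivalent in $M_1$; therefore this central support vanishes, forcing $v=0$, a contradiction. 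So $(M_1)_\varphi\not\preceq_{M_1} N$, and by the first paragraph the corollary follows.

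The technical heart — and the step I expect to cost the most — is the construction of $\varphi$ in the second paragraph, specifically the passage from a von Neumann algebra with no type I direct summand to a faithful normal state whose centralizer is again free of type I direct summands. This is exactly what the assumption ``$M_1 c_z^{M_1}$ has no type I direct summand when $z\neq 0$'' feeds in, and it is what rules out corners of $(M_1)_\varphi$ isomorphic to corners of the diffuse type I part $Nz$; over the atomic part $Nz^\perp$ no such subtlety is present, mere absence of minimal projections in the centralizer being enough, which is the classical fact for diffuse algebras. Once these two centralizer facts are in place, the remaining case-analysis with central supports is routine bookkeeping.
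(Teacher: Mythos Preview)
Your proof is correct and follows essentially the same strategy as the paper: construct $\varphi$ on $M_1=M_1c\oplus M_1c^{\perp}$ (with $c:=c_z^{M_1}$) so that $(M_1c)_{\varphi}$ has no type~I direct summand and $(M_1c^{\perp})_{\varphi}$ is diffuse, then deduce $(M_1)_\varphi\not\preceq_{M_1}N$ and feed the resulting net into Theorem~\ref{T-4.3} with a $\mathrm{Tr}_N$-finite-corner algebra $\mathfrak{M}_1$. The only real difference is organizational: the paper introduces the intermediate algebra $L:=N\vee\{c\}''=Nc\oplus Nc^{\perp}$ with its $\mathrm{Tr}_N\circ E_1$-preserving expectation $E_L$, proves the two non-intertwinings $(M_1c)_{\varphi}\not\preceq_{M_1c}Nc$ and $(M_1c^{\perp})_{\varphi}\not\preceq_{M_1c^{\perp}}Nc^{\perp}$ separately, combines the two nets, and then passes from $E_L$ to $E_1$ via $E_1=E_1\circ E_L$; you instead argue the single assertion $(M_1)_\varphi\not\preceq_{M_1}N$ directly by a case analysis on condition~(iii), using the central projection $p$ and the central-support identity $c_{vv^*}^{M_1}=c_{v^*v}^{M_1}$ to force $v=0$. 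Your route avoids the auxiliary $L$ and $E_L$ at the cost of a slightly more delicate contradiction argument; both yield the same conclusion with the same ingredients.
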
 
\begin{proof} Let us fix a faithful normal semifinite trace $\mathrm{Tr}_N$ on $N$. Write $c := c_z^{M_1}$ for simplicity. Clearly $\sigma_t^{\mathrm{Tr}_N\circ E_1}(c) = c$ for all $t \in \mathbb{R}$, and thus Takesaki's criterion shows that there is a $\mathrm{Tr}_N\circ E_1$-preserving unique conditional expectation $E_L : M_1 \rightarrow L := N \vee \{c\}'' = Nc \oplus Nc^\perp$ ($\supseteq N$). In particular, one observes that $E_1 \circ E_L = E_1$ holds.  
As in the proof of \cite[Theorem 3.4]{Ueda:AdvMath11} one can choose a faithful normal state $\varphi$ on $M_1$ such that $(M_1 c)_{\varphi\!\upharpoonright_{M_1 c}}$ has no type I direct summand and $(M_1 c^\perp)_{\varphi\!\upharpoonright_{M_1 c\perp}}$ is just only diffuse. Then it is clear that $(M_1 c)_{\varphi\!\upharpoonright_{M_1 c}} \not\preceq_{M_1 c} Nc$ with $E_L\!\upharpoonright_{M_1 c}$ and $\mathrm{Tr}_N\circ E_1\!\upharpoonright_{Nc}$ and that $(M_1 c^\perp)_{\varphi\!\upharpoonright_{M_1 c^\perp}} \not\preceq_{M_1 c^\perp} Nc^\perp$ with $E_L\!\upharpoonright_{M_1 c}$ and $\mathrm{Tr}_N\circ E_1\!\upharpoonright_{Nc^\perp}$, since $Nc^\perp = (Nz^\perp)c^\perp$ is a reduced von Neumann algebra of the atomic part $Nz^\perp$. Therefore, by the equivalent condition (i) in Proposition \ref{P-3.1} there are two nets $v_\lambda^{(1)}$ and $v_\lambda^{(2)}$ of unitaries in $(M_1 c)_{\varphi\!\upharpoonright_{M_1 c}}$ and $(M_1 c^\perp)_{\varphi\!\upharpoonright_{M_1 c^\perp}}$, respectively, so that $E_L(y_1^* v_\lambda^{(1)} x_1) \longrightarrow 0$ and $E_L(y_2^* v_\lambda^{(2)} x_2) \longrightarrow 0$ $\sigma$-strongly for all $x_1,y_1 \in \bigcup\{M_1 p\,|\, p \in (Nc)^p; \mathrm{Tr}_N\circ E_1(p) < +\infty\}$ and all $x_2,y_2 \in \bigcup\{M_1 p\,|\, p \in (Nc^\perp)^p; \mathrm{Tr}_N\circ E_1(p) < +\infty\}$. Remark that $E_L = (E_L\!\upharpoonright_{M_1 c})\oplus(E_L\!\upharpoonright_{M_1 c^\perp})$ in $M_1 = M_1 c \oplus M_1 c^\perp$ and that $\mathrm{Tr}_N(p) < +\infty$ implies both $\mathrm{Tr}_N\circ E_1(pc)<+\infty$ and $\mathrm{Tr}_N\circ E_1(pc^\perp) < +\infty$ for $p \in N^p$. Thus, letting $v_\lambda := v_\lambda^{(1)}\oplus v_\lambda^{(2)} \in (M_1 c)_{\varphi\!\upharpoonright_{M_1 c}}\oplus(M_1 c^\perp)_{\varphi\!\upharpoonright_{M_1 c^\perp}} = (M_1)_\varphi$ one has, for all $x,y \in \bigcup\{M_1 p\,|\,p \in N^p; \mathrm{Tr}_N(p) < +\infty\}$, $E_L(y^* v_\lambda x) \longrightarrow 0$ $\sigma$-strongly and hence $E_1(y^* v_\lambda x) = E_1(E_L(y^* v_\lambda x)) \longrightarrow 0$ $\sigma$-strongly. Hence we can apply Theorem \ref{T-4.3} with the above $\varphi$ and $\mathfrak{M}_1 := \bigcup\{pM_1 p\,|\,p \in N^p; \mathrm{Tr}_N(p)<+\infty\}$. Note here that $\mathfrak{M}_1$ is indeed a $*$-algebra thanks to the Kaplansky formula \cite[Theorem 6.1.7]{KadisonRingrose:Book2} and dense in any von Neumann algebra topology due to the semifiniteness of $\mathrm{Tr}_N$. 
\end{proof} 
  
\begin{corollary}\label{C-4.5} Assume that $(M_1,E_1)$ is one of the following{\rm:} {\rm (i)} $M_1 = N\rtimes_\alpha G$ and $E_1$ is the canonical conditional expectation from $M_1 = N\rtimes_\alpha G$ onto $N$, where $\alpha : G \curvearrowright N$ is an infinite discrete group action preserving a faithful normal state $\psi$ on $N$. {\rm(ii)} $M_1 = Q\bar{\otimes}N$ and $E_1 = \psi\bar{\otimes}\mathrm{id}_N$, where $Q$ is a diffuse von Neumann algebra with a faithful normal state $\psi$. Assume also that $M_2 \supseteq N$ is entirely non-trivial. Then all the assertions of Theorem \ref{T-4.3} holds with $\varphi = \psi\circ E_1$ in {\rm(i)} and with $\varphi = \varphi_0\bar{\otimes}\chi$ in {\rm(ii)}, where $Q_{\varphi_0}$ is diffuse {\rm(}such a state $\varphi_0$ certainly exists{\rm)} and $\chi$ arbitrary.    
\end{corollary}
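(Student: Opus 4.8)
The plan is to reduce the statement to Theorem~\ref{T-4.3}. Since $M_2 \supseteq N$ is already assumed entirely non-trivial, it suffices in each of the two cases to exhibit, for the stated choice of $\varphi$, a dense (in every von Neumann algebra topology) $*$-subalgebra $\mathfrak{M}_1 \subseteq M_1$ with $E_1(\mathfrak{M}_1) \subseteq \mathfrak{M}_1$ together with a net $v_\lambda$ of unitaries in $(M_1)_\varphi$ such that $E_1(y^* v_\lambda x) \to 0$ $\sigma$-strongly for all $x,y \in \mathfrak{M}_1$. Note that Theorem~\ref{T-4.3} imposes no compatibility between $\varphi$ and $E_1$ beyond $\varphi$ being a faithful normal state, so in case (ii) it is legitimate to use $\varphi_0 \bar{\otimes} \chi$ even though this need not be $E_1$-preserving.

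For case (i), take $\varphi = \psi \circ E_1$. Because $\alpha$ preserves $\psi$, the modular automorphism group of $\psi\circ E_1$ restricts to $\sigma^\psi$ on $N$ and fixes every canonical unitary $u_g$, whence $u_g \in (M_1)_\varphi$ for all $g \in G$. I would let $\mathfrak{M}_1$ be the algebraic crossed product, i.e.\ the $*$-algebra of finite sums $\sum_g a_g u_g$ with $a_g \in N$; it is dense in every von Neumann algebra topology and $E_1(\mathfrak{M}_1) = N \subseteq \mathfrak{M}_1$. For the net, index by the finite subsets $F$ of $G$ ordered by inclusion and choose $g_F \in G \setminus F$ (possible since $G$ is infinite), putting $v_F := u_{g_F}$. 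A direct expansion gives, for $x = \sum_h a_h u_h$ and $y = \sum_k b_k u_k$, $y^* u_g x = \sum_{k,h} \alpha_{k^{-1}}(b_k^*)\,\alpha_{k^{-1}g}(a_h)\,u_{k^{-1}gh}$, so $E_1(y^* u_g x) = 0$ as soon as $g$ lies outside the finite set $\{kh^{-1}\}$; hence $E_1(y^* v_F x)$ is eventually $0$ along the net and Theorem~\ref{T-4.3} applies.

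For case (ii), take $\varphi = \varphi_0 \bar{\otimes} \chi$, where $\varphi_0$ is chosen so that $Q_{\varphi_0}$ is diffuse --- such a state exists by the construction employed in the proof of \cite[Theorem 3.4]{Ueda:AdvMath11} --- and $\chi$ is an arbitrary faithful normal state on $N$. Since $\sigma^{\varphi_0\bar{\otimes}\chi} = \sigma^{\varphi_0} \bar{\otimes} \sigma^\chi$, we have $Q_{\varphi_0} \bar{\otimes} 1 \subseteq (M_1)_\varphi$. As $Q_{\varphi_0}$ is diffuse it contains a sequence of unitaries $w_n$ with $w_n \to 0$ $\sigma$-weakly (e.g.\ the powers of a Haar unitary in a diffuse abelian subalgebra, by Riemann--Lebesgue); set $v_n := w_n \otimes 1 \in (M_1)_\varphi$. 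Take $\mathfrak{M}_1 := \mathrm{span}\{a\otimes n : a \in Q,\, n \in N\}$, the algebraic tensor product, a dense $*$-subalgebra with $E_1(\mathfrak{M}_1) = 1\otimes N \subseteq \mathfrak{M}_1$; then for $x = \sum_i a_i\otimes n_i$ and $y = \sum_j b_j\otimes m_j$ one has $E_1(y^* v_n x) = \sum_{i,j} \psi(b_j^* w_n a_i)\, m_j^* n_i$, and each scalar $\psi(b_j^* w_n a_i)$ tends to $0$ by normality of $c \mapsto \psi(b_j^* c a_i)$, so $E_1(y^* v_n x) \to 0$ in norm. Again Theorem~\ref{T-4.3} applies.

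The whole argument is bookkeeping around Theorem~\ref{T-4.3}; the only points needing a word are the identification of the centralizer of $\psi\circ E_1$ (resp.\ of $\varphi_0\bar{\otimes}\chi$), which is standard modular theory, and, in case (ii), the existence of $\varphi_0$ with diffuse centralizer together with the extraction of a $\sigma$-weakly null sequence of unitaries from a diffuse algebra --- but the latter is just diffuseness plus Riemann--Lebesgue, and the passage to $\sigma$-strong convergence of $E_1(y^* v_n x)$ is automatic since that expression is a finite sum of null scalars against fixed elements of $N$. So I do not expect any genuine obstacle here: all the substance lies in Theorem~\ref{T-4.3}, which has already been established.
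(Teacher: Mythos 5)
Your proof is correct and follows essentially the same route as the paper: the same choices of $\varphi$ and $\mathfrak{M}_1$ in both cases, with the vanishing net taken from $L(G)\subseteq (M_1)_{\psi\circ E_1}$ in case (i) and from $Q_{\varphi_0}\bar{\otimes}\mathbb{C}1$ in case (ii). The only cosmetic difference is that in case (i) the paper invokes the diffuseness of $L(G)$ to produce a weakly null net of unitaries, whereas you use the group unitaries $u_{g_F}$ themselves (a particular such net, making $E_1(y^*v_Fx)$ eventually zero); both computations are the same.
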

\begin{proof} Case (i): Since $\psi$ is invariant under the action $\alpha$, the restriction $(\psi\bar{\otimes}\mathrm{id}_{B(\ell^2(G))})\!\upharpoonright_{N\rtimes_\alpha G}$ gives a faithful normal conditional expectation from $E_\psi : M_1 = N\rtimes_\alpha G \rightarrow L(G) = \mathbb{C}1\rtimes G$, and it is plain to see that $\psi\circ E_1 := \tau_G\circ E_\psi$ with the canonical tracial state $\tau_G$ on $L(G)$. Clearly $L(G) = \mathbb{C}1\rtimes G$ sits inside $(N\rtimes_\alpha G)_{\psi\circ E_1}$ and is diffuse (see e.g.~\cite[Proposition 5.1]{Dykema:DukeMathJ93}). With $\varphi := \psi\circ E_1 = \tau_G\circ E_\psi$ and $\mathfrak{M}_1 := \mathrm{span}\{ x\lambda_g\,|\, x \in N, g \in G\}$ one can choose a net $v_\lambda$ from $L(G) = \mathbb{C}\rtimes G$ as in Theorem \ref{T-4.3}, since $L(G)$ is diffuse and $E_1\!\upharpoonright_{L(G)=\mathbb{C}1\rtimes G} = \tau_G(-)1$. 

Case (ii): As in the proof of \cite[Theorem 2.4]{Ueda:AdvMath11} one can choose a faithful normal state $\varphi_0$ on $Q$ in such a way that the centralizer $Q_{\varphi_0}$ is diffuse. Set $\varphi := \varphi_0\bar{\otimes}\chi$ with a faithful normal state $\chi$ on $N$ and $\mathfrak{M}_1 := Q\odot N = \mathrm{span}\{ x\otimes y\,|\, x \in Q, y \in N\}$. Then one can choose a net $v_\lambda$ from $Q_{\varphi_0}\bar{\otimes}\mathbb{C}1$ as in Theorem \ref{T-4.3}, since $Q_{\varphi_0}$ is diffuse.    
\end{proof} 

The next lemma seems well-known, but we do give it for the reader's convenience as a reference for the discussions below.  

\begin{lemma}\label{L-4.6} Let $(P,F) = (P_1,F_1)\star_Q(P_2,F_2)$ be an amalgamated free product. If a projection $f \in Q$ has $c_f^Q = 1$, then $(fPf,F\!\upharpoonright_{fPf}) = (fP_1 f,F_1\!\upharpoonright_{fP_1 f})\star_{fQf}(fP_2 f,F_2\!\upharpoonright_{fP_2 f})$ holds canonically. 
\end{lemma}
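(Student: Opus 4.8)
\emph{Plan.} The plan is to check that the triple $fQf \subseteq fP_kf \subseteq fPf$ ($k=1,2$), together with the conditional expectation $F\!\upharpoonright_{fPf}$, satisfies the three defining properties of the amalgamated free product recalled in Section~2, so that, by the characterization of amalgamated free products recorded there, $(fPf, F\!\upharpoonright_{fPf})$ \emph{is} (canonically) the amalgamated free product $(fP_1f, F_1\!\upharpoonright_{fP_1f})\star_{fQf}(fP_2f, F_2\!\upharpoonright_{fP_2f})$. First I would record the routine facts: $F\!\upharpoonright_{fPf}$ is a faithful normal conditional expectation onto $fQf = fF(P)f$ (pull $f$ out of $F$ using $f \in Q$), and likewise $F_k\!\upharpoonright_{fP_kf} : fP_kf \to fQf$ is a faithful normal conditional expectation with $F\!\upharpoonright_{fP_kf} = F_k\!\upharpoonright_{fP_kf}$ (from $F\!\upharpoonright_{P_k}=F_k$); this is property (ii). Since the amalgamated free product $(fP_1f, F_1\!\upharpoonright_{fP_1f})\star_{fQf}(fP_2f, F_2\!\upharpoonright_{fP_2f})$ exists, it then remains to verify properties (i) and (iii) for $(fPf, F\!\upharpoonright_{fPf})$, and I write $R := fP_1f \vee fP_2f \subseteq fPf$.

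\emph{Property (iii).} A short computation with $f \in Q$ gives $\mathrm{Ker}(F_k\!\upharpoonright_{fP_kf}) = f\,\mathrm{Ker}(F_k)\,f$, and each element $faf$ with $a \in \mathrm{Ker}(F_k)$ in fact lies in $\mathrm{Ker}(F_k)$ since $F_k(faf) = fF_k(a)f = 0$. Hence an arbitrary alternating word in $\mathrm{Ker}(F_1\!\upharpoonright_{fP_1f})$ and $\mathrm{Ker}(F_2\!\upharpoonright_{fP_2f})$, written as $(fa_1f)(fa_2f)\cdots(fa_nf)$ with $a_\ell \in \mathrm{Ker}(F_{k_\ell})$ and $k_1 \neq k_2 \neq \cdots \neq k_n$, is literally an element of $\Lambda^\circ(\mathrm{Ker}(F_1),\mathrm{Ker}(F_2))$, so $F$ annihilates it by property (iii) of $(P,F)$. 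This establishes property (iii) for $(fPf, F\!\upharpoonright_{fPf})$.

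\emph{Property (i), the main point.} Here is where $c_f^Q = 1$ enters; I claim $fPf = R$. Since $c_f^Q = 1$, a standard exhaustion via the comparison theorem in $Q$ produces partial isometries $\{v_i\}_{i\in I} \subseteq Q$ with the $v_iv_i^*$ pairwise orthogonal, $\sum_{i\in I} v_iv_i^* = 1$, and $v_i^*v_i \leq f$; the last condition yields the key identities $v_i f = v_i$ and $fv_i^* = v_i^*$. Then, for $x_1,\dots,x_n$ chosen from $P_1 \cup P_2$, I would insert $\sum_{i\in I} v_iv_i^* = 1$ between consecutive letters to obtain
\[
f x_1 x_2 \cdots x_n f \;=\; \sum_{i_1,\dots,i_{n-1}\in I} (f x_1 v_{i_1})(v_{i_1}^* x_2 v_{i_2})\cdots(v_{i_{n-1}}^* x_n f),
\]
the (iterated, possibly infinite) sum converging $\sigma$-strongly since the partial sums of $\sum_i v_iv_i^*$ increase $\sigma$-strongly to $1$ and multiplication is jointly $\sigma$-strongly continuous on bounded sets. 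Using $v_i f = v_i$, $fv_i^* = v_i^*$ and $v_i \in Q \subseteq P_k$, each factor $v_{i_{\ell-1}}^* x_\ell v_{i_\ell}$ (with the convention $v_{i_0}^* = v_{i_n} = f$) satisfies $v_{i_{\ell-1}}^* x_\ell v_{i_\ell} = f(v_{i_{\ell-1}}^* x_\ell v_{i_\ell})f \in fP_kf \subseteq R$ whenever $x_\ell \in P_k$; hence every finite partial sum lies in $R$, and, $R$ being $\sigma$-weakly closed, $f x_1 \cdots x_n f \in R$. A Kaplansky density argument then upgrades this to $fPf = f(P_1\vee P_2)f \subseteq R$, while the reverse inclusion $R \subseteq fPf$ is trivial, so $R = fPf$. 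I expect the only mildly delicate point to be the bookkeeping and convergence of these iterated insertions — but it is routine — the conceptual content being entirely that $c_f^Q = 1$ supplies the family $\{v_i\}$ and that $v_i^*v_i \leq f$ drops the endpoint factors $fx_1v_{i_1}$ and $v_{i_{n-1}}^*x_nf$ back into the corners $fP_kf$.
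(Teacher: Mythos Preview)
Your proof is correct and follows essentially the same route as the paper's: reduce to checking generation of $fPf$ by $fP_1f$ and $fP_2f$, produce partial isometries $v_i\in Q$ from $c_f^Q=1$ via comparison, and insert $\sum_i v_iv_i^*=1$ between consecutive letters of a word to land each factor in a corner $fP_kf$. The only cosmetic differences are that the paper states the free independence (your property~(iii)) as ``clear'' without writing out the word-by-word check, and that its partial isometries satisfy $\sum_i v_i^*v_i=1$, $v_iv_i^*\leq f$ (the transpose of your convention), so its inserted identity reads $\sum_i v_i^*v_i$ rather than $\sum_i v_iv_i^*$.
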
 
\begin{proof} Clearly $fP_1 f$ and $fP_2 f$ are freely independent with respect to $F\!\upharpoonright_{fPf}$, and hence it suffices to see that those generate $fPf$ as von Neumann algebra. As in the proof of Lemma \ref{L-4.2} one can find partial isometries $\{v_i\}_{i\in I}$ in $Q$ such that $\sum_{i\in I}v_i^* v_i = c_f^Q = 1$ and $v_i v_i^* \leq f$ for all $i \in I$. For any alternating word $x = x_1 \cdots x_n \in \Lambda^\circ(P_1^\circ,P_2^\circ)$ one has $fxf = \sum_{i_1,\dots,i_{n-1} \in I} (f x_1 v_{i_1}^*) (v_{i_1} x_2 v_{i_2}^*) \cdots (v_{i_{n-1}} x_n f)$ $\sigma$-strongly, which falls in the $\sigma$-strong closure of the linear span of $\Lambda^\circ((fP_1 f)^\circ, (fP_2 f)^\circ))$. Since $P$ is the $\sigma$-strong closure of $Q + \mathrm{span}\Lambda^\circ(P_1^\circ, P_2^\circ)$,  the assertion is immediate. 
\end{proof}    

\begin{lemma}\label{L-4.7} Let $P \supseteq Q$ be an inclusion of $\sigma$-finite von Neumann algebras with a faithful normal conditional expectation $E_Q : P \rightarrow Q$, and assume that $Q$ is commutative. 

{\rm(1)} If $P$ has no type I direct summand and a faithful normal semifinite trace $\mathrm{Tr}_P$ on $P$ with $\mathrm{Tr}_P\circ E_Q = \mathrm{Tr}_P$, then there is a faithful normal state $\chi$ on $Q$ so that for each $n \in \mathbb{N}$ with $n \geq 2$ one can find a unitary $u_n \in P_{\chi\circ E_Q}$ in such a way that $E_Q(u_n^k) = 0$ for all $1\leq k \leq n-1$, i.e., $E_Q(u_n^{k_1}{}^*\,u_n^{k_2}) = 0$ for all $0 \leq k_1 \neq k_2 \leq n-1$. 

{\rm(2)} If $P$ is diffuse and $Q$ is atomic, then there is a faithful normal state $\varphi$ on $P$ such that 
\begin{itemize} 
\item[(a)] the centralizer $P_\varphi$ contains $Q$, 
\item[(b)] there are two unitaries $u, v \in P_\varphi$ so that $E_Q(u^k) = E_Q^\varphi(v^k) = 0$ as long as $k\neq 0$, i.e., $E_Q(u^{k_1}{}^*\,u^{k_2}) = E_Q^\varphi(v^{k_1}{}^*\,v^{k_2}) = 0$ for all $k_1 \neq k_2$. Here $E_Q^\varphi$ denotes the unique $\varphi$-preserving conditional expectation from $P$ onto $Q$ whose existence follows from {\rm(a)} and Takesaki's criterion. 
\end{itemize}

{\rm(3)} Let $z \in \mathcal{Z}(P)$ be the central support projection of the type I direct summand of $P$. Assume that $P$ is diffuse and $Qz$ atomic. Then there is a faithful normal state $\varphi$ on the continuous core $\widetilde{P}$ of $P$ such that 
\begin{itemize}
\item[(a)] the centralizer $(\widetilde{P})_\varphi$ contains $\widetilde{Q}$, where $\widetilde{Q} = Q \rtimes_{\sigma^\chi}\mathbb{R} \hookrightarrow \widetilde{P} = P\rtimes_{\sigma^{\chi\circ E_Q}}\mathbb{R}$ with a faithful normal state or semifinite weight $\chi$ on $Q$, 
\item[(b)] for each $n \in \mathbb{N}$ with $n \geq 2$ one can find a unitary $u_n \in (\widetilde{P})_\varphi$ in such a way that $\widetilde{E}_Q(u_n^k) = E_{\widetilde{Q}}^\varphi(u_n^k) = 0$ for all $1 \leq k \leq n-1$, i.e., $\widetilde{E}_Q(u_n^{k_1}{}^*\,u_n^{k_2}) = E_{\widetilde{Q}}^\varphi(v_n^{k_1}{}^*\,v_n^{k_2}) = 0$ for all $0 \leq k_1 \neq k_2 \leq n-1$. Here $\widetilde{E}_Q = (E_Q\bar{\otimes}\mathrm{id}_{B(L^2(\mathbb{R}))})\!\upharpoonright_{\widetilde{P}}$, and $E_{\widetilde{Q}}^\varphi$ denotes the unique $\varphi$-preserving conditional expectation from $\widetilde{P}$ onto $\widetilde{Q}$ as in {\rm(2)}.   
\end{itemize}  
The same assertion also holds for $P \supseteq Q$ with $E_Q$ themselves, if it is further assumed that $P$ is semifinite and $E_Q$ preserves a faithful normal semifinite trace $\mathrm{Tr}_P$ on $P$. 
\end{lemma}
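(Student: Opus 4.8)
The plan is to treat all three parts by one device: build the ambient faithful normal state (on $P$ in (1)--(2), on $\widetilde P$ in (3)) as a direct sum over a suitable decomposition of $Q$, so that the prescribed commutative subalgebra sits inside its centralizer, and then produce the required unitaries blockwise as ``Haar--type'' unitaries for the relevant conditional expectation, exploiting that the blocks of $P$ are either diffuse or have no type I direct summand. It is convenient to note that, for a fixed $n$, it suffices to find mutually orthogonal projections $p_0,\dots,p_{n-1}$ in the centralizer with $\sum_kp_k=1$ and $E_Q(p_k)=\tfrac1n1_Q$: the ``clock unitary'' $u_n:=\sum_{k=0}^{n-1}e^{2\pi ik/n}p_k$ then lies in the centralizer, is unitary with $u_n^n=1$, and satisfies $E_Q(u_n^j)=\tfrac1n\big(\sum_ke^{2\pi ijk/n}\big)1_Q=0$ for $1\le j\le n-1$ (the value at $j=n$ being irrelevant). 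Equivalently, in (2)--(3) one produces genuine Haar unitaries, all of whose nonzero powers are $E_Q$- (resp.\ $\widetilde E_Q$- and $E^\varphi_{\widetilde Q}$-) orthogonal to the subalgebra.

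For (2), write $Q=\bigoplus_{i\in I}\mathbb Cq_i$ with $q_i$ the minimal projections. Each corner $q_iPq_i$ is diffuse, hence admits a faithful normal state $\varphi_i$ with diffuse centralizer, the fact already used in the proof of Corollary~\ref{C-4.5} (cf.\ \cite[Theorem 2.4]{Ueda:AdvMath11}). Choosing $\lambda_i>0$ with $\sum_i\lambda_i=1$ and setting $\varphi:=\sum_i\lambda_i\varphi_i(q_i(\,\cdot\,)q_i)$ gives a faithful normal state with $Q\subseteq P_\varphi$, $\varphi|_{q_iPq_i}=\lambda_i\varphi_i$, and $(q_iPq_i)_{\varphi_i}\subseteq P_\varphi$. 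Atomicity of $Q$ makes both conditional expectations diagonal: $E_Q(x)=\sum_i\psi_i(q_ixq_i)q_i$ for faithful normal states $\psi_i$, and $E_Q^\varphi(x)=\sum_i\varphi_i(q_ixq_i)q_i$. Inside each diffuse algebra $(q_iPq_i)_{\varphi_i}$ pick a diffuse abelian subalgebra; the restriction of $\psi_i$ (resp.\ $\varphi_i$) to it is given by a non-atomic probability measure, which carries a random variable uniform on $[0,1)$, and exponentiating it yields a unitary $u_i$ (resp.\ $v_i$, in a possibly different diffuse abelian subalgebra) with $\psi_i(u_i^k)=0$ (resp.\ $\varphi_i(v_i^k)=0$) for $k\neq0$. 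Then $u:=\sum_iu_i$ and $v:=\sum_iv_i$ lie in $P_\varphi$ and satisfy $E_Q(u^k)=0$, $E_Q^\varphi(v^k)=0$ for $k\neq0$.

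For (1), the hypothesis $\mathrm{Tr}_P\circ E_Q=\mathrm{Tr}_P$ yields, by a short exhaustion argument, that $\mathrm{Tr}_Q:=\mathrm{Tr}_P|_Q$ is faithful normal semifinite and $\mathrm{Tr}_P=\mathrm{Tr}_Q\circ E_Q$. Pick a countable orthogonal family $\{e_i\}\subseteq Q^p$ with $\sum_ie_i=1$ and $0<\mathrm{Tr}_Q(e_i)<\infty$, pick distinct $\lambda_i>0$ with $\sum_i\lambda_i\mathrm{Tr}_Q(e_i)=1$, and set $h:=\sum_i\lambda_ie_i$, $\chi:=\mathrm{Tr}_Q(h\,\cdot\,)$; then $\chi$ is a faithful normal state with $\chi\circ E_Q=\mathrm{Tr}_P(h\,\cdot\,)$, so $\sigma_t^{\chi\circ E_Q}=\mathrm{Ad}(h^{it})$ and $P_{\chi\circ E_Q}=\{h\}'\cap P=\bigoplus_ie_iPe_i$, with each $e_iPe_i$ a \emph{finite} von Neumann algebra (its unit $e_i$ has $\mathrm{Tr}_P(e_i)=\mathrm{Tr}_Q(e_i)<\infty$) of no type I direct summand, on which $E_Q$ restricts to a trace-preserving conditional expectation onto $Qe_i$. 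In this finite, trace-symmetric situation one produces, for each $n$, orthogonal projections $p_k^{(i)}\in e_iPe_i$ with $\sum_kp_k^{(i)}=e_i$ and $E_Q(p_k^{(i)})=\tfrac1ne_i$ (a Lyapunov/carpenter-type statement for conditional expectations onto a commutative subalgebra of an algebra without type I summand, in the circle of ideas of \cite{Kadison:AmerJMath84}); then $p_k:=\sum_ip_k^{(i)}\in P_{\chi\circ E_Q}$ feed the clock unitary above.

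For (3) and the final assertion: in $\widetilde P=P\rtimes_{\sigma^{\chi\circ E_Q}}\mathbb R\supseteq\widetilde Q$ one has $\mathrm{Tr}_{\widetilde P}=\mathrm{Tr}_{\widetilde Q}\circ\widetilde E_Q$, so $\widetilde E_Q$ is trace-preserving, and $z\in\mathcal Z(\widetilde P)$ because the modular group fixes $\mathcal Z(P)$. Split $\widetilde P=\widetilde{P(1-z)}\oplus\widetilde{Pz}$: on $\widetilde{P(1-z)}$ (no type I summand) run the argument of (1), noting that since the ambient state there has inner modular group and $u_n$ is built from centralizer projections, $E^\varphi_{\widetilde Q}(u_n^j)=0$ accompanies $\widetilde E_Q(u_n^j)=0$ automatically; on $\widetilde{Pz}$ (type I, with $Pz$ diffuse and $Qz$ atomic) one combines the constructions of (1) and (2)---working over the atoms of $Qz$ as in (2) and, inside each resulting corner, using a clock/Haar unitary in the centralizer as in (1)---to obtain data inside the right centralizer on which both $\widetilde E_Q$ and $E^\varphi_{\widetilde Q}$ are scalar traces; taking $\varphi$ and $u_n$ to be the orthogonal sums over the two parts proves (3). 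The final assertion follows by running the same argument directly inside the already-semifinite $P$, with $\chi$ a faithful normal state on $Q$ and $\mathrm{Tr}_P$ in place of the core trace, so that no passage to the core is needed. The main obstacle is precisely the Lyapunov/carpenter-type step used in (1) (and on the type I part in (3))---producing orthogonal projections summing to the unit with prescribed constant conditional-expectation values in an algebra without type I summand, after first arranging via the choice of $\chi$ that the ambient centralizer is a direct sum of finite corners---together with the bookkeeping in (3) of relating the $z$- and $(1-z)$-parts to $\widetilde Q$ and tracking both conditional expectations through the type decomposition.
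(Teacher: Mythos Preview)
Your treatment of (2) is essentially the paper's: decompose along the atoms of $Q$, choose blockwise faithful normal states with diffuse centralizers, and build Haar unitaries in each block for the two block-states $\psi_i$ and $\varphi_i$.

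For (1) your route differs from the paper's, and the difference matters. You want orthogonal projections $p_0,\dots,p_{n-1}$ in the (type II$_1$) centralizer with $E_Q(p_k)=\tfrac1n$, so that the \emph{clock} unitary $\sum_k e^{2\pi ik/n}p_k$ has vanishing $E_Q$ on its first $n-1$ powers. That existence statement is a genuine Schur--Horn/carpenter theorem for conditional expectations onto a commutative subalgebra of a II$_1$ algebra; it is true, but it is not what \cite{Kadison:AmerJMath84} provides. The paper avoids this entirely by using a \emph{shift} unitary instead: pick a MASA $\mathfrak A$ in $P_{\chi\circ E_Q}$ containing $Q$, use Kadison's diagonalization \cite[Corollary 3.16]{Kadison:AmerJMath84} to find $n$ mutually equivalent projections $e_0,\dots,e_{n-1}\in\mathfrak A$ summing to $1$, and let $u_n$ cyclically permute them. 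Then $e_j u_n^k e_j=0$ for $1\le k\le n-1$, so $E_{\mathfrak A}(u_n^k)=0$, and since $E_Q=E_Q\circ E_{\mathfrak A}$ one gets $E_Q(u_n^k)=0$ with no carpenter step. Your clock approach is not wrong, but the reference you give does not justify it, and the paper's shift approach is strictly easier.

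For (3) the global split along $z$ matches, but the paper organizes the two conditional expectations via an intermediate algebra $R=Q\vee\{z\}''=Qz\oplus Qz^\perp$ with $E_R:P\to R$ satisfying $E_Q\circ E_R=E_Q$: the state $\varphi$ is built so that $\varphi\circ\widetilde{E}_R=\varphi$, whence both $\widetilde E_Q$ and $E^\varphi_{\widetilde Q}$ factor through $\widetilde E_R$, and one only needs $\widetilde E_R(u_n^k)=0$. On the type I part $Pz$ the paper does \emph{not} pass to the core and combine (1)--(2); it works directly in $Pz$, using that each corner $e_m(Pz)e_m$ has \emph{diffuse center} (type I diffuse), so a Haar unitary for the block-state $\psi_m$ can be taken in $\mathcal Z(e_m(Pz)e_m)$, and then checks by hand that this unitary lies in the centralizer of the chosen state on $\widetilde{Pz}$. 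Your sketch ``combine (1) and (2) in $\widetilde{Pz}$'' is too vague here---note in particular that $\widetilde{Qz}$ is diffuse, not atomic, so (2) does not apply to $\widetilde{Pz}\supseteq\widetilde{Qz}$ as stated---and your claim that $E^\varphi_{\widetilde Q}(u_n^j)=0$ follows ``automatically'' on the non-type-I part is correct only because there your $\varphi$ equals $\chi\circ\widetilde E_Q$, hence $E^\varphi_{\widetilde Q}=\widetilde E_Q$; say so.
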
 
\begin{proof} (1) By assumption $\mathrm{Tr}_P\!\upharpoonright_Q$ is semifinite, and thus one can choose an orthogonal sequence $\{q_m\}_m$ of projections in $Q$ with $\mathrm{Tr}_P(q_m) < +\infty$ and $\sum_{m\in\mathbb{N}} q_m = 1$. Consider the faithful normal state $\chi := \sum_{m\in\mathbb{N}} \frac{1}{2^m \mathrm{Tr}_P(q_m)} \mathrm{Tr}_P\!\upharpoonright_{Qq_m}$ on $Q$. (Remark here that $Q$ is commutative.) Clearly the centralizer $P_{\chi\circ E_Q}$ contains $\sum^\oplus_{m\in\mathbb{N}} q_m P q_m$ $\big(\supseteq \sum^\oplus_{m\in\mathbb{N}} Qq_m = Q\big)$ so that $P_{\chi\circ E_Q}$ must be of type II$_1$. Choose a MASA $\mathfrak{A}$ in $P_{\chi\circ E_Q}$ that contains $Q$. By \cite[Corollary 3.16]{Kadison:AmerJMath84}, for each $n \in \mathbb{N}$ with $n \geq 2$ there are $n$ orthogonal $e_0,\dots,e_{n-1} \in \mathfrak{A}^p$, all of which are equivalent in $P_{\chi\circ E_Q}$, and $\sum_{i=0}^{n-1} e_i = 1$. Then one can construct a unitary $u_n \in P_{\chi\circ E_Q}$ such that $u_n e_0 = e_1 u_n, u_n e_1 = e_2 u_n, \dots, u_n e_{n-1} = e_0 u_n$. Let $E_{\mathfrak{A}} : P \rightarrow \mathfrak{A}$ be the $\chi\circ E_Q$-preserving conditional expectation (whose existence follows from Takesaki's criterion), and clearly $E_Q\circ E_{\mathfrak{A}} = E_Q$. Then, for every $1 \leq k \leq n-1$ one has $E_{\mathfrak{A}}(u_n^k) = 0$ so that $E_Q(u_n^k)= E_Q(E_{\mathfrak{A}}(u_n^k)) = 0$. 

(2) Write $Q = \sum_{m \in \mathbb{N}}^\oplus \mathbb{C}q_m$. Clearly $E_Q$ factors as  
$P \overset{E_{Q'\cap P}}{\longrightarrow} Q'\cap P \overset{\Psi}{\longrightarrow} Q$, where $Q'\cap P = \sum_{m\in\mathbb{N}}^\oplus q_m P q_m$ and $E_{Q'\cap P}(x) = \sum_{m \in \mathbb{N}} q_m x q_m$ for $x \in P$. Moreover $\Psi$ is of the form $\Psi(\sum_{m\in\mathbb{N}} x_m) = \sum_{m \in \mathbb{N}} \psi_m(x_m)q_m$ for $x_m \in q_m P q_m$ with faithful normal states $\psi_m$ on $q_m P q_m$. Since $P$ is diffuse, so are all $q_m P q_m$; hence by the proof of \cite[Theorem 3.4]{Ueda:AdvMath11} there are faithful normal states $\varphi_m$ on $q_m P q_m$ with $(q_m P q_m)_{\varphi_m}$ diffuse for all $m$. Define $\Phi(\sum_{m\in\mathbb{N}} x_m) = \sum_{m\in\mathbb{N}}\varphi_m(x_m)q_m$ for $x_m \in q_m P q_m$, giving a faithful normal conditional expectation from $Q'\cap P$ onto $Q$. Set $\varphi := \chi\circ\Phi\circ E_{Q'\cap P}$, a faithful normal state on $P$, with a faithful normal state $\chi$ on $Q$. Then $Q' \cap P_{\varphi} = \sum_{m\in\mathbb{N}}^\oplus (q_m P q_m)_{\varphi_m}$, a direct sum of diffuse von Neumann algebras. One can choose, for each $m$, unitaries $u_m, v_m \in (q_m P q_m)_{\varphi_m}$ so that $\varphi_m(u_m^k) = \psi_m(v_m^k) = 0$ as long as $k \neq 0$. (See the proof of \cite[Theorem 3.7]{Ueda:AdvMath11}.) Then $u := \sum_{m\in\mathbb{N}} u_m$, $v := \sum_{m\in\mathbb{N}} v_m$ are unitaries in $Q' \cap P_\varphi$, and moreover $E_Q(u^k) = \Psi(u^k) = 0$ and $E_Q^\varphi(v^k) = \Phi(v^k) = 0$ as long as $k \neq 0$. 

(3) Consider $P = Pz \oplus Pz^\perp \supseteq R := Q\vee\{z\}'' = Qz\oplus Qz^\perp \supseteq Q$. Let $\chi$ be an arbitrary faithful normal state on $Q$. As in the proof of Corollary \ref{C-4.4} one can show that there is a unique faithful normal conditional expectation $E_R : P \rightarrow R$ with $E_Q\circ E_R = E_Q$. Then we have 
\begin{equation*}
\widetilde{P} = P\rtimes_{\sigma^{\chi\circ E_Q}}\mathbb{R} \overset{\widetilde{E}_R}{\supseteq} \widetilde{R} = R\rtimes_{\sigma^{\chi\circ(E_Q\!\upharpoonright_{R})}}\mathbb{R} \overset{\widetilde{E_Q\!\upharpoonright_R}}{\supseteq} \widetilde{Q} = Q \rtimes_{\sigma^\chi}\mathbb{R},  
\end{equation*}  
where $\widetilde{E}_R = (E_R\bar{\otimes}\mathrm{id}_{B(L^2(\mathbb{R}))})\!\upharpoonright_{\widetilde{P}}$ and 
$\widetilde{{E_Q}\!\upharpoonright_R} = 
(({E_Q}\!\upharpoonright_R)\bar{\otimes}\mathrm{id}_{B(L^2(\mathbb{R}))})\!\upharpoonright_{\widetilde{R}} = 
{\widetilde{E}_Q}\!\upharpoonright_{\widetilde{R}}$. Since $E_R = ({E_R}\!\upharpoonright_{Pz})\oplus({E_R}\!\upharpoonright_{Pz^\perp})$ in $P = Pz\oplus Pz^\perp$, we have, by \cite[Theorem X.1.7 (ii)]{Takesaki:Book2}, 
\begin{equation*} 
\Big(\widetilde{P} \overset{\widetilde{E}_R}{\supseteq} \widetilde{R}\Big) \cong 
\Big(\widetilde{Pz} \overset{\widetilde{{E_R}\!\upharpoonright_{Pz}}}{\supseteq} \widetilde{Qz}\Big)\oplus \Big(\widetilde{Pz^\perp} \overset{\widetilde{{E_R}\!\upharpoonright_{Pz^\perp}}}{\supseteq} \widetilde{Qz^\perp}\Big),   
\end{equation*} 
where the continuous cores and the conditional expectations in the right-hand side are defined similarly as above. Since $\widetilde{Pz^\perp}$ has no type I direct summand by the assumption here and \cite[Theorem XII.1.1]{Takesaki:Book2} and since $\widetilde{E_R\!\upharpoonright_{Pz^\perp}}$ preserves the canonical trace on $\widetilde{Pz^\perp}$ see e.g.~\cite[\S4]{Longo:CMP89}, we can apply (1) to the second $\Big(\widetilde{Pz^\perp} \supseteq \widetilde{Qz^\perp}\Big)$ with $\widetilde{E_R\!\upharpoonright_{Pz^\perp}}$ directly, and get a faithful normal state $\varphi_{z^\perp}$ on $\widetilde{Pz^\perp}$ with $\varphi_{z^\perp}\circ(\widetilde{E_R\!\upharpoonright_{Pz^\perp}}) = \varphi_{z^\perp}$ such that for each $n \in \mathbb{N}$ with $n \geq 2$ one can find a unitary $u_{z^\perp,n} \in (\widetilde{Pz})_{\varphi_{z^\perp}}$ in such a way that $\widetilde{E}_R(u_{z^\perp,n}^k) = (\widetilde{E_R\!\upharpoonright_{Pz^\perp}})(u_{z^\perp,n}^k) = 0$ for all $1 \leq k \leq n-1$. Write $Qz = \sum_{m\in\mathbb{N}}^\oplus \mathbb{C}e_m$, and $E_R\!\upharpoonright_{Pz}$ factors as $Pz \overset{E_{(Qz)'\cap Pz}}{\longrightarrow} (Qz)'\cap Pz \overset{\Psi}{\longrightarrow} Qz$, where $(Qz)'\cap Pz = \sum_{m\in\mathbb{N}}^\oplus e_m (Pz) e_m$ and $E_{(Qz)'\cap Pz}(x) = \sum_{m \in \mathbb{N}} e_m x e_m$ for $x \in Pz$. Moreover, $\Psi$ is of the form $\Psi(\sum_{m\in\mathbb{N}} x_m) = \sum_{m \in \mathbb{N}} \psi_m(x_m)e_m$ for $x_m \in e_m(Pz)e_m$ with faithful normal states $\psi_m$ on $e_m(Pz)e_m$. By the assumption here $Pz$ is diffuse and of type I, and thus so are the $e_m(Pz)e_m$; hence the centers of those must be diffuse, and so are all the $(e_m(Pz)e_m)_{\psi_m}$. In the same way as in (2), one can find a unitary $u_z \in ((Qz)'\cap Pz)_{\chi_z \circ \Psi}$ with `any' faithful normal state $\chi_z$ on $Qz$ in such a way that $\Psi(u_z^k) = 0$ for all $k \neq 0$. Denote by $\lambda(t)$ the generators of $\mathbb{C}\rtimes\mathbb{R}$ in $\widetilde{Pz} = (Pz)\rtimes_{\sigma^{\chi_z\circ(E_R\!\upharpoonright_{Pz})}}\mathbb{R}$ ($\hookleftarrow (Qz)\rtimes_{\sigma^{\chi_z}}\mathbb{R} = \widetilde{Qz}$ canonically), and set $\varphi_z := \tau\circ(\widetilde{E_R\!\upharpoonright_{Pz}})$, a faithful normal state on $\widetilde{Pz}$, with a fixed faithful normal tracial state $\tau := \chi_z \bar{\otimes}\tau_0$ on $\widetilde{Qz} = Qz\bar{\otimes}\lambda(\mathbb{R})''$. Note that $\lambda(t) u_z = \sigma_t^{\chi_z\circ(E_R\!\upharpoonright_{Pz})}(u_z)\lambda(t) = u_z\lambda(t)$ for all $t \in \mathbb{R}$. Thus, for any finite sum $x = \sum_k x_k \lambda(t_k) \in \widetilde{Pz}$ with $x_k \in Pz$ we have $\varphi_z(u_z x) = \sum_k \tau(\Psi(u_z E_{(Qz)'\cap Pz}(x_k))\lambda(t_k)) 
= \sum_k \chi_z(\Psi(u_z E_{(Qz)'\cap Pz}(x_k)))\tau_0(\lambda(t_k)) = \sum_k \chi_z(\Psi(E_{(Qz)'\cap Pz}(x_k)u_z))\tau_0(\lambda(t_k)) = \sum_k \varphi_z(x_k u_z \lambda(t_k)) = \varphi_z(x u_z)$. It follows that $u_z$ falls in $(\widetilde{Pz})_{\varphi_z}$. Clearly $\widetilde{E}_R(u_z^k) = (\widetilde{E_R\!\upharpoonright_{Pz}})(u_z^k) = E_R(u_z^k) = \Psi(u_z^k) = 0$ for all $k \neq 0$. Set $\varphi(x) := \frac{1}{2}(\varphi_z(xz) + \varphi_{z^\perp}(xz^\perp))$ for $x \in \widetilde{P}$, and then $\varphi$ becomes a faithful normal state on $\widetilde{P}$ and satisfies $\varphi\circ \widetilde{E}_R = \varphi$, implying the desired condition (a), since $\widetilde{R}$ is commutative. For each $n \in \mathbb{N}$ with $n \geq 2$ we define the unitary $u_n := u_z \oplus u_{z^\perp,n} \in \widetilde{Pz}\oplus\widetilde{Pz^\perp} = \widetilde{P}$, and thus $\widetilde{E}_R(u_n^k) = (\widetilde{E_R\!\upharpoonright_{Pz}})(u_z^k)\oplus(\widetilde{E_R\!\upharpoonright_{Pz^\perp}})(u_{z^\perp,n}^k) = 0$ for all $1 \leq k \leq n-1$. Hence the desired condition (b) is immediate as in (1) from the fact that $\widetilde{E}_Q = \widetilde{E}_Q\circ\widetilde{E}_R$ and $E^\varphi_{\widetilde{Q}} = E^\varphi_{\widetilde{Q}}\circ\widetilde{E}_R$ (the latter follows from $\varphi\circ \widetilde{E}_R = \varphi$). The final assertion is shown in the exactly same way (but easier) as above.       
\end{proof} 

We will give two applications of Proposition \ref{P-3.5}. The latter is a straightforward generalization of both \cite[Theorem 3.7]{Ueda:AdvMath11} and \cite[Proposition 3.1]{Ueda:MRL}. Remark that the former reproves the assertions (1), (4) in Corollary \ref{C-4.4} without any use of the technologies provided in \S\S3.1--3.2. 

\begin{theorem}\label{T-4.8} Assume that $M_1$ diffuse, $N$ of type I and $M_2 \supseteq N$ entirely non-trivial. Let $z \in \mathcal{Z}(N)$ be the unique projection such that $Nz$ is diffuse and $Nz^\perp$ atomic, and assume further that $(M_1)c_z^{M_1}$ has no type I direct summand when $z \neq 0$ {\rm(}i.e., this last assumption is fulfilled if $M_1$ has no type I direct summand{\rm)}.  Then $(\widetilde{M})_\omega = \big(\widetilde{M}\big)'\cap \big(\widetilde{M}\big)^\omega = \big(\widetilde{M}\big)'\cap\mathcal{Z}(\widetilde{N})^\omega$. In particular, $\widetilde{M}$ and hence $M$ itself are non-amenable. If $M$ is additionally assumed to be semifinite, then $M_\omega = M' \cap M^\omega = M' \cap \mathcal{Z}(N)^\omega$ also holds.  
\end{theorem}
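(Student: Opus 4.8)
The plan is to run everything on the continuous core, where $(\widetilde M,\widetilde E)=(\widetilde M_1,\widetilde E_1)\star_{\widetilde N}(\widetilde M_2,\widetilde E_2)$ by \eqref{Eq-2.5}, and to feed a state produced by Lemma \ref{L-4.7}(3) into Proposition \ref{P-3.5}. Since Lemma \ref{L-4.7}(3) requires the amalgam to be commutative, the first step will be a reduction to that case: as $N$ is of type I one picks an abelian projection $f\in N$ with $c_f^N=1$ and $fNf$ commutative, and, choosing the auxiliary weight $\chi$ on $N$ so that $f\in N_\chi$ (hence $\sigma_t^{\chi\circ E}(f)=f$), the algebras $f\widetilde M f$, $f\widetilde M_k f$, $f\widetilde N f$ become the continuous cores of $fMf$, $fM_k f$, $fN f$, and Lemma \ref{L-4.6} applied to $fMf$ yields $f\widetilde M f=(f\widetilde M_1 f)\star_{f\widetilde N f}(f\widetilde M_2 f)$. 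A short free-independence computation modeled on the proof of Theorem \ref{T-4.3}(4) — using entire non-triviality of $\widetilde M_2\supseteq\widetilde N$ (inherited from $M_2\supseteq N$) together with Lemma \ref{L-4.2}(3) — should give $c_f^{\widetilde M}=1$ (and $c_f^{\widetilde N}=1$ is immediate from $c_f^N=1$), so that $x\mapsto fxf$ carries $(\widetilde M)'\cap(\widetilde M)^\omega$ onto $(f\widetilde M f)'\cap(f\widetilde M f)^\omega$ and $(\widetilde M)'\cap\mathcal Z(\widetilde N)^\omega$ onto $(f\widetilde M f)'\cap\mathcal Z(f\widetilde N f)^\omega$; it then suffices to prove the asserted identity for the reduced amalgam.

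Next I would check that $fM_1 f\supseteq fN f$ and $fM_2 f\supseteq fN f$ meet the hypotheses of Lemma \ref{L-4.7}(3): $fM_1 f$ is diffuse (a corner of a diffuse algebra), $fN f$ is commutative, $fM_2 f\supseteq fN f$ is entirely non-trivial by Lemma \ref{L-4.2}(2), and the one genuine point is that, writing $z'\in\mathcal Z(fM_1 f)\cong\mathcal Z(M_1)$ for the central support of the type I direct summand of $fM_1 f$, the algebra $z'(fN f)z'$ is atomic: the hypothesis that $(M_1)c_z^{M_1}$ has no type I summand forces $z'\leq z^{\perp}f$, so $z'(fN f)z'$ is a compression of the atomic algebra $f(Nz^{\perp})f$ (vacuous when $z=0$). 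Lemma \ref{L-4.7}(3) then supplies a faithful normal state $\varphi$ on $f\widetilde M_1 f$ with $f\widetilde N f\subseteq(f\widetilde M_1 f)_\varphi$ and, for every $n\ge 2$, a unitary $u_n\in(f\widetilde M_1 f)_\varphi$ with $\widetilde E_1(u_n^{k_1}{}^*u_n^{k_2})=E^\varphi_{f\widetilde N f}(u_n^{k_1}{}^*u_n^{k_2})=0$ for $0\le k_1\ne k_2\le n-1$; taking $u_k^{(n)}=v_k^{(n)}=u_n^k$ these are precisely conditions (a) and (b) of Proposition \ref{P-3.5} for $f\widetilde M f=(f\widetilde M_1 f)\star_{f\widetilde N f}(f\widetilde M_2 f)$.

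The heart of the proof is then short. Take $x\in(f\widetilde M f)'\cap(f\widetilde M f)^\omega$; it commutes with $(f\widetilde M_1 f)_\varphi$, so Proposition \ref{P-3.5} applies, and with the constant sequence $t_m\equiv 0$ (so $z=y$) its inequality becomes $\Vert\widetilde E_2(y^*y)^{1/2}(x-(E_{f\widetilde M_1 f})^\omega(x))\Vert\le\Vert yx-xy\Vert=0$ for every $y\in(f\widetilde M_2 f)^\circ$ (norms in $(\varphi\circ E_{f\widetilde M_1 f})^\omega$). Picking, by Lemma \ref{L-4.2}(3), a family $\{y_i\}\subseteq(f\widetilde M_2 f)^\circ$ with $\sum_i s(\widetilde E_2(y_i^*y_i))=f$ and passing to adjoints and support projections yields $x=(E_{f\widetilde M_1 f})^\omega(x)\in(f\widetilde M_1 f)^\omega$. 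Writing $x=\widetilde E_1^\omega(x)+x^\circ$ with $x^\circ\in\ker\widetilde E_1^\omega$ and using that $x$ commutes with $(f\widetilde M_2 f)^\circ$, the free independence of $f\widetilde M_1 f$ and $f\widetilde M_2 f$ over $f\widetilde N f$ inside $(f\widetilde M f)^\omega$, together with the defining property of $E_{f\widetilde M_1 f}$ (exactly the orthogonal decomposition at the start of the proof of Proposition \ref{P-3.5}), forces $yx^\circ=0$ for all such $y$, and one more use of Lemma \ref{L-4.2}(3) gives $x^\circ=0$. Hence $x\in(f\widetilde N f)^\omega=\mathcal Z(f\widetilde N f)^\omega$ (commutativity of $f\widetilde N f$ is used here), so $(f\widetilde M f)'\cap(f\widetilde M f)^\omega=(f\widetilde M f)'\cap\mathcal Z(f\widetilde N f)^\omega$; un-cutting via $c_f^{\widetilde M}=1$ gives $(\widetilde M)'\cap(\widetilde M)^\omega=(\widetilde M)'\cap\mathcal Z(\widetilde N)^\omega$, the equality with $(\widetilde M)_\omega$ being the standard identification of the asymptotic centralizer.

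For the remaining assertions: $M$ is not of type I — as one sees by cutting to $fM_1 f\star_{fN f}fM_2 f$ with $fM_1 f$ diffuse and $fM_2 f\supsetneq fN f$ and invoking standard facts on (amalgamated) free products — hence an amenable $\widetilde M$ would be a hyperfinite, semifinite, non-type-I algebra and would contain a copy of the hyperfinite II$_1$ factor in its asymptotic centralizer, contradicting the abelianness of $\mathcal Z(\widetilde N)^\omega$; this yields non-amenability of $\widetilde M$ and of $M$. When $M$ is moreover semifinite one has, by Theorem \ref{T-4.3}(3) (whose proof uses only Proposition \ref{P-3.5}), a faithful normal semifinite trace $\mathrm{Tr}_N$ on $N$ with $\mathrm{Tr}_N\circ E_k$ tracial, and the same scheme — now invoking the last assertion of Lemma \ref{L-4.7}(3), which builds $\varphi$ directly on $fM_1 f$ — gives $M_\omega=M'\cap M^\omega=M'\cap\mathcal Z(N)^\omega$. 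I expect the main obstacle to be structural rather than analytic: the analytic content is the single application of Proposition \ref{P-3.5} with a constant sequence, whereas justifying the reduction by $f$ (full central support of $f$ in $\widetilde M$, and the identification of $f\widetilde M f$ with the continuous core of $fMf$) and, above all, tracking that the type I part of the corner $fM_1 f$ sits over $z^{\perp}$ so that the atomicity hypothesis of Lemma \ref{L-4.7}(3) is met, is where the care will be needed.
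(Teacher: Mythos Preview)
Your proposal is correct and follows essentially the same route as the paper: reduce by an abelian $f\in N$ with full central support, verify the atomicity hypothesis of Lemma~\ref{L-4.7}(3) on the corner (tracking the type~I part of $fM_1f$ against $z^\perp$), feed the resulting state into Proposition~\ref{P-3.5} with constant $t_m$, conclude via Lemma~\ref{L-4.2}(3) and free independence, and un-cut. Three small corrections are worth noting. First, $c_f^{\widetilde M}=1$ follows immediately from $c_f^{\widetilde N}=1$ since $\widetilde N\subseteq\widetilde M$ unitally; no free-independence computation is needed. Second, the paper draws the family $\{y_i\}$ from $(fM_2 f)^\circ\subset(f\widetilde M_2 f)^\circ$, so only Lemma~\ref{L-4.2}(2) for the original inclusion $M_2\supseteq N$ is required and one never has to verify entire non-triviality of $\widetilde M_2\supseteq\widetilde N$. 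Third, your parenthetical that Theorem~\ref{T-4.3}(3) ``uses only Proposition~\ref{P-3.5}'' is incorrect: its proof goes through part (2), hence (0), hence Proposition~\ref{P-3.3}; accordingly the paper cites Corollary~\ref{C-4.4}(3) (which rests on Propositions~\ref{P-3.1} and~\ref{P-3.3}) to obtain the trace $\mathrm{Tr}_N$ with $\mathrm{Tr}_N\circ E_k$ tracial in the semifinite case.
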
 

After the completion of the main part of the present work we learned that Houdayer and Vaes  have also independently been obtained a similar (but not same) result as above under different assumptions with different (and simpler) methods. See \cite[Theorem 5.8]{HoudayerVaes:Preprint12}. More on this will be discussed at the end of this section. 

\begin{proof} Note that $(\widetilde{N} \supseteq N) \cong (N\bar{\otimes}\lambda(\mathbb{R})'' \supseteq N\bar{\otimes}\mathbb{C}1)$. Since $N$ is of type I, one can choose an abelian $f \in N^p$ ($\subset \widetilde{N}^p$) with $c_f^{\widetilde{N}} = 1$. Let us first prove: 
\begin{equation}\label{Eq-4.1}
f\big(\widetilde{N}\big)^\omega f = \mathcal{Z}(\widetilde{N})^\omega f.  
\end{equation}
For each $x \in \widetilde{N}^\omega$ with representative $(x(m))_m$ one has $fxf = [(fx(m)f)_m]$, and for every $m$ there is a unique $z(m) \in \mathcal{Z}(\widetilde{N})$ with $fx(m)f=z(m)f$. By $c_f^{\widetilde{N}} = 1$ the mapping $x' \in \widetilde{N}' \mapsto x' f \in \widetilde{N}' f$ gives a bijective normal $*$-homomorphism (thus $\Vert-\Vert_\infty$-preserving), and hence $(z(m))_m$ defines $z \in \mathcal{Z}(\widetilde{N})^\omega$. Consequently we get $fxf = zf \in \mathcal{Z}(\widetilde{N})^\omega f$. 

By Lemma \ref{L-4.6} together with \eqref{Eq-2.5} we have the identification 
\begin{equation}\label{Eq-4.2} 
\big(\widetilde{fMf}, \widetilde{E\!\upharpoonright_{fMf}}\big) = \big(\widetilde{fM_1 f}, \widetilde{E_1\!\upharpoonright_{fM_1 f}}\big)\star_{\widetilde{fNf}} \big(\widetilde{fM_2 f}, \widetilde{E_2\!\upharpoonright_{fM_2 f}}\big). 
\end{equation}  
Let $c \in \mathcal{Z}(M_1)$ be the central support projection of the type I direct summand of $M_1$. Then $e = cf$ is that of $f M_1 f$ too, and $fNf e = \mathcal{Z}(N)fe$ must be atomic (or $0$ if $e = 0$) by the assumption here. In fact, if this was not the case, then $\mathcal{Z}(N) c_e^N \cong \mathcal{Z}(N)e = \mathcal{Z}(N)fe$ is not atomic, and hence $z c_e^N \neq 0$, i.e., $ze \neq 0$, implying $c_z^{M_1} c \geq z c \geq ze \neq 0$, a contradiction to that $M_1 c_z^{M_1}$ has no type I direct summand. Therefore, by Lemma \ref{L-4.7} (3) we can apply Proposition \ref{P-3.5} to \eqref{Eq-4.2} and thus any $x \in \big(\widetilde{fMf}\big)' \cap \big(\widetilde{fMf}\big)^\omega$ and any $y \in \big(\widetilde{fM_2 f}\big)^\circ$ must satisfy that
\begin{equation}\label{Eq-4.3}  
(\widetilde{E_2\!\upharpoonright_{fM_2 f}})(y^* y)(x-(\widetilde{E_{fM_1 f}})^\omega(x)) = 0,  
\end{equation} 
where $E_{fM_1 f}$ is the unique conditional expectation from $fMf$ onto $fM_1 f$ determined as \eqref{Eq-2.2}. Note that $\widetilde{fM_2 f} \supseteq \widetilde{fNf}$ with $\widetilde{E_2\!\upharpoonright_{fM_2 f}}$ contains $fM_2 f \supseteq fNf$ with $E_2\!\upharpoonright_{f M_2 f}$ canonically. Hence, by Lemma \ref{L-4.2} (2), (3) one can find a family $\{y_i\}_{i \in I}$ in $(f M_2 f)^\circ$ in such a way that $\sum_{i\in I} s(E_2(y_i^* y_i)) = f$ ($=1_{fNf}$). Therefore, it follows from \eqref{Eq-4.3} as in the proof of Theorem \ref{T-4.3} that $x = \big(\widetilde{E_{fM_1 f}}\big)^\omega(x) \in \big(\widetilde{fM_1 f}\big)^\omega$. Consequently $\big(\widetilde{fMf}\big)' \cap \big(\widetilde{fMf}\big)^\omega = \big(\widetilde{fMf}\big)' \cap \big(\widetilde{fM_1 f}\big)^\omega$. In the same way as in the proof of Theorem \ref{T-4.3}, we see, by using the above $\{y_i\}_{i\in I}$ again and the free independence between $\big(\widetilde{fM_1 f}\big)^\omega$ and $\big(\widetilde{fM_2 f}\big)^\omega$, that 
\begin{equation}\label{Eq-4.4} 
\big(\widetilde{fMf}\big)' \cap \big(\widetilde{fMf}\big)^\omega = \big(\widetilde{fMf}\big)' \cap \big(\widetilde{fNf}\big)^\omega.
\end{equation}   
Choose a faithful normal semifinite trace $\mathrm{Tr}_N$ on $N$, and $\widetilde{M} \supseteq \widetilde{M}_k\, (k=1,2)\, \supseteq \widetilde{N}$ are realized as 
$\widetilde{M} = M\rtimes_{\sigma^{\mathrm{Tr}_N\circ E}}\mathbb{R} \supseteq \widetilde{M}_k = M_k \rtimes_{\sigma^{\mathrm{Tr}_N\circ E_k}}\mathbb{R} \supseteq \widetilde{N} = N\rtimes_{\sigma^{\mathrm{Tr}_N}}\mathbb{R}$. Since $\sigma_t^{\mathrm{Tr}_N\circ E}(f) = f$ for all $t \in \mathbb{R}$, $f\widetilde{M}f \supseteq f\widetilde{M}_k f \supseteq f\widetilde{N}f$ are naturally identified with $\widetilde{fMf} \supseteq \widetilde{fM_k f} \supseteq \widetilde{fNf}$. Hence \eqref{Eq-4.4} and \eqref{Eq-4.1} imply that  
\begin{align}\label{Eq-4.5} 
\big(\widetilde{M}\big)'f \cap f\big(\widetilde{M}\big)^\omega f 
= \big(\widetilde{M}\big)'f \cap f\widetilde{N}^\omega f = \big(\widetilde{M}\big)'f \cap \mathcal{Z}(\widetilde{N})^\omega f.
\end{align} 

Let $\pi_f$ be the normal surjective $*$-homomorphism $x \in \big(\widetilde{M}\big)'\cap\big(\widetilde{M}\big)^\omega \mapsto xf \in \big(\widetilde{M}'\cap\big(\widetilde{M}\big)^\omega\big)f = \big(\widetilde{M}\big)'f \cap (f\big(\widetilde{M}\big)^\omega f)$ (c.f.~\cite[Lemma 4.1 (i)]{Voeden:PLMS73}), which is also injective due to $c_f^{\widetilde{N}} = 1$ (and hence $c_f^{\widetilde{M}} = 1$ too). By \eqref{Eq-4.5} we have $\big(\widetilde{M}\big)'\cap\big(\widetilde{M}\big)^\omega = \pi_f^{-1}\big(\big(\widetilde{M}\big)'f \cap \mathcal{Z}(\widetilde{N})^\omega f\big)$. As in the proof of Lemma \ref{L-4.2} one can choose partial isometries $\{v_i\}_{i\in I}$ in $\widetilde{N}$ so that $\sum_{i\in I}v_i^* v_i = c_f^{\widetilde{N}} = 1$ and $v_i v_i^* \leq f$ for all $i \in I$. Then, if $x = zf \in \big(\widetilde{M}\big)'f\cap\mathcal{Z}(\widetilde{N})^\omega f$ with $z \in \mathcal{Z}(\widetilde{N})^\omega$, then we have 
$yz = \sum_{i_1,i_2\in I} v_{i_1}^* v_{i_1} y z v_{i_2}^* v_{i_2} = \sum_{i_1,i_2\in I} v_{i_1}^* (v_{i_1} y v_{i_2}^*) x v_{i_2} = \sum_{i_1,i_2\in I} v_{i_1}^*  x (v_{i_1} y v_{i_2}^*) v_{i_2} = \sum_{i_1,i_2\in I} v_{i_1}^* v_{i_1} zy v_{i_2}^* v_{i_2} = zy$ for $y \in \widetilde{M}$, implying $z \in \big(\widetilde{M}\big)'\cap\mathcal{Z}(\widetilde{N})^\omega$. Hence $\big(\widetilde{M}\big)'f \cap \mathcal{Z}(\widetilde{N})^\omega f = \big(\big(\widetilde{M}\big)'\cap\mathcal{Z}(\widetilde{N})^\omega\big)f$. Consequently $\big(\widetilde{M}\big)'\cap\big(\widetilde{M}\big)^\omega = \pi_f^{-1}\big(\big(\widetilde{M}\big)'f \cap \mathcal{Z}(\widetilde{N})^\omega f\big) = \pi_f^{-1}\big(\big(\big(\widetilde{M}\big)'\cap\mathcal{Z}(\widetilde{N})^\omega\big)f\big) = \big(\widetilde{M}\big)'\cap\mathcal{Z}(\widetilde{N})^\omega$. Since $\big(\widetilde{M}\big)'\cap\big(\widetilde{M}\big)^\omega = \big(\widetilde{M}\big)'\cap\mathcal{Z}(\widetilde{N})^\omega$ is commutative, it must equal $(\widetilde{M})_\omega$ as observed in \cite[(8) in page 360]{Ueda:TAMS03}. 

The final assertion is also shown in the exactly same way as above by using the final assertion in Lemma \ref{L-4.7} (3), since there is a faithful normal semifinite trace $\mathrm{Tr}_N$ on $N$ so that $\mathrm{Tr}_N\circ E_k$ ($k=1,2$) are traces again thanks to Corollary \ref{C-4.4} (3). 
\end{proof}

\begin{remark}\label{R-4.9}{\rm 
The same type argument as in Theorem \ref{T-4.3} (3) works for constructing a faithful normal state $\chi$ on $N$ with $\sigma_T^{\chi\circ E} = \mathrm{Id}$ with $T = -2\pi/\log\lambda$, $0 < \lambda < 1$, when $M$ is known to be a factor of type III$_\lambda$ under the same set of assumptions as in Theorem \ref{T-4.8}. Hence the discrete core of such $M$ can also be written as an amalgamated free product von Neumann algebra of the same form as the continuous core, and an analogous formula for its asymptotic centralizer holds. In particular, the discrete core of such a factor of type III$_\lambda$ is an $\infty$-amplification of a non-strongly stable type II$_1$ factor. Further and more detailed discussions related to this aspect will be given elsewhere. 
}   
\end{remark}
 
\begin{theorem}\label{T-4.10} If $M_1$ is diffuse, $N$ of atomic type I and $M_2 \supseteq N$ entirely non-trivial, then the following hold true{\rm:}
\begin{itemize} 
\item[(1)] $M_\omega = M'\cap M^\omega = M' \cap \mathcal{Z}(N)$ {\rm(}$= \mathcal{Z}(M)${\rm)}. Hence $M$ does never have no type III$_0$ direct summand  {\rm(}see \cite[Theorem 2.12]{Connes:JFA74}{\rm)}, and becomes full in the sense of Connes {\rm \cite{Connes:JFA74}} under the separability of preduals.  
\item[(2)] The Connes $\tau$-invariant $\tau(M)$ {\rm(}see {\rm \cite{Connes:JFA74}}{\rm)} is determined under the separability of preduals as follows. Let $\chi$ be a faithful normal state on $N$. Then $t_m \longrightarrow 0$ in $\tau(M)$ as $m\rightarrow \infty$ if and only if there is a unitary $w \in N$ so that $\sigma_{t_m}^{\chi\circ E} \longrightarrow \mathrm{Ad}w$ in $\mathrm{Aut}(M)$ as $m \rightarrow \infty$. 
\end{itemize} 
\end{theorem}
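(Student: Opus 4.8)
The plan is to reduce to a corner where Proposition~\ref{P-3.5} applies and then run an amalgamated--freeness argument in the ultrapower, parallel to the proof of Theorem~\ref{T-4.8} but at the level of $M$ itself (which is exactly what lets atomicity of $N$ be exploited). Since $N$ is of atomic type I, choose an abelian projection $f\in N$ with $c_f^N=1$; then $c_f^M=1$, $fM_1f$ is again diffuse, $fNf$ is abelian and atomic, $fM_2f\supseteq fNf$ is entirely non-trivial by Lemma~\ref{L-4.2}(2), and $(fMf,E\!\upharpoonright_{fMf})=(fM_1f,E_1\!\upharpoonright)\star_{fNf}(fM_2f,E_2\!\upharpoonright)$ by Lemma~\ref{L-4.6}. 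Applying Lemma~\ref{L-4.7}(2) to $fM_1f\supseteq fNf$ with $E_1\!\upharpoonright_{fM_1f}$ yields a faithful normal state $\varphi$ on $fM_1f$ with $fNf\subseteq(fM_1f)_\varphi$ and unitaries $u,v\in(fM_1f)_\varphi$ with $(E_1\!\upharpoonright)(u^k)=E_{fNf}^\varphi(v^k)=0$ for $k\neq0$; taking $u_k:=u^k,\ v_k:=v^k$ shows that conditions (a),(b) of Proposition~\ref{P-3.5} hold for $fMf$. For $x\in(fMf)'\cap(fMf)^\omega$ and $y\in(fM_2f)^\circ$, Proposition~\ref{P-3.5} with $t_m\equiv0$ (so $z=y$ and $yx-xz=yx-xy=0$) forces $E_2(y^*y)^{1/2}\bigl(x-(E_{fM_1f})^\omega(x)\bigr)=0$; summing over a family $\{y_i\}\subseteq(fM_2f)^\circ$ with $\sum_i s(E_2(y_i^*y_i))=f$ from Lemma~\ref{L-4.2}(2),(3) gives $x\in(fM_1f)^\omega$, and the freeness computation from the proof of Theorem~\ref{T-4.3}(1) (using free independence of $(fM_1f)^\omega$ and $(fM_2f)^\omega$ over $(fNf)^\omega$) then pushes $x$ into $(fNf)^\omega=fNf$, atomicity giving $(fNf)^\omega=fNf$. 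Thus $(fMf)'\cap(fMf)^\omega\subseteq fNf$. Finally the blow-up $\pi_f\colon x\in M'\cap M^\omega\mapsto xf$ is an injective normal $*$-homomorphism into $(fMf)'\cap(fMf)^\omega$ (because $c_f^M=1$, cf.\ the proof of Theorem~\ref{T-4.8}), so $M'\cap M^\omega$ is abelian, hence equals $M_\omega$ by \cite{Ueda:TAMS03}; chasing elements back through $\pi_f$ with partial isometries $v_i\in N$ ($\sum v_i^*v_i=1$, $v_iv_i^*\leq f$) as in Theorem~\ref{T-4.8} identifies $M'\cap M^\omega$ with $M'\cap\mathcal Z(N)=\mathcal Z(M)$, using Corollary~\ref{C-4.4} (Theorem~\ref{T-4.3}(1) applies since $N$ is type I with trivial diffuse part). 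The two displayed consequences then follow from \cite[Theorem~2.12]{Connes:JFA74} and the definition of fullness (under separability of the predual).

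\textbf{Part (2).} By part (1), $M$ is full, so (separable predual) $\mathrm{Inn}(M)$ is closed in $\mathrm{Aut}(M)$, $\mathrm{Out}(M)$ is a Polish group, and $t_m\to0$ in $\tau(M)$ exactly when $[\sigma_{t_m}^\phi]\to[\mathrm{id}]$ in $\mathrm{Out}(M)$ for one (equivalently every) faithful normal state $\phi$ on $M$; the direction ``$\Leftarrow$'' is then immediate since the quotient map $\mathrm{Aut}(M)\to\mathrm{Out}(M)$ is continuous. For ``$\Rightarrow$'' I keep the corner reduction of part (1), rename $fMf\rightsquigarrow M$ etc., and work with $\phi_1:=\varphi\circ E_{M_1}$, for which $(M_1)_\varphi\subseteq M_{\phi_1}$; since $\tau$ is state-independent one may compute it with $\phi_1$, and since $\sigma_t^{\phi_1}$ and $\sigma_t^{\chi\circ E}$ differ by $\mathrm{Ad}$ of the Connes cocycle $[D\varphi:D\chi\circ E_1]_t\in M_1$ the $\chi\circ E$-version of the statement follows from the $\phi_1$-version. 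Assume $t_m\to0$ in $\tau(M)$; Connes' theory supplies unitaries $w_m\in M$ with $\sigma_{t_m}^{\phi_1}\circ\mathrm{Ad}(w_m)\to\mathrm{id}$ in $\mathrm{Aut}(M)$. Form the $\phi_1^\omega$-preserving automorphism $\Sigma$ of $M^\omega$ carrying $[(a_m)_m]$ to $[(\sigma_{t_m}^{\phi_1}(a_m))_m]$ (well defined as noted just before the proof of Proposition~\ref{P-3.5}), and put $w:=[(w_m)_m]$, $W:=\Sigma(w)$; then $W^*xW=[(\sigma_{t_m}^{\phi_1}(x))_m]$ for all $x\in M$, and since $(M_1)_\varphi\subseteq M_{\phi_1}$ this gives $W\in\bigl((M_1)_\varphi\bigr)'\cap M^\omega$. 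The point is now to push $W$ into $N$: apply Proposition~\ref{P-3.5} to $W$ with the very same sequence $(t_m)$. There the element $z$ equals $[(\sigma_{t_m}^{\varphi\circ E_{M_1}}(y))_m]=[(\sigma_{t_m}^{\phi_1}(y))_m]=W^*yW$, hence $yW=W z$ and $yW-Wz=0$; so $E_2(y^*y)^{1/2}\bigl(W-(E_{M_1})^\omega(W)\bigr)=0$ for every $y\in M_2^\circ$, which (summing over the family $\{y_i\}$ as in part (1)) forces $W\in M_1^\omega$, and the amalgamated-freeness argument of part (1) then pushes $W$ into $N^\omega=N$ --- atomicity of $N$ being essential. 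Since $N\subseteq M_{\phi_1}$ we get $w=\Sigma^{-1}(W)=W\in N$, so $w_m\to W$ $\sigma$-strongly$^*$ along $\omega$, whence $\mathrm{Ad}(w_m)\to\mathrm{Ad}(W)$ and $\sigma_{t_m}^{\phi_1}\to\mathrm{Ad}(W^*)$ along $\omega$ with $W^*\in N^u$; undoing the corner reduction and feeding this back into Connes' description of $\tau(M)$ produces a unitary $w\in N$ with $\sigma_{t_m}^{\chi\circ E}\to\mathrm{Ad}w$ in $\mathrm{Aut}(M)$, which is the asserted characterization.

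The main obstacle is precisely the control of the correcting unitary inside the ultrapower in part (2): Proposition~\ref{P-3.5} does the essential work by producing, from the modular flow $\sigma^{\phi_1}$, the element $z$ that cancels $yW$, after which the only substantive inputs are the amalgamated freeness of $M_1^\omega$ and $M_2^\omega$ over $N^\omega$ and the collapse $N^\omega=N$ coming from atomicity; the residual difficulty is the routine but careful transfer between ultrafilter limits, genuine convergence in $\mathrm{Aut}(M)$, and the corner reduction.
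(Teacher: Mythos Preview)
Your Part (1) is correct and is exactly the paper's argument: reduce to an abelian-projection corner via Lemma~\ref{L-4.6}, feed Lemma~\ref{L-4.7}(2) into Proposition~\ref{P-3.5}, run the freeness computation of Theorem~\ref{T-4.3}(1) in the ultrapower, and use atomicity of $\mathcal Z(N)$ to collapse the ultrapower.

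Part (2) has a genuine gap at the sentence ``the amalgamated-freeness argument of part (1) then pushes $W$ into $N^\omega$.'' In part (1) the element in question \emph{commutes} with $M_2$, so for $y\in M_2^\circ$ one has $yx=xy$, and the word-type orthogonality coming from free independence of $M_1^\omega$ and $M_2^\omega$ over $N^\omega$ forces $x-E_1^\omega(x)=0$. Your $W$ satisfies instead $yW=W\Sigma(y)$, and for $y\in M_2^\circ$ the element $\sigma_{t_m}^{\phi_1}(y)=[D\varphi:D\chi\circ E_1]_{t_m}\,\sigma_{t_m}^{\chi\circ E_2}(y)\,[D\varphi:D\chi\circ E_1]_{t_m}^*$ lies only in the closure of $M_1 M_2^\circ M_1$, so $\Sigma(y)\notin(M_2^\circ)^\omega$ and the orthogonality pattern used in part (1) is destroyed. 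Writing $c:=[\,([D\varphi:D\chi\circ E_1]_{t_m})_m\,]\in M_1^\omega$ and $\hat y:=[(\sigma_{t_m}^{\chi\circ E_2}(y))_m]\in(M_2^\circ)^\omega$ one gets $y(Wc)=(Wc)\hat y$, and \emph{now} the freeness argument applies---but it yields $Wc\in N^\omega$, not $W\in N^\omega$; since $c\notin N^\omega$ in general, your conclusion $W\in N$ does not follow and the subsequent chain $w=\Sigma^{-1}(W)=W\in N$ collapses.

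The paper's proof is organized precisely to avoid this. It starts from unitaries $u_m$ with $\mathrm{Ad}\,u_m\circ\sigma_{t_m}^{\chi\circ E}\to\mathrm{id}$ (so that conjugation by $u$ uses $\sigma^{\chi\circ E}$, which \emph{does} preserve $M_2^\circ$), and introduces the Connes cocycle $v_m=[D\chi\circ E_1:D\hat\varphi]_{t_m}\in M_1$ so that $u_iv_i$ lies in $((M_1)_\varphi)'\cap M^\omega$; Proposition~\ref{P-3.5} then gives $u_iv_i\in M_1^\omega$, one peels off $v_i\in M_1^\omega$ to obtain $u_i\in M_1^\omega$, and only then does the freeness step---now with the correct relation $yu_i=u_i[(\sigma_{t_m}^{\chi\circ E_2}(y))_m]$---push $u_i$ into $N^\omega$. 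In your notation this amounts to recognizing that the object landing in $N$ is $Wc$ (which equals $u$), not $W$. The paper also does not reduce to a single corner but works with a family of $\chi$-invariant abelian projections $f_i$ with $\sum_i e_i=1$, which is what allows one to reassemble a genuine unitary in $N$ at the end; your ``undoing the corner reduction'' is too vague here, since a single abelian $f$ need not be $\sigma^\chi$-invariant.
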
 
\begin{proof} (1) This is proved along the same line as in the proof of Theorem \ref{T-4.8} by using only Lemma \ref{L-4.7} (2) instead together with a well-known fact $\mathcal{Z}(N) = \mathcal{Z}(N)^\omega$ due to the assumption that it is atomic. 

(2) We can write $N = \sum_{i \in I}^\oplus B(\mathcal{H}_i)$. Looking at this structure with the given $\chi$ we can choose a collection $\{e_i\}_{i\in I}$ of abelian projections in $N$ with $\sum_{i\in I} e_i = 1$ such that for each $i \in I$ there is a larger abelian $f_i \in N^p$ so that $e_i \leq f_i$, $c_{f_i}^N = 1$ and $\sigma_t^\chi(f_i) = f_i$ ($t \in \mathbb{R}$). Assume that $t_m \longrightarrow 0$ in $\tau(M)$ as $m \rightarrow \infty$. Then there is a sequence $(u_m)_m$ of unitaries in $M$ such that $\mathrm{Ad}u_m\circ\sigma_{t_m}^{\chi\circ E} \longrightarrow \mathrm{id}$ in $\mathrm{Aut}(M)$ as $m\rightarrow \infty$. As observed in the proof \cite[Proposition 3.1]{Ueda:MRL} the $(u_m)_m$ defines a unitary $u \in M^\omega$, and clearly $u f_i = f_i u$ for all $i \in I$. Hence $f_i u$ defines a unitary in $f_i M^\omega f_i = (f_i M f_i)^\omega$, and we denote it by $u_i$ for simplicity. Since $f_i M_1 f_i$ is still diffuse, looking at $f_i M_1 f_i \supseteq f_i N f_i = \mathcal{Z}(N)f_i$ one can choose a faithful normal state $\varphi$ on $f_i M_1 f_i$ as in Lemma \ref{L-4.7} (2). Set $\hat{\varphi}(x) := \varphi(f_i x f_i) + \chi\circ E_1(f_i^\perp x f_i^\perp)$, $x \in M_1$, which becomes a faithful normal positive linear functional on $M_1$. Clearly $f_i \in (M_1)_{\hat{\varphi}}$ and thus $f_i [D\chi\circ E_1:D\hat{\varphi}]_t = [D\chi\circ {E_1}\!\upharpoonright_{f_i M_1 f_i} : D\varphi]_t$ for all $t \in \mathbb{R}$ by the uniqueness part of Connes's Radon-Nikodym cocycle theorem. As observed in the proof of \cite[Proposition 3.1]{Ueda:MRL} again the sequence $v_m := [D\chi\circ E_1 : D\hat{\varphi}]_{t_m}$ defines a unitary $v \in M_1^\omega$ and also the sequence $f_i v_m = v_m f_i$ does a unitary $v_i \in f_i M_1^\omega f_i = (f_i M_1 f_i)^\omega$. Since $\hat{\varphi}\circ {E_{M_1}}\!\upharpoonright_{f_i M f_i} = \varphi\circ({E_{M_1}}\!\upharpoonright_{f_i M f_i})$, we have $yu_i v_i = yuv = \big[(yu_m v_m)_m\big] = \big[ (u_m v_m \sigma_{t_m}^{\varphi\circ({E_{M_1}}\!\upharpoonright_{f_i M f_i})}(y))_m\big] = uvz = u_i v_i z$ for $y \in (f_i M_2 f_i)^\circ$ with $z=\big[(\sigma_{t_m}^{\varphi\circ({E_{M_1}}\!\upharpoonright_{f_i M f_i})}(y))_m\big] \in (f_i M f_i)^\omega = f_i M^\omega f_i$ in the identification $(f_i M f_i, E\!\upharpoonright_{f_i M f_i}) = (f_i M_1 f_i, {E_1}\!\upharpoonright_{f_i M_1 f_i}) \star_{f_i N f_i} (f_i M_2 f_i, {E_2}\!\upharpoonright_{f_i M_2 f_i})$ provided by Lemma \ref{L-4.6}. By Proposition \ref{P-3.5} we get $({E_2}\!\upharpoonright_{f_i M_2 f_i})(y^* y)(u_i v_i - ({E_{M_1}}\!\upharpoonright_{f_i M f_i})^\omega(u_i v_i)) = 0$ for $y \in (f_i M_2 f_i)^\circ$. By using Lemma \ref{L-4.2} (2), (3) twice as in the proof of Theorem \ref{T-4.8} we can prove firstly that $u_i v_i \in (f_i M_1 f_i)^\omega = f_i M_1^\omega f_i$, secondly that $u_i \in f_i M_1^\omega f_i$ (since $v_i \in f_i M_1^\omega f_i$), and finally that $u_i \in f_i N^\omega f_i = \mathcal{Z}(N)^\omega f_i = \mathcal{Z}(N)f_i$. Therefore, $u = \sum_{i \in I} e_i u = \sum_{i \in I} e_i f_i u = \sum_{i\in I} e_i u_i \in N$. Letting $w := u^* \in N^u$ we have $\mathrm{Ad}w^*\circ\sigma_{t_m}^{\chi\circ E} \longrightarrow \mathrm{id}$ in $\mathrm{Aut}(M)$ as $m\rightarrow\infty$.  
\end{proof}

The next proposition shows that Proposition \ref{P-3.5} is still useful beyond the case where $N$ is of type I or even semifinite. The proof goes along the same line as that of Theorem \ref{T-4.8} but is easier than it. Hence the proof is left to the reader. 

\begin{proposition}\label{P-4.11} Assume that there is a faithful normal state $\varphi$ on $M_1$ satisfying the following conditions{\rm:} 
\begin{itemize} 
\item[(a)] $\sigma_t^\varphi(N) = N$ for all $t \in \mathbb{R}$.  
\item[(b)] For every $n \in \mathbb{N}$ with $n \geq 2$ there are unitaries $u_k = u_k^{(n)}
,v_k = v_k^{(n)} \in (M_1)_\varphi$, $0 \leq k \leq n-1$, such that $E_1(u_{k_1}^* u_{k_2}) = E_N^\varphi(v_{k_1}^* v_{k_2}) = 0$ for all $0 \leq k_1 \neq k_2 \leq n-1$, where $E_N^\varphi$ denotes the unique $\varphi$-preserving conditional expectation from $M_1$ onto $N$, whose existence follows from {\rm(a)} and Takesaki's criterion.  
\end{itemize} 
Assume also that $M_2 \supseteq N$ is entirely non-trivial. Then $M' \cap M^\omega = M' \cap N^\omega$ holds. Moreover, if it is further assumed that $N$ is finite, then $M_\omega = M'\cap M^\omega = M' \cap N_\omega$.  
\end{proposition}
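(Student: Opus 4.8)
The plan is to rerun the argument of the proof of Theorem~\ref{T-4.8} directly on $M$ itself — no corner reduction is needed here, since hypotheses (a), (b) are exactly what Proposition~\ref{P-3.5} requires — and then to handle the finite-$N$ addendum by a short modular-theoretic observation. Concretely, I would first prove $M'\cap M^\omega=M'\cap N^\omega$ (the inclusion $\supseteq$ being trivial). Take $x\in M'\cap M^\omega$; since $(M_1)_\varphi\subseteq M_1\subseteq M$ we have $x\in((M_1)_\varphi)'\cap M^\omega$, so Proposition~\ref{P-3.5} applies. Feeding it the constant sequence $t_m\equiv0$, so that the element $z$ there equals $y$, and using that $xy=yx$ in $M^\omega$ for every $y\in M_2^\circ\subseteq M\subseteq M^\omega$, the right-hand side of the inequality in Proposition~\ref{P-3.5} is $0$; hence $E_2(y^*y)^{1/2}\bigl(x-(E_{M_1})^\omega(x)\bigr)=0$, and therefore $\bigl(x-(E_{M_1})^\omega(x)\bigr)^*s(E_2(y^*y))=0$, for every $y\in M_2^\circ$. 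Choosing by Lemma~\ref{L-4.2}(3) (valid because $M_2\supseteq N$ is entirely non-trivial) a family $\{y_i\}_{i\in I}\subseteq M_2^\circ$ with $\sum_{i\in I}s(E_2(y_i^*y_i))=1$ then gives $x=(E_{M_1})^\omega(x)\in M_1^\omega$. Next, exactly as in the proof of Theorem~\ref{T-4.3}(1), for $y\in M_2^\circ$ the identity $yx=xy$ together with the free independence of $M_1^\omega$ and $M_2^\omega$ over $N^\omega$ with respect to $E^\omega$ forces $y\bigl(x-E_1^\omega(x)\bigr)=0$; taking the $E^\omega$-value of the product of the adjoint of this element with itself and using freeness once more yields $\bigl(x-E_1^\omega(x)\bigr)^*E_2(y^*y)\bigl(x-E_1^\omega(x)\bigr)=0$, hence $\bigl(x-E_1^\omega(x)\bigr)^*s(E_2(y^*y))=0$ for all $y\in M_2^\circ$, so Lemma~\ref{L-4.2}(3) once again gives $x=E_1^\omega(x)\in N^\omega$. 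Thus $M'\cap M^\omega\subseteq N^\omega$, i.e. $M'\cap M^\omega=M'\cap N^\omega$.

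For the ``moreover'' part, assume $N$ is finite and fix a faithful normal tracial state $\mathrm{Tr}_N$ on $N$ (which exists since $N$ is $\sigma$-finite), and put $\psi:=\mathrm{Tr}_N\circ E$, a faithful normal state on $M$. Since $\sigma_t^{\psi}\!\upharpoonright_N=\sigma_t^{\mathrm{Tr}_N}=\mathrm{id}$, Takesaki's criterion applied to $E^\omega:M^\omega\to N^\omega$ with $\psi^\omega=\mathrm{Tr}_N^\omega\circ E^\omega$ shows that $\sigma_t^{\psi^\omega}\!\upharpoonright_{N^\omega}=\mathrm{id}$ for all $t\in\mathbb{R}$; hence $M'\cap M^\omega=M'\cap N^\omega\subseteq N^\omega\subseteq(M^\omega)_{\psi^\omega}$. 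Since $M_\omega=(M'\cap M^\omega)\cap(M^\omega)_{\psi^\omega}$ (see \cite[\S\S2.2]{Ueda:AdvMath11}), this forces $M_\omega=M'\cap M^\omega$. The same triviality of the modular flow on $N^\omega$ gives $N_\omega=N'\cap N^\omega$, and since any element of $M'\cap N^\omega$ commutes with $N\subseteq M$ we have $M'\cap N^\omega\subseteq N'\cap N^\omega=N_\omega$, while $M'\cap N_\omega\subseteq M'\cap N^\omega$ is trivial; hence $M'\cap M^\omega=M'\cap N^\omega=M'\cap N_\omega$, completing the proof.

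The free-probabilistic bookkeeping here is strictly lighter than in Theorem~\ref{T-4.8}; the only slightly delicate point is the last paragraph, where one must pick the trace-based state $\psi=\mathrm{Tr}_N\circ E$ precisely so that the modular flow of $\psi^\omega$ becomes trivial on $N^\omega$ — this is what simultaneously upgrades the relative commutant to the asymptotic centralizer on both the $M$- and the $N$-side. I expect that modular-theoretic step, rather than the free-independence manipulations, to be the main thing to get right.
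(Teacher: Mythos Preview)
Your proposal is correct and follows exactly the route the paper indicates (``along the same line as Theorem~\ref{T-4.8} but easier''): apply Proposition~\ref{P-3.5} with $t_m\equiv0$ to land in $M_1^\omega$, then use the free independence of $M_1^\omega,M_2^\omega$ over $N^\omega$ together with Lemma~\ref{L-4.2}(3) exactly as in Theorem~\ref{T-4.3}(1) to land in $N^\omega$. The paper gives no further detail, and your write-up fills in precisely what was left to the reader. For the ``moreover'' part, your choice of $\psi=\mathrm{Tr}_N\circ E$ is the right one; note that the essential point is simply that any $x\in M'\cap N^\omega$ has a representative $(x_n)$ with $x_n\in N\subseteq M_\psi$, so $x_n\psi=\psi x_n$ exactly, and combined with $x\in M'$ this forces $x\in M_\omega$ --- this is the content behind the characterization you cite from \cite[\S\S2.2]{Ueda:AdvMath11}, and it is also what the paper itself invokes (via \cite[(8) in page 360]{Ueda:TAMS03}) at the analogous step in Theorem~\ref{T-4.8}.
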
 

It is easy to confirm that the $(M_1,E_1)$ in Corollary \ref{C-4.5} satisfies the  assumption of Proposition \ref{P-4.11}. Thus $M'\cap M^\omega = M'\cap N^\omega$ holds under the set of assumptions in Corollary \ref{C-4.5}. 
%

Assume that $M_1$ is a von Neumann algebra with separable predual and that $N$ is a Cartan subalgebra in $M_1$. It was proved in \cite[Lemma 4.2]{Ueda:PacificJMath99} that if $M_1$ is further assumed to be a non-type I factor, then there are a faithful normal state $\varphi$ on $M_1$ with $\varphi\circ E_1 = \varphi$ and a unitary $u \in (M_1)_\varphi$ such that $E_1(u^k) = 0$ as long as $k \neq 0$. The same assertion can indeed be proved even when $M_1$ is further assumed only to have no type I direct summand (i.e., without being a factor). The proof is similar to \cite[Lemma 4.2]{Ueda:PacificJMath99} but tedious based on disintegration. Hence such $(M_1,E_1)$ satisfies the assumption of Proposition \ref{P-4.11}.

\begin{remark}\label{R-4.12} {\rm 
Almost all the results obtained above have appropriate `HNN variants' thanks to tricks given in \cite{Ueda:IllinoisJMath08}. Here it should be emphasized that our results so far essentially need assumptions for only one free component. The notion of HNN extensions of von Neumann algebras as well as their basic properties including their modular theoretic aspects were established in \cite{Ueda:JFA05}.}
\end{remark} 

In closing of this section we discuss one of  Houdayer and Vaes's results \cite[Theorem 5.8]{HoudayerVaes:Preprint12}. This part of the present paper is added after receiving a draft of \cite{HoudayerVaes:Preprint12} in order to point out only one consequence obtained from this and that papers without any new idea. Therefore, some facts provided in \cite{HoudayerVaes:Preprint12} are necessary below. The original aim of the present work is to provide amalgamated free product counterparts of the results in \cite[\S3]{Ueda:AdvMath11}. One issue to do so is how to formulate a suitable assumption saying that $M_1$ is `diffuse relative to $N$' which corresponds to that $M_1$ is diffuse when $N=\mathbb{C}1$. The requirement for $M_1 \supseteq N$ in Theorem \ref{T-4.3} seems to be one strong form of them without any restriction on $N$, but it seems not so easy to check it in general. Thus we propose the requirement for $M_1 \supseteq N$ in Corollary 4.4 and Theorem 4.8 as such a candidate in the special case when $N$ is of type I. However a more sophisticated one in the special case seems to be that $M_1 \supseteq N$ has no trivial corner, which is proposed in \cite[\S5]{HoudayerVaes:Preprint12} by a different motivation. In fact, Houdayer and Vaes \cite[Theorem 5.8]{HoudayerVaes:Preprint12} give a factoriality and non-amenability result under the set of assumptions that both $M_k \supseteq N$, $k=1,2$, have no trivial corner and that $N$ is of type I, and establish their primeness result under the same set of assumptions. Here an inclusion $P \supseteq Q$ of von Neumann algebras is said to have no trivial corner if $pPp \neq Qp$ for any non-zero projection $p \in Q'\cap P$. Any exact general relationship between theirs and ours is not immediately clear. However the proof of Theorem \ref{T-4.8} and general properties on inclusions without trivial corner provided in \cite[\S\S5.1]{HoudayerVaes:Preprint12} altogether immediately give an improvement of \cite[Theorem 5.8]{HoudayerVaes:Preprint12}, though it is not immediately clear whether the primeness result in \cite[Theorem E]{HoudayerVaes:Preprint12} holds or not under the new set of assumptions. 

\begin{theorem}\label{T-4.13} If $M_1 \supseteq N$ has no trivial corner, $N$ is of type I and $M_2 \supseteq N$ entirely non-trivial, then the following hold true{\rm:} 
\begin{itemize} 
\item[(1)] $\mathcal{Z}(M) = \mathcal{Z}(M_1)\cap\mathcal{Z}(M_2)\cap\mathcal{Z}(N)$. 
\item[(2)] $\mathcal{Z}(\widetilde{M}) = \mathcal{Z}(\widetilde{M}_1)\cap\mathcal{Z}(\widetilde{M}_2)\cap\mathcal{Z}(\widetilde{N})$. 
\item[(3)] $(\widetilde{M})_\omega = \big(\widetilde{M}\big)'\cap\big(\widetilde{M}\big)^\omega = \big(\widetilde{M}\big)'\cap\mathcal{Z}(\widetilde{N})^\omega$. 
\end{itemize} 
In particular, {\rm(3)} explains that $M$ does never become amenable.  
\end{theorem}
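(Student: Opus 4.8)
The plan is to re-run, essentially word for word, the proof of Theorem~\ref{T-4.8}; the only genuinely new ingredient is the structure theory of inclusions \emph{without trivial corner} developed in \cite[\S\S5.1]{HoudayerVaes:Preprint12}, which will play the role that Lemma~\ref{L-4.7}~(3) played there. Thus I would first isolate the two facts about ``no trivial corner'' that are needed, then feed them into the Theorem~\ref{T-4.8} machinery, and finally deduce (1) and (2) from the same computation exactly as Theorem~\ref{T-4.3}~(1),(4) were deduced (as in the remark preceding Theorem~\ref{T-4.8}).

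The two facts I would record from \cite[\S\S5.1]{HoudayerVaes:Preprint12} are: \emph{(a)} if $M_1\supseteq N$ has no trivial corner and $f\in N^p$ satisfies $c_f^N=1$, then $fM_1f\supseteq fNf$ again has no trivial corner; and \emph{(b)} if $M_1\supseteq N$ has no trivial corner then so does the inclusion of continuous cores $\widetilde M_1\supseteq\widetilde N$. The crucial reduction is then: \emph{if $P\supseteq Q$ has no trivial corner and $Q$ is of type {\rm I}, then $P$ satisfies conditions {\rm(a)}, {\rm(b)} of Proposition~\ref{P-3.5} with respect to some faithful normal state $\varphi$ with $\sigma_t^\varphi(Q)=Q$}; this is the no--trivial--corner analogue of Lemma~\ref{L-4.7}~(3), and I would obtain it by combining the description of such inclusions in \cite[\S\S5.1]{HoudayerVaes:Preprint12} with the intertwining criterion Proposition~\ref{P-3.1} --- very much as in the proof of Corollary~\ref{C-4.4} --- to manufacture, for each $n\geq 2$, the required unitaries $u_k,v_k$ in the centralizer $P_\varphi$ subject to $E_1(u_{k_1}^*u_{k_2})=E_N^\varphi(v_{k_1}^*v_{k_2})=0$; here type {\rm I}-ness of $Q$ is what permits passing, after disintegration, to a situation already covered by Lemma~\ref{L-4.7}.

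Granting this, (3) follows by the template of Theorem~\ref{T-4.8}. Since $N$ is of type {\rm I} one picks an abelian $f\in N^p$ with $c_f^{\widetilde N}=1$, obtains $\big(\widetilde{fMf},\widetilde{E\!\upharpoonright_{fMf}}\big)=\big(\widetilde{fM_1f},\widetilde{E_1\!\upharpoonright_{fM_1f}}\big)\star_{\widetilde{fNf}}\big(\widetilde{fM_2f},\widetilde{E_2\!\upharpoonright_{fM_2f}}\big)$ from Lemma~\ref{L-4.6} and \eqref{Eq-2.5}, notes that $\widetilde{fM_1f}\supseteq\widetilde{fNf}$ still has no trivial corner by (a) and (b), so the reduction above applies and Proposition~\ref{P-3.5} can be invoked, and then uses the entire non--triviality of $M_2\supseteq N$ --- which descends to $fM_2f\supseteq fNf$ by Lemma~\ref{L-4.2}~(2) --- together with Lemma~\ref{L-4.2}~(3) and the free independence of the $\omega$--powers to reach $\big(\widetilde{fMf}\big)'\cap\big(\widetilde{fMf}\big)^\omega=\big(\widetilde{fMf}\big)'\cap\big(\widetilde{fNf}\big)^\omega=\big(\widetilde{fMf}\big)'\cap\mathcal{Z}(\widetilde N)^\omega f$ via \eqref{Eq-4.1}. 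The ``uncutting'' through the injective (because $c_f^{\widetilde N}=1$, hence $c_f^{\widetilde M}=1$) $*$--homomorphism $\pi_f$ is identical to the one in Theorem~\ref{T-4.8} and yields $\big(\widetilde M\big)'\cap\big(\widetilde M\big)^\omega=\big(\widetilde M\big)'\cap\mathcal{Z}(\widetilde N)^\omega$, which, being commutative, equals $(\widetilde M)_\omega$ by \cite[(8) in page 360]{Ueda:TAMS03}. Non--amenability of $M$ then follows exactly as in Theorem~\ref{T-4.8}: were $\widetilde M$ (equivalently $M$) amenable, it would contain a hyperfinite type II$_1$ corner, whose asymptotic centralizer is a non--abelian type II$_1$ factor, contradicting the abelianness of $(\widetilde M)_\omega$. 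Finally, (1) and (2) are obtained by the same reduction followed by the argument of Theorem~\ref{T-4.3}~(1),(4): as remarked before Theorem~\ref{T-4.8}, the very same Proposition~\ref{P-3.5}--based computation reproves the two center formulas.

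The main obstacle is the \emph{reduction} emphasized above, that is, converting the purely structural hypothesis ``$M_1\supseteq N$ has no trivial corner with $N$ of type {\rm I}'' into the concrete data demanded by Proposition~\ref{P-3.5} (a state $\varphi$ with $\sigma_t^\varphi(N)=N$ and, for every $n$, unitaries in $(M_1)_\varphi$ obeying the stated orthogonality relations). In Theorem~\ref{T-4.8} this was handed over by Lemma~\ref{L-4.7}~(3) under a far cleaner hypothesis; here it has to be extracted from \cite[\S\S5.1]{HoudayerVaes:Preprint12}, and the delicate point is checking that the no--trivial--corner property genuinely survives both the cut by $f$ and the passage to the continuous core while staying strong enough to feed Proposition~\ref{P-3.5}. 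Once that bookkeeping is settled no idea beyond Theorem~\ref{T-4.8} is needed --- which is precisely the assertion made in the paragraph preceding the statement.
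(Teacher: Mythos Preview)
Your overall architecture is right and matches the paper: reduce to (3), follow the proof of Theorem~\ref{T-4.8} after cutting by an abelian $f\in N^p$ with $c_f^{\widetilde N}=1$, and use the permanence of ``no trivial corner'' under corners and passage to cores (your facts (a),(b), which are \cite[Lemma~5.2, Proposition~5.5]{HoudayerVaes:Preprint12}) so that $\widetilde{fM_1f}\supseteq\widetilde{fNf}$ again has no trivial corner. The paper also notes that (3) $\Rightarrow$ (2) $\Rightarrow$ (1) are trivial implications, so your separate treatment of (1),(2) via the Theorem~\ref{T-4.3} computation is unnecessary extra work.

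The one place where your plan goes off the rails is the ``key reduction''. You propose to manufacture the unitaries required by Proposition~\ref{P-3.5} by invoking Proposition~\ref{P-3.1} ``as in Corollary~\ref{C-4.4}''. But Proposition~\ref{P-3.1} (and hence the argument of Corollary~\ref{C-4.4}) produces a \emph{net} $v_\lambda$ with $E_1(y^*v_\lambda x)\to 0$ asymptotically; that is the input of Theorem~\ref{T-4.3}, not of Proposition~\ref{P-3.5}. Proposition~\ref{P-3.5} demands, for each $n$, unitaries in a fixed centralizer with \emph{exact} orthogonality $E_1(u_{k_1}^*u_{k_2})=E_N^\varphi(v_{k_1}^*v_{k_2})=0$, and the intertwining criterion does not deliver that. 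The paper does not use Proposition~\ref{P-3.1} here at all. Instead it proves the exact analogue of Lemma~\ref{L-4.7}(1) with the hypothesis ``$P$ has no type I direct summand'' replaced by ``$P\supseteq Q$ has no trivial corner'': choose the $q_m$'s and $\chi$ as in Lemma~\ref{L-4.7}(1), and on each finite corner $q_mPq_m\supseteq Qq_m$ apply \cite[Lemma~5.4(3)]{HoudayerVaes:Preprint12} (valid without separability by Lemma~\ref{L-4.14}) to get a Haar unitary $u_m$ with $E_Q(u_m^k)=0$ for $k\neq 0$; then $u:=\sum_m u_m\in P_{\chi\circ E_Q}$ satisfies $E_Q(u^k)=0$ for $k\neq 0$. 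Since in the core situation $\varphi=\chi\circ\widetilde{E_1\!\upharpoonright}$, the $\varphi$-preserving expectation $E_N^\varphi$ coincides with $\widetilde{E_1\!\upharpoonright}$, so the single Haar unitary feeds both orthogonality conditions of Proposition~\ref{P-3.5}. Replace your appeal to Proposition~\ref{P-3.1}/disintegration by this direct construction and the rest of your outline goes through verbatim.
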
  
\begin{proof} It is trivial that (3) $\Rightarrow$ (2) $\Rightarrow$ (1), see e.g.~the proof of \cite[Theorem 5.2]{Ueda:JFA05} for (3) $\Rightarrow$ (2) and \cite[Theorem X.II.1.1]{Takesaki:Book2} for (2) $\Rightarrow$ (1). Thus it suffices to prove only (3). The line of the proof below is exactly identical to that of Theorem \ref{T-4.8}, and thus we keep the notations there. In fact, only one modification is sufficient. By \cite[Lemma 5.2, Proposition 5.5]{HoudayerVaes:Preprint12} the inclusion $\widetilde{fM_1 f} \supseteq \widetilde{fNf}$ also has no trivial corner. Then it suffices to prove the exactly same assertion as in Lemma \ref{L-4.7} (1) with replacing the assumption that $P$ has no type I direct summand by that $P \supseteq Q$ has no trivial corner. In fact, by using this new assertion instead of Lemma \ref{L-4.7} (3) one gets the same equation \eqref{Eq-4.3} and the rest of the proof there works well.   

Let $P \supseteq Q$ be an inclusion of von Neumann algebras without trivial corner. Assume that $Q$ is commutative, $P$ has a faithful normal semifinite trace $\mathrm{Tr}_P$ and there is a faithful normal conditional expectation $E_Q : P \rightarrow Q$ satisfying $\mathrm{Tr}_P \circ E_Q = \mathrm{Tr}_P$. As in the proof of Lemma \ref{L-4.7} (1) we choose the $q_m$'s and $\chi$. Then we apply \cite[Lemma 5.4 (3)]{HoudayerVaes:Preprint12} (note that it holds without assuming the separability of preduals, see Lemma \ref{L-4.14} below) with $q = p := q_m$ and get a unitary $u_m \in q_m P q_m$ satisfying that $E_Q(u_m^k) = 0$ as long as $k \neq 0$. Letting $u := \sum_{m \in \mathbb{N}} u_m$ we have $u \in P_{\chi\circ E_Q}$ and $E_Q(u^k) = 0$ as long as $k \neq 0$. Hence we are done. 
\end{proof}   

As remarked in \cite[Lemma 5.3]{HoudayerVaes:Preprint12} the next lemma immediately follows from Rohlin's general theorem on Lebesgue spaces under the separability of preduals. Thus only the advantage of the proof below is no use of disintegration; hence the separability of preduals is not necessary in  \cite[Lemma 5.4]{HoudayerVaes:Preprint12}. Although it is a rather minor point, we do give it for the sake of completeness. 

\begin{lemma}\label{L-4.14} Let $B \supseteq A$ be {\rm(}unital{\rm)} inclusion of commutative $\sigma$-finite von Neumann algebras with a faithful normal conditional expectation $E_A : B \rightarrow A$. If $Bf \neq Af$ for any nonzero projection $f \in B$, then there is a unitary $u \in B$ such that $E_A(u^k) = 0$ as long as $k \neq 0$. 
\end{lemma}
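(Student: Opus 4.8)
The plan is to reduce the whole statement to the construction of a single ``uniformly distributed'' element of $B$ over $A$. Concretely, I would aim to produce an increasing family of projections $(p_s)_{0\le s\le 1}$ in $B$ with $p_0=0$, $p_1=1$ and $E_A(p_s)=s\cdot 1$ for all $s\in[0,1]$. Granting this, set $\theta:=\int_0^1 s\,dp_s\in B$, a positive contraction whose spectral resolution is $(p_s)$, and $u:=\exp(2\pi i\theta)\in B^u$. For $k\neq 0$ one has $u^k=\int_0^1 e^{2\pi iks}\,dp_s$, which is approximated in operator norm by the Riemann sums $\sum_{j=0}^{2^n-1}e^{2\pi ikj/2^n}(p_{(j+1)/2^n}-p_{j/2^n})$, the error being controlled by the modulus of continuity of $t\mapsto e^{2\pi ikt}$ on $[0,1]$. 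Applying the contraction $E_A$ (and using that it is an $A$--$A$-bimodule map) to these sums gives
\[
E_A(u^k)=\lim_{n\to\infty}\sum_{j=0}^{2^n-1}e^{2\pi ikj/2^n}\bigl(E_A(p_{(j+1)/2^n})-E_A(p_{j/2^n})\bigr)=\lim_{n\to\infty}\frac1{2^n}\sum_{j=0}^{2^n-1}e^{2\pi ikj/2^n}=0 .
\]
So everything comes down to the existence of the family $(p_s)$.

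To build $(p_s)$ I would first isolate a \emph{halving property}: for every nonzero projection $q\in B$ and every $a\in A$ with $0\le a\le E_A(q)$ there is a projection $q'\le q$ in $B$ with $E_A(q')=a$. Applying this with $a=\tfrac12E_A(q)$ repeatedly and coherently (first to $q=1$, then to each half, etc.) yields projections $p_{j/2^n}$ for $0\le j\le 2^n$, increasing in $j$ and compatible between consecutive dyadic levels, with $E_A(p_{j/2^n})=(j/2^n)\cdot1$; for general $s$ one sets $p_s:=\bigvee\{p_{j/2^n}:j/2^n\le s\}$, and normality and faithfulness of $E_A$ then give $E_A(p_s)=s\cdot1$ together with the required monotonicity and one-sided continuity.

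The heart of the matter, and the only place where the hypothesis ``$Bf\neq Af$ for every nonzero $f\in B^p$'' is used, is the halving property; this is also the step that substitutes for the disintegration/Rohlin argument of Houdayer--Vaes. I would prove it by a maximality argument: fix $q$ and $a$, and by Zorn's lemma choose a maximal projection $r_0\le q$ in $B$ with $E_A(r_0)\le a$ (chains are handled by normality of $E_A$). If $b:=a-E_A(r_0)\gneqq 0$ one derives a contradiction by exhibiting a nonzero projection $r\le q-r_0$ with $E_A(r)\le b$, since then $r_0+r$ violates maximality. Two ingredients are needed. First, for any nonzero projection $e\in B$: because $Ae$ is a von Neumann subalgebra of the abelian algebra $Be$ and $Be\neq Ae$ by hypothesis, some spectral projection $r$ of a selfadjoint element of $Be$ lies in $Be\setminus Ae$, so $0\neq r\neq e$; letting $p_+\in A$ be the spectral projection of $E_A(r)-\tfrac12E_A(e)$ for $(0,\infty)$ and putting $e':=(1-p_+)r+p_+(e-r)$ (a subprojection of $e$, as $p_+\in A\subseteq B$ commutes with $e$ and $r$), a one-line computation gives $E_A(e')\le\tfrac12E_A(e)$, and $e'\neq 0$ because $E_A(e')=0$ would force $r=p_+e\in Ae$. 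Second, localization: choose $\delta>0$ and a nonzero projection $p\in A$ with $\delta p\le b$; using $E_A(q-r_0)\ge b$ one checks $(q-r_0)p\neq 0$, and iterating the first ingredient $n$ times starting from $(q-r_0)p$ produces a nonzero $r\le(q-r_0)p\le q-r_0$ with $E_A(r)\le 2^{-n}E_A((q-r_0)p)\le 2^{-n}p$; for $n$ with $2^{-n}\le\delta$ this gives $E_A(r)\le\delta p\le b$, as wanted.

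The main obstacle is precisely this last extraction of arbitrarily ``$E_A$-small'' subprojections out of a corner that is atomless relative to $A$. The two points I expect to need care with are the spectral-projection substitute for the ``pointwise smaller half'' of a splitting, and the nonvanishing of the resulting piece $e'$ — which is exactly what the defining property $r\notin Ae$ of the corner is designed to supply. Once these are in place the argument is entirely $W^*$-algebraic, with no recourse to disintegration, so separability of the preduals is not required; the remaining verifications (normality of $E_A$ on increasing nets, faithfulness, the Riemann-sum estimate, and the coherence of the dyadic $p_{j/2^n}$'s) are routine.
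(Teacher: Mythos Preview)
Your argument is correct and, like the paper's, avoids disintegration so that separability is not needed. The overall architecture is the same as the paper's: manufacture a dyadic family of projections $e_{(\varepsilon_1,\dots,\varepsilon_n)}$ with $E_A(e_{(\varepsilon_1,\dots,\varepsilon_n)})=2^{-n}$, and read off a Haar unitary. The difference lies in the two technical ends. For the exact halving $E_A(e_0)=\tfrac12E_A(e)$, the paper argues via Krein--Milman: it first observes that every nonzero corner $Bf$ contains a nonzero self-adjoint element of $E_A$-expectation zero, and then shows that an extreme point of $\{x=x^*\in Be:\Vert x\Vert\le1,\,E_A(x)=0\}$ must be of the form $2e_0-e$; you instead prove the stronger ``hit any $0\le a\le E_A(q)$'' property by Zorn's lemma, feeding the maximality with your elegant spectral-swap $e'=(1-p_+)r+p_+(e-r)$ that yields $E_A(e')\le\tfrac12E_A(e)$ with $e'\neq0$. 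For the endgame, the paper packages the dyadic tree as an $E_A$-preserving embedding of the Bernoulli product $(C,\omega)=\bigotimes(\mathbb{C}^2,\tfrac12,\tfrac12)$ and then invokes $(C,\omega)\cong(L(\mathbb{Z}),\tau_{\mathbb{Z}})$, whereas you build $u=e^{2\pi i\theta}$ directly from the spectral family $(p_s)$ and a Riemann-sum computation. Your route is more elementary and gives the slightly stronger intermediate statement (arbitrary $a$, not just $\tfrac12E_A(q)$); the paper's Krein--Milman step is shorter once one accepts that machinery, and its final identification with $L(\mathbb{Z})$ makes the source of the Haar unitary conceptually transparent.
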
 
\begin{proof} Choose non-zero $f \in B^p$. Since $Bf \neq Af$, there is $x \in B$ such that $x \not\in Af$ and $0 \leq x \leq f$. Since $E_A(x) \leq E_A(f)$, one can choose a positive contraction $c \in A$ so that $c E_A(f) = E_A(x)$ (since $A$ is commutative). Letting $y := x - cf \in Bf$ we have $y = y^* \neq 0$ (due to $x \not\in Af$) and $E_A(y) = 0$. Therefore, an idea given in the proof of \cite[Lemma 2.1]{CameronFangMukherjee:Preprint11} enables us to construct projections $e_{(\varepsilon_1,\dots,\varepsilon_n)} \in B$, $n \in \mathbb{N}$, $\varepsilon_k \in \{1,2\}$, in such a way that $e_{(\varepsilon_1,\dots,\varepsilon_n)} = e_{(\varepsilon_1,\dots,\varepsilon_n,1)}+e_{(\varepsilon_1,\dots,\varepsilon_n,2)}$ and $E_A(e_{(\varepsilon_1,\dots,\varepsilon_n)}) = \frac{1}{2^n}1$. The proof is done by induction. Assume that we have chosen up to $n$-th stage. Set $\Lambda_e := \{ x = x^* \in Be\,|\,\Vert x \Vert_\infty \leq 1, E_A(x) = 0\}$ with $e := e_{(\varepsilon_1,\dots,\varepsilon_n)}$. It is a $\sigma$-weakly compact convex subset, and thus has sufficiently many extremal points due to the Krein--Milman theorem. Let $a \in \Lambda_e$ be an extremal point. Then it suffices to prove $a = 2e_0 - e$ for some $e_0 \in B^p$ with $e_0 \leq e$, since it clearly implies that $E_A(e_0) = \frac{1}{2}E_A(e)$. On contrary, suppose that it is not the case. By the spectral decomposition of $a$ one can find $\delta>0$ and non-zero $f \in B^p$ in such a way that $f \leq e$ and $-(1-\delta)f \leq af \leq (1-\delta)f$. By what we have shown above, there is a non-zero $y = y^* \in Bf$ such that $-\delta f \leq y \leq \delta f$ and $E_A(y) = 0$, and hence $a+y, a-y \in \Lambda_e$ and $a = \frac{1}{2}(a+y)+\frac{1}{2}(a-y)$, a contradiction. Thus $e_{(\varepsilon_1,\dots,\varepsilon_n,1)} := e_0$ and $e_{(\varepsilon_1,\dots,\varepsilon_n,2)} := e-e_0$ become desired ones in $(n+1)$-th stage. Hence we have proved the claim. Let $(C,\omega)$ be the von Neumann algebraic infinite tensor product of $\mathbb{C}\oplus\mathbb{C}$ with equal weights $\{1/2,1/2\}$. Once passing GNS representations one can construct an injective normal $*$-homomorphism from $C\bar{\otimes}A$ into $B$ which intertwines $\omega\bar{\otimes}\mathrm{id}_A$ and $E_A$. Hence the desired assertion follows, since $(C,\omega) \cong (L(\mathbb{Z}),\tau_{\mathbb{Z}})$ thanks to \cite[Theorem III.1.22]{Takesaki:Book1}.    
\end{proof} 

The entire non-triviality of an inclusion $P \supseteq Q$ of von Neumann algebras is nothing but just the non-triviality of $P$ when $Q = \mathbb{C}1$, and hence Theorem \ref{T-4.13} is no longer true under assuming only that $M_1 \supseteq N$ is entirely non-trivial instead. In fact, the plain free product of two 2-dimensional algebras with suitable states provides a counter example, see \cite{Ueda:AdvMath11} for suitable references therein. Finally we conjecture that Corollary 4.4, especially a strong kind of irreducibility $((M_1)_\varphi)' \cap M \subseteq M_1$ for some faithful normal state $\varphi$, should also hold under the same set of assumptions of Theorem \ref{T-4.13}. This is rather technical, but such a property may have some potential in further analysis. We will consider it in future work beyond the case where $\mathcal{Z}(M) = \mathcal{Z}(M_1)\cap\mathcal{Z}(M_2)\cap\mathcal{Z}(N)$ need not hold.       

\section*{Acknowledgment} I thank Professors Cyril Houdayer and Stefaan Vaes for several fruitful conversations in Dec.~2011 and in Jan.~2012 and also for sending us a draft of \cite{HoudayerVaes:Preprint12} prior to putting it on the ArXiv.  

}


\begin{thebibliography}{99}

\bibitem{Asher:PAMS09} J.~Asher, 
A Kurosh-type theorem for type III factors, 
{\it Proc.~Amer.~Math.~Soc.}, {\bf 137} (2009), 4109--4116.

\bibitem{BrownOzawa:Book} N.P.~Brown and N.~Ozawa, 
{\it C$^*$-algebras and finite-dimensional approximations. Graduate Studies in Mathematics}, {\bf 88}, American Mathematical Society, Providence, RI, 2008. 

\bibitem{CameronFangMukherjee:Preprint11} J.~Cameron, J.~Fang and K.~Mukherjee, 
Mixing subalgebras of finite von Neumann algebras, Preprint, ArXiv:1001.0169. 

\bibitem{ChifanHoudayer:DukeMathJ10} I.~Chifan and C.~Houdayer, 
Bass--Serre rigidity results in von Neumann algebras, 
{\it Duke Math.~J.}, {\bf 153} (2010), 23--54.

\bibitem{Connes:JFA74} A.~Connes, 
Almost periodic states and factors of type III$_1$, {\it J.~Funct.~Anal.}, {\bf 16} (1974), 415--445.

\bibitem{Dykema:DukeMathJ93} K.~Dykema, 
Free products of hyperfinite von Neumann algebras and free dimension, {\it Duke Math.~J.}, {\bf 69} (1993), 97--119. 

\bibitem{Houdayer:Crelle09} C.~Houdayer, 
Construction of type II$_1$ factors with prescribed countable fundamental group, 
{\it J.~reine angew.~Math.} {\bf 634} (2009), 169--207.

\bibitem{HoudayerVaes:Preprint12} C.~Houdayer and S.~Vaes, Type III factors with unique Cartan decomposition, Preprint, 2012. 

\bibitem{IoanaPetersonPopa:Acta08} A.~Ioana, J.~Peterson and S.~Popa, 
Amalgamated free products of weakly rigid factors and calculation of their symmetry groups,  
{\it Acta Math.}, {\bf 200} (2008), 85--153. 

\bibitem{IzumiLongoPopa:JFA98} M.~Izumi, R.~Longo and S.~Popa, 
A Galois correspondence for compact groups of automorphisms of von Neumann algebras with a generalization to Kac algebras, 
{\it J.~Funct.~Anal.}, {\bf 155} (1998), 25--63.

\bibitem{Kadison:AmerJMath84} R.V.~Kadison, 
Diagonalizing matrices, 
{\it Amer.~J.~Math.}, {\bf 106} (1984), 1451--1468. 

\bibitem{KadisonRingrose:Book2} R.V.~Kadison and J.~Ringrose, {\it Fundamentals of the theory of operator algebras, Vol. II, Advanced theory}, Graduate Studies in Mathematics, {\bf 16}, American Mathematical Society, Providence, RI, 1997. 

\bibitem{Kosaki:JFA86} H.~Kosaki, 
Extension of Jones' theory on index to arbitrary factors,  
{\it J.~Funct.~Anal.}, {\bf 66} (1986), 123--140. 

\bibitem{Longo:CMP89} R.~Longo, 
Index of subfactors and statistics of quantum fields, I, 
{\it Commun.~Math.~Phys.}, {\bf 126} (1989), 217--247. 

\bibitem{Popa:InventMath93} S.~Popa, 
Markov traces on universal Jones algebras and subfactors of finite index, 
{\it Invent.~math.}, {\bf 111} (1993), 375--405. 

\bibitem{Popa:AnnMath06} S.~Popa, 
On a class of type II$_1$ factors with Betti numbers invariants, 
{\it Ann. of Math.}~(2), {\bf 163} (2006), 809--899. 

\bibitem{Popa:InventMath06} S.~Popa, 
Strong rigidity of II$_1$ factors arising from malleable actions of w-rigid groups, I, 
{\it Invent.~math.}, {\bf 165} (2006), 369--408.

\bibitem{Popa:IMRN07} S.~Popa, 
On Ozawa's property for free group factors,  
{\it Int.~Math.~Res.~Not.}, IMRN 2007, Art. ID rnm036. 

\bibitem{Takesaki:Book1} M.~Takesaki, 
{\it Theory of Operator Algebras, I}, Encyclopedia of Mathematical Sciences, 124, Operator Algebras and Non-commutative Geometry, 5, Springer, Berlin, 2002.

\bibitem{Takesaki:Book2} M.~Takesaki, 
{\it Theory of Operator Algebras, II}, Encyclopedia of Mathematical Sciences, 125, Operator Algebras and Non-commutative Geometry, 6, Springer, Berlin, 2003.

\bibitem{Ueda:PacificJMath99} Y.~Ueda, 
Amalgamated free product over Cartan subalgebra, 
Pacific J. Math., {\bf 191} (1999), 359--392. 

\bibitem{Ueda:MathScand01} Y.~Ueda, 
Remarks on free products with respect to non-tracial states, 
{\it Math.~Scand.}, {\bf 88} (2001), 111--125.  

\bibitem{Ueda:TAMS03} Y.~Ueda, 
Fullness, Connes' $\chi$-groups, and ultra-products of amalgamated free products over Cartan subalgebras, 
{\it Trans.~Amer.~Math.~Soc.}, {\bf 355} (2003), 349--371. 

\bibitem{Ueda:ASPM04} Y.~Ueda, 
Amalgamated free product over Cartan subalgebra, II:  Supplementary Results \& Examples,  
{\it Advanced Studies in Pure Mathematics}, {\bf 38} (2004), 239-265.

\bibitem{Ueda:JFA05} Y.~Ueda, 
HNN extensions of von Neumann algebras, 
{\it J.~Funct.~Anal.}, {\bf 225} (2005), 383--426. 

\bibitem{Ueda:IllinoisJMath08} Y.~Ueda, 
Remarks on HNN extensions in operator algebras, 
{\it Illinois J. Math.}, {\bf 52} (2008), 705--725.

\bibitem{Ueda:AdvMath11} Y.~Ueda, 
Factoriality, type classification and fullness for free product von Neumann algebras,  
{\it Adv. Math.}, {\bf 228} (2011), 2647-2671.

\bibitem{Ueda:MRL} Y.~Ueda, On type III$_1$ factors arising as free products, 
{\it Math.~Res.~Lett.}, {\bf 18} (2011), 909--920.  

\bibitem{Vaes:Asterisque07} S.~Vaes, 
Rigidity results for Bernoulli actions and their von Neumann algebras (after Sorin Popa), S\'{e}minaire Bourbaki, Vol. 2005/2006, {\it Ast\'{e}risque}, {\bf 311} (2007), Exp.~No.~961, viii, 237--294.  

\bibitem{Voeden:PLMS73} B.~J.~Voeden, 
Normalcy in von Neumann algebras, {\it Proc.~London Math.~Soc.} (3), {\bf 27} (1973), 88--100.

\bibitem{Voiculescu:LNM1132} D.~Voiculescu, 
Symmetries of some reduced free product $C^*$-algebras, in {\it Operator algberas and their connections with topology and ergodic theory}, Lect.~Notes in Math., {\bf 1132} (1985), 566--588. 

\end{thebibliography}
\end{document}